\documentclass[11pt]{article}

\usepackage{graphicx}
\usepackage{url}
\usepackage{amsmath}
\usepackage{amsfonts}
\usepackage{verbatim}
\usepackage{amssymb}
\usepackage{amsthm}
\usepackage{color}
\usepackage{float}

\setlength{\textheight}{9 in} \setlength{\textwidth}{6.5 in}
\voffset = -25mm \hoffset = -20 mm \pagestyle{plain}

\newtheorem{thm}{Theorem}
\newtheorem{defn}[thm]{Definition}
\newtheorem{prop}[thm]{Proposition}
\newtheorem{cor}[thm]{Corollary}
\newtheorem{lem}[thm]{Lemma}
\newtheorem{remark}[thm]{Remark}

\def\mcP{{\mathcal{P}}}

\def\g{{\gamma}}
\def\a{{\alpha}}
\def\be{{\beta}}

\newcommand{\bit}{\begin{itemize}}
\newcommand{\eit}{\end{itemize}}
\newcommand{\ben}{\begin{enumerate}}
\newcommand{\een}{\end{enumerate}}
\newcommand{\beq}{\begin{equation}}
\newcommand{\eeq}{\end{equation}}
\newcommand{\bea}{\begin{eqnarray*}}
\newcommand{\eea}{\end{eqnarray*}}
\newcommand{\bpf}{\begin{proof}}
\newcommand{\epf}{\end{proof}\ms}
\newcommand{\ms}{\medskip}

\begin{document}
\title{Limit and Morse Sets for Deterministic Hybrid Systems}

\author{
Kimberly Ayers \thanks{Department of Mathematics, Iowa State University, Ames, IA 50014, USA ($\mbox{kdayers@iastate.edu}$). Research supported by DMS 0502354.}
 \and Xavier Garcia\thanks{Department of Mathematics, University of Minnesota - Twin Cities, Minneapolis, MN 55455, USA
($\mbox{garci363@umn.edu}$). Research supported by DMS 0750986 and DMS 0502354.} \and Jennifer Kunze\thanks{
Department of Mathematics, Saint Mary's College of Maryland, St Marys City, MD 20686, USA
($\mbox{jckunze@smcm.edu}$). Research supported by DMS 0750986. } \and Thomas Rudelius\thanks{
Department of Mathematics, Cornell University, Ithaca, NY 14850, USA
(twr27@cornell.edu). Research supported by DMS 0750986. } \and  Anthony Sanchez\thanks{%
Department of Mathematics, Arizona State University, Tempe, AZ 85281, USA
($\mbox{Anthony.Sanchez.1@asu.edu}$). Research supported by DMS 0750986 and DMS 0502354. } \and  Sijing Shao\thanks{
Department of Mathematics, Iowa State University, Ames, IA 50014, USA
(sshao@iastate.edu). Research supported by Iowa State University. } \and Emily Speranza\thanks{
Department of Mathematics, Carroll College, Helena, MT 59625, USA
($\mbox{esperanza@carroll.edu}$). Research supported by DMS 0750986. }
}

\date{}
\maketitle


\section{Introduction}

The theory of dynamical systems is used in many fields of study to model the evolution of systems over time.  However, most objects of real-world interest are too complicated to be modeled exactly by a dynamical system.  The random fluctuations that occur in nature are conventionally disregarded when analysis is performed, a practice that may be dubious in some situations.

In \cite{ourpaper}, we have considered hybrid systems that consist of a set of continuous time dynamical systems over a compact space, where the dynamical system acting at each time is determined by Markov chain.  We have taken $\{Z_n\}$ to be a randomly determined sequence of Markov states in some finite state space, and we have used this construction to find invariant probability measures.

In this paper, we change gears and instead treat $\{Z_n\}$ as a preselected, deterministic sequence of states.  This approach to Markov chains has been studied in some detail in \cite{discretesystems}.  We will adapt the results to our continuous time hybrid system, and we will see that under this new formulation, the hybrid system can be treated as a dynamical system rather than a stochastic one.  We begin by trying to understand the space on which this dynamical system lives, showing that it is compact.  As a result, we can discuss limit sets and Morse decompositions of the hybrid dynamical system, but we find that the limiting behavior of the system can be highly irregular.  We explore three simple examples, illustrating the variety that is possible in the limiting behavior of hybrid dynamical systems.


\section{Directed Graphs and Dynamical Systems on Graphs}

The majority of the definitions, theorems, and notation in this section were borrowed from \cite{discretesystems}. The proof of all of these theorems can be found there as well. We introduce basic definitions for directed graphs associated with symbolic dynamical systems that will be used in the remainder of the paper.

\subsection{Directed Graphs}

\begin{defn}\label{def 1} A finite directed graph $G=(V,E)$ is a pair of sets $V=\{1,...,n\}$ called vertices, and $E\subseteq V \times V$, called edges.
\end{defn}

\begin{defn}\label{def 3} Any graph $G=(V,E)$ has a set $\mcP = \{(x_1,...,x_k),|(x_i,x_{i+1})\in E \ ; \ i,k \in \mathbb{N}\}$. Any element of $\mcP$ is called an admissible path of G.
\end{defn}

\begin{defn}\label{def 6}The out-degree of any $\a\in V$, denoted  $o(\a)$, is the number of $\g\in\mcP$ with length 1 and $\g_1=\a$.
\end{defn}

\begin{defn}\label{def 7}The in-degree of any $\a\in V$, denoted  $i(\a)$, is the number of $\g\in\mcP$ with length 1 and $\g_F=\a$.
\end{defn}

\begin{defn}\label{defNgraph} A finite directed graph $G=(V,E)$ with $o(\a)\geq1, i(\a)\geq1$; for all $\a\in V$ is called an $N$-graph.
\end{defn}

\begin{defn}\label{def 13} A communicating class in an $N$-graph $G=(V,E)$ is a subset $C\subseteq V$ for which two things are true:

        \begin{enumerate}
            \item For all $\a,\be\in C$ there exists $\g\in\mcP$ such that $\g_1=\a$ and $\g_F=\be$.

            \item There exists no $C'\supset C$ where for all $\a',\be'\in C'$ there exists $\g'\in\mcP$ such that $\g'_1=\a'$ and $\g'_F=\be'$. This condition is called maximality.
        \end{enumerate}
\end{defn}

\begin{defn} Communicating classes can be classified further in two ways:

        \begin{enumerate}
            \item A communicating class $C$ is variant if there exists $\g\in\mcP$ with $\g_1\in C$ and $\g_F\not\in C$.
            \item A communicating class $C$ is invariant if for all $\g\in\mcP$ with $\g_1\in C$, $\g_F\in C$.
        \end{enumerate}
\end{defn}

\begin{remark}
Note that the empty set is not a communicating class because the empty set does not meet the requirements for maximality.
\end{remark}

\begin{thm}\label{thm 3}Every $N$-graph contains an invariant communicating class.
\end{thm}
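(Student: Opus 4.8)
The plan is to construct an invariant communicating class directly by following paths as far as they can go in the graph and using finiteness to force a terminal behavior. First I would define a partial order on the communicating classes of $G$: write $C \preceq C'$ if there is an admissible path starting in $C$ and ending in $C'$. (Since every vertex lies in a unique communicating class, and since a path from $C$ to $C'$ and a path from $C'$ to $C$ together with maximality would force $C = C'$, this relation is well-defined and antisymmetric; transitivity is immediate by concatenating admissible paths.) A communicating class $C$ is invariant precisely when it is maximal with respect to $\preceq$, since $C$ being variant means there is a path leaving $C$, i.e. some $C' \succ C$ with $C' \neq C$.

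Next I would show such a maximal class exists. The set of communicating classes is finite and nonempty (every vertex is in one, and $V$ is finite and nonempty by Definition~\ref{def 1}; here I am using that an $N$-graph has $o(\a) \geq 1$ for every vertex, so $\mcP$ contains paths of every length and the classes are genuinely defined). A finite nonempty poset always has a maximal element, so pick $C$ maximal under $\preceq$. It remains to check that a maximal $C$ is invariant. Suppose not: then there is $\g \in \mcP$ with $\g_1 \in C$ and $\g_F \notin C$. Let $C'$ be the communicating class containing $\g_F$. Then $C \preceq C'$ and $C \neq C'$, contradicting maximality of $C$ — unless $C \preceq C'$ fails to be strict, but $C' = C$ is exactly what we assumed false. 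Hence $C$ is invariant.

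The one subtlety I want to be careful about — and the step I expect to be the main obstacle — is verifying antisymmetry of $\preceq$, i.e. that the relation really does descend to a partial order on classes rather than just a preorder with nontrivial cycles. The point is this: if there is a path from $\a \in C$ to $\be \in C'$ and a path from $\be$ to $\a$, then every vertex of $C$ communicates with every vertex of $C'$ (concatenate through $\a$ and $\be$), so $C \cup C'$ satisfies condition (1) of Definition~\ref{def 13}; by the maximality clause (2) applied to $C$, this forces $C' \subseteq C$, and symmetrically $C \subseteq C'$, so $C = C'$. I would also need the basic fact that the communicating classes partition $V$ — reflexivity of "communicates with" requires a length-$0$ or length-$\geq 1$ path from each vertex to itself, which follows from $o(\a) \geq 1$ in an $N$-graph (one can leave and, by a standard pigeonhole argument on the finite vertex set together with $i(\a)\geq 1$, return), so every vertex lies in some class. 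Once the partition and the partial order are in hand, the existence argument above is routine.
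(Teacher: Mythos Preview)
The paper does not give its own proof of this theorem; the opening of Section~2 defers all proofs in that section to \cite{discretesystems}, so there is no in-paper argument to compare against. Your approach --- form the poset of communicating classes under the reachability relation $\preceq$ and pick a maximal element --- is the standard condensation argument and is correct.

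One small point deserves care. Your fallback justification for reflexivity (``one can leave and, by a standard pigeonhole argument together with $i(\a)\geq 1$, return'') does not actually work: in the $N$-graph with $V=\{1,2,3\}$ and $E=\{(1,1),(1,2),(2,3),(3,3)\}$, vertex $2$ has in- and out-degree $1$ but no positive-length cycle through it. This matters because you use the partition not only for nonemptiness of the poset but also in the last step, when you let $C'$ be ``the communicating class containing $\g_F$.'' Fortunately the issue evaporates under the paper's Definition~\ref{def 3}, which as written admits single-vertex paths ($k=1$ with a vacuous edge condition), so every vertex lies in a communicating class outright and your length-$0$ option already suffices. If one instead insists on paths of positive length, the easy repair is: from $\g_F$ follow out-edges until pigeonhole produces a cycle, hence a class $C''$ with $C\preceq C''$; if $C''=C$ then $\g_F$ communicates both ways with $C$ and maximality of the class $C$ forces $\g_F\in C$, a contradiction, so $C''\neq C$ contradicts $\preceq$-maximality of $C$.
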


\subsection{Dynamical Systems}
\begin{defn}
 A dynamical system (d.s.) on a metric space $X$ is given by a map $\Phi : \, \mathbb{T} \times X \rightarrow X $ that satisfies $\Phi(0,x)=x$ and $\Phi(t+s,x)=\Phi(t,\Phi(s,x))$ for all $x\in X$ and all $t,s\in \mathbb{T}$.  $\Phi$ can be expressed by two different but equivalent notations for $x, \,  x' \in X$ and $t\in \mathbb{T}$: $$\Phi(t,x)=  x' \, or\, \Phi_t(x)=  x' $$ {\rm \cite{Kliemann}}.
\end{defn}

\begin{defn}
A d.s. is 1-sided when $\mathbb{T}=\mathbb{N}$ or $\mathbb{T}=\mathbb{R} ^+$.
A d.s. is 2-sided when $\mathbb{T}=\mathbb{Z}$ or $\mathbb{T}=\mathbb{R}$.
\end{defn}

\begin{lem}
Any 2-sided d.s. with mapping $\Phi_t$ has an inverse mapping $\Phi_{-t}$ where $$\Phi_t \circ \Phi_{-t}(x)=\Phi_{-t} \circ \Phi_{t}(x)=Id(x)=x$$
\end{lem}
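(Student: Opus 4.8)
The plan is to use the defining algebraic identities of a dynamical system together with the fact that for a 2-sided system the time set $\mathbb{T}$ is an additive group. First I would observe that since $\mathbb{T}=\mathbb{Z}$ or $\mathbb{T}=\mathbb{R}$, for every $t\in\mathbb{T}$ we also have $-t\in\mathbb{T}$, so the map $\Phi_{-t}:X\to X$ is well defined. This is the only place where 2-sidedness is genuinely used, and it is exactly the property that fails for a 1-sided system, where $\mathbb{T}=\mathbb{N}$ or $\mathbb{T}=\mathbb{R}^+$ and $-t$ need not lie in the domain.

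Next I would apply the composition law $\Phi(t+s,x)=\Phi(t,\Phi(s,x))$ with $s=-t$, which gives $\Phi_t(\Phi_{-t}(x))=\Phi(t+(-t),x)=\Phi(0,x)=x$, using the identity axiom $\Phi(0,x)=x$ in the last step. Symmetrically, applying the composition law with the roles of $t$ and $-t$ exchanged yields $\Phi_{-t}(\Phi_t(x))=\Phi(-t+t,x)=\Phi(0,x)=x$. Since $x\in X$ is arbitrary, this shows $\Phi_{-t}\circ\Phi_t=\Phi_t\circ\Phi_{-t}=\mathrm{Id}_X$, so $\Phi_t$ is a bijection of $X$ with two-sided inverse $\Phi_{-t}$, which is the claim.

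I expect no real obstacle here: the statement amounts to the observation that $t\mapsto\Phi_t$ is a homomorphism from the group $(\mathbb{T},+)$ into the group of bijections of $X$ under composition, and the asserted identity is simply the image of the relation $t+(-t)=0$. The only point worth stating carefully in the write-up is the domain check $-t\in\mathbb{T}$, both because it is where the hypothesis is consumed and because it makes the contrast with the 1-sided case explicit.
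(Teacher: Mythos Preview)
Your argument is correct and is exactly the standard one: use that $-t\in\mathbb{T}$ for a 2-sided system, then apply the flow identity $\Phi(t+s,x)=\Phi(t,\Phi(s,x))$ with $s=-t$ together with $\Phi(0,x)=x$. The paper does not actually supply a proof of this lemma (it is stated without proof, evidently regarded as immediate from the definition), so there is nothing to compare beyond noting that your write-up spells out precisely the computation the paper leaves implicit.
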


\begin{defn}\label{omegalimit1} 
The $\omega$-limit set of an element $x\in X$ is
$$\omega (x) = \{ y\in X | (\exists \, t_k \rightarrow\infty, k \in \mathbb{N})(\Phi (t_k,x)\rightarrow y)\}.$$
The $\alpha$-limit set of an element $x\in X$ is
$$\alpha (x) = \{ y \in X | (\exists \, t_k \rightarrow -\infty, k \in \mathbb{N})(\Phi (t_k,x)\rightarrow y)\}.$$
\end{defn}

\begin{defn}
The $\omega$-limit set of a subset $Y \subseteq X$ is given by
$$
\omega(Y) = \{ y \in X | (\exists \, t_k \rightarrow\infty, y_k \in Y, k \in \mathbb{N})(\Phi (t_k,y_k)\rightarrow y)\}.
$$
The $\alpha$-limit set of a subset $Y \subseteq X$ is given by
$$
\alpha(Y) = \{ y \in X | (\exists \, t_k \rightarrow -\infty, y_k \in Y, k \in \mathbb{N})(\Phi (t_k,y_k)\rightarrow y)\}.
$$\end{defn}

\subsection{Shift Spaces}
 
\begin{defn}  Given an $N$-graph $G=(V,E)$, the bi-infinite product space $\Upsilon$ of the set $V=\{1,...,n\}$ is the set of all bi-infinite sequences $x=(...x_{-1},x_{0},x_1,...)$ where $x_i\in V$ for all $i\in \mathbb{Z}$. \end{defn}

\begin{defn} Given an $N$-graph $G=(V,E)$ with $A\subset V$ and $\a\in V$, we define:
    \begin{itemize}
        \item $\Omega=\{ \ (...,x_{-2},x_{-1},x_0,x_1,x_2,...) \ | \ (x_i,x_{i+1})\in E \ \}$ to be the shift space of $G$.
        \item $\Omega_A=\{ \ x\in\Omega \ | \ x_i\in A $ for all $ i\in\mathbb{Z} \ \}$ to be the lift of $A$.
    \end{itemize}
\end{defn}

\noindent The flow on this dynamical system is determined by the left shift mapping $\Phi$, defined in {\rm \cite{discretesystems}.  It is in this paper that a metric is defined, and it is shown that this shift operator is continuous, and that $\Omega$ is compact.








\section{Generalization of the Shift Space to Continuous Time}

\subsection{$\bar \Delta$ and $\Delta$}
Note that in {\rm \cite{discretesystems}}, the flow on $\Omega$ is a discrete time dynamical system.  However, to insert this behavior into another dynamical systems to create a hybrid system, it requires that we extend this system to a continuous time dynamical system.  The obvious extension of a sequence into a function on $\mathbb{R}$ is a piecewise constant function.

\begin{defn}\label{lambda}
Let 
\begin{center}
$\bar{\Lambda} =  \left\{ x : \mathbb{R} \rightarrow S \left|
\begin{array}{cc}
\{x(ih)\}_{i \in \mathbb{Z}} \in \Upsilon \\
x$ is piecewise constant on $[nh,(n+1)h) \, \forall \, n \in \mathbb{Z}
\end{array}  \right\} \right .$
\end{center}
and
$$\Lambda = \{ x(\cdot + t) | x \in \Lambda, t \in \mathbb{R}\} .$$
\end{defn}

\begin{defn}\label{delta}
Let 
\begin{center}
$\bar{\Delta} =  \left\{ x : \mathbb{R} \rightarrow S \left|
\begin{array}{cc}
\{x(ih)\}_{i \in \mathbb{Z}} \in \Omega \\
x$ is piecewise constant on $[nh,(n+1)h) \, \forall \, n \in \mathbb{Z}
\end{array}  \right\} \right.$
\end{center}
and 
$$\Delta = \{ x(\cdot + t) | x \in \Delta, t \in \mathbb{R}\} .$$
\end{defn}

\noindent
In other words, $\Lambda$ and $\Delta$ are the sets of functions that result from translating the functions in $\bar\Lambda$ and $\bar\Delta$, respectively, by some $t \in \mathbb{R}$.  We allow for all horizontal translations of functions in $\bar\Lambda$ and $\bar\Delta$ in order for the spaces to be closed under shifts by $t$ for all $t\in \mathbb{R}$.

The next definition adapts the shift operator to continuous time by taking functions in $\bar \Delta$ as a generalization of bi-infinite sequences in $\Omega$.

\begin{defn}\label{psi}
Let 

\begin{center}
$
\begin{array}{lr}
 \psi: \mathbb{R} \times \bar \Delta  \rightarrow \bar \Delta \\
 \,\,\,\,\,\,\,\,(t, x(\cdot)) \mapsto x(\cdot + t)
\end{array}$
\end{center}
\end{defn}

Note that $\psi$ satisfies the flow property:
$$\psi(s+t, x(k)) = x(k + s + t) = x((k + t) + s) = \psi(s, x(k + t)) = \psi(s, \psi(t, x(k))) .$$

We now impose a metric on the set of functions $\bar{\Delta}$.

\begin{defn}\label{bardeltafunction}
Define the function
\begin{center}
$\begin{array}{lc}
f: \bar{\Delta} \times \bar{\Delta} \times \mathbb{Z} \rightarrow \mathbb{R} \\ 
\,\,\,\,(x, y, i) \mapsto \frac{1}{h}\displaystyle\int_{ih}^{(i+1)h}{\delta(x,y,t)dt}
\end{array}$
\end{center}
where 
$$ \delta(x,y,t) =  \left \{ \begin{array}{cc}
1 & \mbox{if } x(t) \neq y(t)\\
0 & \mbox{if } x(t) = y(t)
\end{array} \right.$$
\end{defn}

\begin{thm}\label{bardeltafunctionmetric}
The function
\begin{center}
$
\begin{array}{lc}
 d: \bar{\Delta} \times \bar{\Delta} \rightarrow \mathbb{R} \\
 \,\,\,\,\,\,\,\,\,(x,y) \mapsto \displaystyle \sum_{i = - \infty}^{\infty}\frac{f(x,y,i)}{4^{|i|}}
\end{array} $
\end{center}
is a metric on $\bar{\Delta}$.
\end{thm}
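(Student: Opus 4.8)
The plan is to verify the three metric axioms for $d$ directly, reducing each to a corresponding statement about the auxiliary function $f$. First I would record the elementary properties of $f$ that follow immediately from its definition: since $\delta(x,y,t) \in \{0,1\}$, we have $0 \le f(x,y,i) \le 1$ for every $i$, so the defining series $\sum_i f(x,y,i)/4^{|i|}$ is dominated by $\sum_i 4^{-|i|} = 5/3 < \infty$ and hence converges absolutely; this makes $d$ well-defined and finite. Symmetry is immediate because $\delta(x,y,t) = \delta(y,x,t)$ pointwise, so $f(x,y,i) = f(y,x,i)$ for all $i$ and the two series agree term by term. Non-negativity is likewise clear since every term is non-negative.

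The substantive points are the identity of indiscernibles and the triangle inequality. For the former, $d(x,y) = 0$ forces $f(x,y,i) = 0$ for every $i$, i.e. $\int_{ih}^{(i+1)h} \delta(x,y,t)\,dt = 0$; since $\delta$ is non-negative and $x,y$ are piecewise constant on the intervals $[nh,(n+1)h)$, the integrand $\delta(x,y,\cdot)$ is itself piecewise constant on each such interval, so a vanishing integral forces $\delta(x,y,t) = 0$ on a dense subset of $[ih,(i+1)h)$, hence $x(t) = y(t)$ on all of $[ih,(i+1)h)$ by the piecewise-constant structure. Ranging over all $i \in \mathbb{Z}$ gives $x = y$ as functions on $\mathbb{R}$. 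Conversely $x = y$ clearly gives $d(x,y) = 0$. (This is the one place the piecewise-constant hypothesis is essential; for general measurable $x,y$ one would only get equality almost everywhere.)

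For the triangle inequality, the key observation is that $\delta$ satisfies the triangle inequality pointwise in $t$: for any three symbols, $\delta(x,z,t) \le \delta(x,y,t) + \delta(y,z,t)$, since if $x(t) \ne z(t)$ then $x(t) \ne y(t)$ or $y(t) \ne z(t)$. Integrating over $[ih,(i+1)h)$ and dividing by $h$ gives $f(x,z,i) \le f(x,y,i) + f(y,z,i)$ for each $i$. Multiplying by $4^{-|i|}$ and summing over $i \in \mathbb{Z}$ — valid since all three series converge absolutely — yields $d(x,z) \le d(x,y) + d(y,z)$.

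I do not expect any genuine obstacle here; the proof is routine once one isolates the pointwise inequality $\delta(x,z,t) \le \delta(x,y,t) + \delta(y,z,t)$ and the pointwise identity $\delta(x,y,t) = \delta(y,x,t)$, together with the remark that a non-negative piecewise-constant function with zero integral vanishes identically on each constancy interval. The only mild subtlety is bookkeeping with the doubly-infinite sum: one should state the absolute convergence bound $d(x,y) \le \sum_{i=-\infty}^{\infty} 4^{-|i|} = 5/3$ up front so that all termwise manipulations of the series are justified. I would present the argument in the order: well-definedness and boundedness, symmetry, non-negativity, identity of indiscernibles, triangle inequality.
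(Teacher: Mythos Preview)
Your proof is correct and follows essentially the same route as the paper: both arguments reduce symmetry and the triangle inequality to the corresponding pointwise properties of $\delta$, then pass through $f$ to $d$, and both handle identity of indiscernibles via the piecewise-constant structure on the intervals $[ih,(i+1)h)$. You are somewhat more careful than the paper in explicitly justifying absolute convergence (the bound $d(x,y)\le 5/3$) before manipulating the series termwise, which the paper leaves implicit.
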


\begin{proof}
\ \\
\begin{enumerate}

\item (Non-negativity) $f(x, x, i) = 0$, for all $i \in \mathbb{Z}$.  Therefore, $d(x,x) = 0$.  For $x \neq y$, $f(x,y,i) \neq 0$ for at least one $i \in \mathbb{Z}$, and $f(x, y, i) \geq 0$ for all $i \in \mathbb{Z}$.  Therefore, $d(x,y) > 0$ for all $x \neq y$.
\item (Symmetry) Clearly, $\delta(x,y,t) = \delta(y,x,t)$ for all $t \in \mathbb{R}, x, y \in \bar{\Delta}$.  So, $f(x,y,i) = f(y,x,i)$, $d(x,y) = d(y,x)$.
\item (Triangle inequality) Choose $x,y,z\in\bar\Delta$.  If $x=z$, then as $d(x,z)=0$ and $d$ is nonnegative, then clearly for all $y\in\bar\Delta$, $d(x,z)\leq d(x,y)+d(y,z)$.   

If $x\not =z$, then there exists $t\in\mathbb{R}$ such that $x(t)\not = z(t)$.  If $x(t) \neq z(t)$, then either $x(t) \neq y(t)$ or $y(t) \neq z(t)$.  Therefore, $\delta(x,z,t) = 1$ implies that $\delta(x,y,t) = 1$ and/or $\delta(y,z,t) = 1$, so $f(x,z,i) \leq f(x,y,i) + f(y,z,i)$.  Since $d$ is a linear combination of $f$'s, $d(x,z) \leq d(x,y) + d(y,z)$.
\end{enumerate}
\end{proof}

\begin{prop} 
The mapping 
$\sigma: \Omega \rightarrow \bar\Delta$ where $x\mapsto x(t)$ where $x(i)=x_i$ for all $i\in\mathbb{Z}$ is an isometric isomorphism.
\end{prop}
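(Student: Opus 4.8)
The plan is to unpack what "isometric isomorphism" means here: $\sigma$ should be a bijection from $\Omega$ onto $\bar\Delta$ that preserves the metric structure (and, implicitly, intertwines the shift on $\Omega$ with $\psi$). So I would organize the proof into three parts: (i) $\sigma$ is well-defined with image exactly $\bar\Delta$; (ii) $\sigma$ is a bijection; (iii) $\sigma$ is distance-preserving. For (i), given $x=(\dots,x_{-1},x_0,x_1,\dots)\in\Omega$, define $\sigma(x)$ to be the function that takes the constant value $x_i$ on each interval $[ih,(i+1)h)$. By construction this function is piecewise constant on the intervals $[nh,(n+1)h)$, and its sampled sequence $\{\sigma(x)(ih)\}_{i\in\mathbb{Z}}=\{x_i\}_{i\in\mathbb{Z}}$ lies in $\Omega$; hence $\sigma(x)\in\bar\Delta$. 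Conversely, any $y\in\bar\Delta$ is piecewise constant on those intervals, so $y=\sigma\big(\{y(ih)\}_{i\in\mathbb{Z}}\big)$, giving surjectivity.

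For (ii), injectivity is immediate: if $\sigma(x)=\sigma(x')$ then sampling at the points $ih$ recovers $x_i=x'_i$ for every $i$, so $x=x'$. Combined with surjectivity from (i), $\sigma$ is a bijection. For (iii), I want to show $d(\sigma(x),\sigma(y))$ equals the metric on $\Omega$ from \cite{discretesystems}. The key computation is to evaluate $f(\sigma(x),\sigma(y),i)$. Since $\sigma(x)$ and $\sigma(y)$ are each constant on $[ih,(i+1)h)$ — equal to $x_i$ and $y_i$ respectively — the integrand $\delta(\sigma(x),\sigma(y),t)$ is constant on that interval, equal to $1$ if $x_i\neq y_i$ and $0$ otherwise. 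Therefore
$$
f(\sigma(x),\sigma(y),i)=\frac{1}{h}\int_{ih}^{(i+1)h}\delta(\sigma(x),\sigma(y),t)\,dt=\begin{cases}1 & x_i\neq y_i\\ 0 & x_i=y_i\end{cases},
$$
so that $d(\sigma(x),\sigma(y))=\sum_{i=-\infty}^{\infty}4^{-|i|}\,[x_i\neq y_i]$, which is exactly the standard metric on the shift space $\Omega$ (up to the normalization convention used in \cite{discretesystems}). Hence $\sigma$ is an isometry. One should also remark that $\sigma$ conjugates the one-step shift on $\Omega$ to $\psi(h,\cdot)$ on $\bar\Delta$, which is what makes it an isomorphism of the dynamical structure and not merely of metric spaces; this follows since shifting the sequence by one index and then applying $\sigma$ agrees with applying $\sigma$ and then translating the function by $h$.

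The only real subtlety — and the step I would be most careful about — is matching the metric on $\Omega$ used in \cite{discretesystems} to the series $\sum 4^{-|i|}f(x,y,i)$ appearing in Theorem~\ref{bardeltafunctionmetric}; if the reference uses a different summable weight (e.g. $2^{-|i|}$) or a different per-coordinate penalty, then "isometry" must be read with respect to the induced metric, or the weights reconciled. Everything else is a routine verification: well-definedness and the bijection are formal, and the isometry reduces entirely to the observation that a piecewise-constant function contributes a full unit to $f(\cdot,\cdot,i)$ precisely on the blocks where the underlying symbols disagree. I would therefore present the symbol-disagreement computation as the heart of the argument and treat the rest as short bookkeeping.
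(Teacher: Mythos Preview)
Your proposal is correct and follows essentially the same approach as the paper: the paper dismisses bijectivity in one line (``by the construction of $\bar\Delta$, $\sigma$ is clearly bijective'') and then carries out exactly your integral computation showing $f(\sigma(x),\sigma(y),i)=\bar f(x_i,y_i)$, whence the two metrics agree term by term. Your treatment is actually more careful on the bijection and adds the shift-conjugacy remark, neither of which appears in the paper's proof; your caveat about the weight $4^{-|i|}$ matching the metric in \cite{discretesystems} is prudent, and the paper simply assumes this.
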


\begin{proof}
By the construction of $\bar\Delta$, $\sigma$ is clearly bijective.\\
To show that $\sigma$ is an isometry, it suffices to show that $f(x,y,i) = \bar{f}(x_i,y_i)$, where
$$ \bar{f}(x_i,y_i) =   
\left\{ \begin{array}{cc}
1 & \mbox{if } x(i) \neq y(i)\\
0 & \mbox{if } x(i) = y(i)
\end{array} \right.$$
since the bi-infinite sums for $d$ and $\bar{d}$ are identical.  Note that
$$f(x,y,i) = \frac{1}{h}\int_{ih}^{(i+1)h}{dt} = 1 = \bar{f}(x_i, y_i)$$
for $x(i) \neq y(i)$.
$$ f(x,y,i) = \frac{1}{h}\int_{ih}^{(i+1)h}{0 \cdot dt} = 0 = \bar{f}(x_i, y_i)$$
for $x(i) = y(i)$.
So indeed, $f(x,y,i) = \bar{f}(x_i,y_i)$, and $d(x,y) = \bar{d}(\{x_i\},\{y_i\})$.

\end{proof}


\begin{lem}\label{psicontinuous}
$\psi_t$ is continuous for all $t \in \mathbb{R}$.
\end{lem}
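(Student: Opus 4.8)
The plan is to show that for any fixed $t \in \mathbb{R}$ and any $x \in \bar\Delta$, small perturbations of $y$ in the metric $d$ force $\psi_t(y) = y(\cdot + t)$ to stay close to $\psi_t(x) = x(\cdot + t)$. The key observation is that $d(x,y)$ small means the weighted total measure of the set where $x$ and $y$ disagree is small, with the heaviest weight on the central blocks $[ih,(i+1)h)$ for $|i|$ small. Translating by $t$ shifts which blocks are "central," so I need to control how much the weights can change under a shift by a fixed amount $t$. Since $t$ is fixed, a block originally indexed near $0$ moves to an index that differs from its original by at most $\lceil |t|/h \rceil + 1$, so the weight $4^{-|i|}$ changes by at most a fixed multiplicative constant depending only on $t$ and $h$.

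**First I would** make this precise. Write $\psi_t(x)(s) = x(s+t)$. For a given block index $j$, the integral $f(\psi_t(x), \psi_t(y), j) = \frac1h \int_{jh}^{(j+1)h} \delta(x(s+t), y(s+t))\,ds = \frac1h \int_{jh+t}^{(j+1)h+t} \delta(x,y,u)\,du$ after the substitution $u = s+t$. This interval of length $h$ overlaps at most two consecutive original blocks $[kh,(k+1)h)$ and $[(k+1)h,(k+2)h)$, where $k = \lfloor (jh+t)/h \rfloor$. Hence $f(\psi_t(x),\psi_t(y),j) \le f(x,y,k) + f(x,y,k+1)$. Now $|k| \le |j| + \lceil |t|/h\rceil + 1 =: |j| + m$, so $4^{-|j|} \le 4^{m} \cdot 4^{-|k|}$ and similarly $4^{-|j|} \le 4^{m}\cdot 4^{-|k+1|}\cdot 4$. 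Summing over $j$, each original block index appears as "$k$" or "$k+1$" for only boundedly many $j$ (at most two or three values of $j$, since the map $j \mapsto k$ is essentially a fixed shift), so
\[
d(\psi_t(x),\psi_t(y)) = \sum_{j} \frac{f(\psi_t(x),\psi_t(y),j)}{4^{|j|}} \le C(t,h) \sum_{\ell} \frac{f(x,y,\ell)}{4^{|\ell|}} = C(t,h)\, d(x,y),
\]
for a constant $C(t,h)$ depending only on $t$ and $h$. This Lipschitz bound immediately gives continuity of $\psi_t$: given $\varepsilon > 0$, take $\delta = \varepsilon / C(t,h)$.

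**The main obstacle** is the bookkeeping in the two inequalities above: tracking exactly which original blocks a translated block can meet, and confirming that the multiplicity with which any fixed index $\ell$ is "charged" on the right-hand side is uniformly bounded (so the sum genuinely telescopes into a constant times $d(x,y)$ rather than diverging). One must also handle the case where $jh + t$ is an integer multiple of $h$, in which case the translated block coincides with a single original block and the bound is cleaner. None of this is deep, but it requires care with floor functions and the indexing; once the inequality $f(\psi_t(x),\psi_t(y),j) \le f(x,y,k_j) + f(x,y,k_j+1)$ with $|k_j - j|$ bounded by a constant is in hand, the rest is a routine geometric-series estimate. Note that this argument simultaneously establishes that $\psi_t$ maps $\bar\Delta$ into $\bar\Delta$ only after one checks the piecewise-constant structure is respected — but since the excerpt has already defined $\psi: \mathbb{R}\times\bar\Delta \to \bar\Delta$, I may assume that and focus purely on the metric estimate.
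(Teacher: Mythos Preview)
Your proposal is correct and arrives at the same Lipschitz estimate $d(\psi_t(x),\psi_t(y)) \le C(t,h)\,d(x,y)$ that the paper establishes. The paper's proof is a slightly slicker packaging of the same idea: it rewrites the metric as a single integral $d(x,y) = \frac{1}{h}\int_{-\infty}^{\infty} 4^{-|\lfloor t'/h\rfloor|}\,\delta(x,y,t')\,dt'$ and then applies the pointwise weight inequality $4^{-|\lfloor (t+t')/h\rfloor|} \le 4^{\lceil |t|/h\rceil}\cdot 4^{-|\lfloor t'/h\rfloor|}$ directly under the integral, obtaining the explicit constant $C(t,h)=4^{\lceil |t|/h\rceil}$ without any block-overlap or multiplicity bookkeeping. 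Your discrete, block-by-block version is equivalent in content; the integral form simply absorbs the ``each index is charged boundedly many times'' step into a pointwise inequality, which is why the paper's argument is shorter.
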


\begin{proof}
Given $x, y \in \bar{\Delta}$, we need to show that for all $\epsilon > 0$, there exists $\delta > 0$ such that
$$d(x,y) < \delta \Rightarrow d(\psi_t(x),\psi_t(y))<\epsilon.$$
Given any $\epsilon >0$, take $\delta = \frac{\epsilon}{4^n}$, where $n = \left\lceil|\frac{t}{h}|\right\rceil$, the least integer greater than the absolute value of $\frac{t}{h}$.  It is useful to rewrite $d(x,y)$ in the form
$$ d(x,y) = \frac{1}{h} \int_{-\infty}^{\infty}{\frac{1}{4^{\left |\lfloor \frac{t'}{h}\rfloor \right |}}\delta(x,y,t')dt'}$$
where $\delta$ is as defined above.  Given this, we can write
$$ d(\psi_t(x),\psi_t(y)) = \frac{1}{h} \int_{-\infty}^{\infty}{\frac{1}{4^{\left | \lfloor \frac{(t+t')}{h}  \rfloor \right |}}\delta(x,y,t')dt'}$$
And,
$$ \frac{1}{4^{\left | \lfloor \frac{(t+t')}{h} \rfloor \right |}} \leq 4^{\left \lceil |\frac{t}{h}| \right \rceil} \frac{1}{4^{\left | \lfloor \frac{t'}{h} \rfloor \right|}}$$

So, $$ d(\psi_t(x),\psi_t(y)) = \frac{1}{h} \int_{-\infty}^{\infty}{\frac{1}{4^{\left |\lfloor \frac{(t+t')}{h}  \rfloor \right |}}\delta(x,y,t')dt'}$$
$$ \leq 4^{\left \lceil |\frac{t}{h}| \right \rceil} \frac{1}{h} \int_{-\infty}^{\infty}{\frac{1}{4^{\left |\lfloor \frac{t'}{h} \rfloor \right|}}\delta(x,y,t')dt'}$$
$$ = 4^{\left \lceil |\frac{t}{h}| \right \rceil} d(x,y)$$
$$ < 4^{\left \lceil |\frac{t}{h}| \right \rceil} \delta$$
$$ = \epsilon$$
\end{proof}


\begin{lem}\label{deltabarcompact}
 $\Delta$ is compact.
\end{lem}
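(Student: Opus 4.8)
I would first observe that the formula of Theorem~\ref{bardeltafunctionmetric} defines a metric on all of $\Delta$ by precisely the same three verifications (every function in sight is a step function, so the symmetric-difference sets are measurable and the integrals make sense), so it suffices to prove $\Delta$ is compact. The strategy is to realize $\Delta$ as a continuous image of a compact space. First, $\bar\Delta$ is compact: the proposition identifying $\sigma:\Omega\to\bar\Delta$ as an isometric isomorphism, together with compactness of $\Omega$ (cited from \cite{discretesystems}), gives this at once. Next I would check that
$$\Delta=\bigcup_{t\in[0,h]}\psi_t(\bar\Delta),$$
the inclusion $\supseteq$ being immediate from the definition of $\Delta$, and $\subseteq$ following by writing an arbitrary $t\in\mathbb{R}$ as $t=nh+t_0$ with $n\in\mathbb{Z}$, $t_0\in[0,h)$, and noting that $\psi_{nh}(x)\in\bar\Delta$ whenever $x\in\bar\Delta$ (an integer shift merely reindexes the admissible sample path $\{x(ih)\}$ and preserves piecewise-constancy on the grid $h\mathbb{Z}$), so that $\psi_t(x)=\psi_{t_0}(\psi_{nh}(x))\in\psi_{t_0}(\bar\Delta)$.

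\textbf{The continuity step.} I would then consider the map $F:[0,h]\times\bar\Delta\to\Delta$, $F(t,x)=\psi_t(x)$, which by the display above is surjective, and prove that it is continuous; since $[0,h]\times\bar\Delta$ is a product of two compact spaces, $\Delta=F([0,h]\times\bar\Delta)$ would then be compact. To check continuity at $(t,x)$ I would estimate
$$d\bigl(\psi_{t'}(x'),\psi_t(x)\bigr)\leq d\bigl(\psi_{t'}(x'),\psi_{t'}(x)\bigr)+d\bigl(\psi_{t'}(x),\psi_t(x)\bigr).$$
For the first term, the bound already established inside the proof of Lemma~\ref{psicontinuous} gives $d(\psi_{t'}(x'),\psi_{t'}(x))\leq 4^{\lceil|t'/h|\rceil}\,d(x',x)\leq 4\,d(x',x)$ for every $t'\in[0,h]$, which tends to $0$ as $x'\to x$. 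For the second term I need continuity of $t\mapsto\psi_t(x)$ for fixed $x$; here I would use that $x$ is constant on the grid blocks $[mh,(m+1)h)$, so on every fixed metric window $[ih,(i+1)h)$ the translate $\psi_s(x)$ has at most one discontinuity, and for small $|s-t|$ the functions $\psi_s(x)$ and $\psi_t(x)$ can disagree inside that window only on a set of Lebesgue measure at most $|s-t|$. Hence $f(\psi_s(x),\psi_t(x),i)\leq |s-t|/h$ for every $i$, so $d(\psi_s(x),\psi_t(x))\leq \frac{|s-t|}{h}\sum_{i\in\mathbb{Z}}4^{-|i|}\to 0$ as $s\to t$. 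Combining the two bounds yields continuity of $F$.

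\textbf{Where the difficulty lies.} Granting that $F$ is a continuous surjection from the compact space $[0,h]\times\bar\Delta$, the compactness of $\Delta$ follows immediately. The metric-space remark, the suspension identity, and (given Lemma~\ref{psicontinuous}) the $x$-variable estimate are all routine. The one genuinely delicate point will be the continuity of $t\mapsto\psi_t(x)$: the metric's windows $[ih,(i+1)h)$ are fixed once and for all and do not move with $s$, so they need not line up with the jump points $\{mh-s:m\in\mathbb{Z}\}$ of the translated function $\psi_s(x)$; one must confirm the elementary bookkeeping fact that, because each window has length exactly $h$ and therefore meets at most one of those jump points, a translation by a small amount $|s-t|$ perturbs $\psi_s(x)$ on a set of measure at most $|s-t|$ in each window. (A more hands-on alternative would be a direct sequential argument — from a sequence in $\Delta$, pass to a subsequence along which the translation parameters converge in $[0,h]$ and the underlying bi-infinite paths converge in the compact space $\Omega$, then assemble the limit — but this runs into the same window-alignment subtlety, and the continuous-image argument is cleaner.)
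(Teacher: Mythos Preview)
Your argument is correct and takes a genuinely different route from the paper. The paper proceeds by sequential compactness directly: it views an element of $\Delta$ as a pair (offset in $S^1=\mathbb{R}\bmod h$, bi-infinite path in $\Omega$), extracts a convergent subsequence of offsets by compactness of $S^1$, then runs a pigeonhole/diagonal argument on the coordinates of the bi-infinite paths to extract a further subsequence converging in $\Omega$, and finally checks closure by hand (the limit path has only allowable transitions). In other words, the paper essentially re-proves compactness of $\Omega$ inside the argument rather than importing it. Your approach---writing $\Delta=F([0,h]\times\bar\Delta)$ for $F(t,x)=\psi_t(x)$ and verifying joint continuity of $F$---is the same underlying suspension picture packaged more efficiently: you import compactness of $\bar\Delta$ from the isometry $\sigma:\Omega\to\bar\Delta$ and the cited compactness of $\Omega$, reuse the Lipschitz bound inside Lemma~\ref{psicontinuous} for the $x$-variable, and your window-alignment bookkeeping for the $t$-variable (at most one jump of $\psi_s(x)$ per metric window, hence disagreement of measure $\le|s-t|$) is exactly the delicate point you flag, and it is correct. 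What your route buys is economy and, as a byproduct, joint continuity of the flow on $[0,h]\times\bar\Delta$; what the paper's route buys is self-containment (no appeal to the continuous-image theorem or to compactness of $\Omega$ as a black box). You even sketch the paper's sequential approach in your final parenthetical remark.
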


\begin{proof}
We will show that given any sequence $\{x^n\}, n \in \mathbb{N}$ of functions $x^n \in \Delta$, there exists a subseqence converging to some $x \in \Delta$.  To do this, we consider the space $\Delta$ to be the product of a circle of length $h$ with the set of allowable bi-infinite sequences, $S^1 \times \Omega$, where $S^1 \equiv \mathbb{R}$ mod $h$.  An element $x^n$ of $\Delta$ identifies with an element of $S^1 \times \Omega$ by taking $y^n \in \Omega$ to be the sequence of constant values of $x^n$, with $x^n(0) \equiv y^n_0, x^n(h) \equiv y^n_1,$ etc., and taking $\tau \in [0,h)$ to be the unique offset so that $x^n(t-\tau) \in \bar{\Delta}$.

$S^1$ is compact.  Therefore, given the sequence $\{x^n\} \in S^1 \times \Omega$, there exists a subsequence $\{x^{n_k}\}, k \in \mathbb{N}$ for which the offsets $\{\tau^{n_k}\}$ converge to a value in $[0,h)$.  Therefore, there exists a convergent subsequence $\{\tau^{n_k}\}$ of $\{\tau^n\}$.

For this subsequence $\{x^{n_k}\}$, we want to show that there exists a subsequence $\{x^{n_{k_j}}\}, j \in \mathbb{N}$ such that the bi-infinite sequences $\{y^{n_{k_j}}\}$ converge.  We do this inductively, beginning with the subsequence $\{y^{n_{k_j}}_0\}$.  We know that  $\{y^{n_k}_0\}$ is an infinite sequence of finitely many values, since the state space $S$ is finite.  Therefore, by the pigeonhole principle, there is one value that is repeated infinitely many times.  Take $\{y^{n_{k_j}}_0\}$ to be this value, $s_0$, so that $x^{n_{k_j}}(0) = s_0$ for all $j \in \mathbb{N}$.

Now, we induct.  Given a subsequence of $\{x^{n_k}\}$ that converges at $t = 0, h, -h, 2h, -2h, ..., $ $mh, -mh$, we deduce that there must be a subsequence of this subsequence with one value $x^{n_k}((m+1)h) = s_{(m+1)h}$ repeated infinitely many times, and likewise for $x^{n_k}(-(m+1)h) = s_{-(m+1)h}$.  In this manner, we get an infinite subsequence $\{y^{n_{k_j}}\}$, hence $\{x^{n_{k_j}}\}$, converging to a function that is piecewise constant on $[\tau + nh, \tau + (n+1)h), n \in \mathbb{Z}, \tau \in [0,h]$, with values in $S$.


Finally, we have to show closure.  That is, we need to show that transitions $x(mh) \rightarrow x((m+1)h)$ in our limit function are allowable.  Otherwise, all we would have shown is compactness of $\bar{\Lambda}$, rather than compactness of $\Delta$.  Suppose that $x \notin \Delta$.  Then, there exists some $m \in \mathbb{Z}$ such that the transition $x(mh) \rightarrow x((m+1)h)$ is not allowed.  But, since $\{x^{n_{k_j}}\}$ converges to $x$, we can take $N$ large enough so that $j > N \Rightarrow x^{n_{k_j}}(mh) = x(mh), x^{n_{k_j}}((m+1)h) = x((m+1)h)$.  And, $x^{n_{k_j}} \in \Delta$, so the transition $x^{n_{k_j}}(mh) \rightarrow x^{n_{k_j}}((m+1)h)$ must be allowable.  This is a contradiction.  Therefore, $\{x^{n_{k_j}}\} \rightarrow x \in \Delta$.

\end{proof}

\subsection{Morse Sets and Topological Chaos}


For the following definitions and Proposition \ref{order}, taken from \cite{Kliemann}, let $X$ be a compact metric space with an associated flow $\Phi$.

\begin{defn}\label{invariant}
A set $K \subseteq X$ is called invariant if $\Phi(t,x) \in K$ for all $x \in K, t \in \mathbb{R}$.
\end{defn}

\begin{defn}\label{isolated}
A set $K \subseteq X$ is called isolated if there exists a neighborhood $N$ of $K$ (i.e. a set $N$ with $K \subset$ int $N$) such that $\Phi(t,x) \in N$ for all $t \in \mathbb{R}$ implies $x \in K$.
\end{defn}

\begin{defn}\label{morsedecomp}
A Morse Decomposition on $X$ is a finite collection $\{\mathcal{M}_i, i = 1,...,n\}$ of non-void, pairwise disjoint, invariant, isolated, compact sets such that
\begin{enumerate}
\item For all $x \in X, \omega(x), \alpha(x) \subseteq \displaystyle \bigcup_{i=1}^{n}\mathcal{M}_i$.
\item If there exist $\mathcal{M}_0, \mathcal{M}_1, ..., \mathcal{M}_l$ and $x_1,...x_l \in X \setminus \displaystyle \bigcup_{i=1}^{n}\mathcal{M}_i$ with $\alpha(x_j) \subseteq \mathcal{M}_{j-1}$ and $\omega(x_j) \subseteq \mathcal{M}_j$ for $j = 1,...,l$, then $\mathcal{M}_0 \neq \mathcal{M}_l$.  This condition is equivalent to to the statement that there are no cycles between the sets of the Morse decomposition.
\end{enumerate}
The sets $\mathcal{M}_i$ above are called Morse sets.
\end{defn}

\begin{prop}\label{order}
The relation $\preceq$ given by
$$
\mathcal{M}_i \preceq \mathcal{M}_k \mbox{ if there are } \mathcal{M}_i,\mathcal{M}_{j_1},...,\mathcal{M}_{j_l} =
 \mathcal{M}_k \mbox{ and } x_1,...,x_l \in X $$
 $$\mbox{ with } \alpha(x_m) \subseteq \mathcal{M}_{j_{m-1}} \mbox{ and } \omega(x_m) \subseteq \mathcal{M}_{j_m} \mbox{ for } m = 1,...,l.
$$
is an order (satisfying reflexivity, transitivity, and antisymmetry) on the Morse sets $\mathcal{M}_j$ of a Morse decomposition.
\end{prop}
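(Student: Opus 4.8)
The plan is to verify the three order axioms straight from the definition of $\preceq$, relying on two standard facts that follow from compactness of $X$ and from the Morse sets being invariant, compact, and pairwise disjoint: (i) $\alpha(x)$ and $\omega(x)$ are non-empty for every $x\in X$, since the backward, respectively forward, orbit closures form a nested family of non-empty compact sets whose intersection is non-empty; and (ii) if $x\in\mathcal{M}_i$ then $\alpha(x)\subseteq\mathcal{M}_i$ and $\omega(x)\subseteq\mathcal{M}_i$, because $\mathcal{M}_i$ is invariant and closed.

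For reflexivity, given $\mathcal{M}_i$ I pick any $x\in\mathcal{M}_i$ (the set is non-void) and use the length-one chain $\mathcal{M}_i,\mathcal{M}_i$ with connecting point $x$; fact (ii) gives $\alpha(x),\omega(x)\subseteq\mathcal{M}_i$, so $\mathcal{M}_i\preceq\mathcal{M}_i$. For transitivity, if $\mathcal{M}_i\preceq\mathcal{M}_j$ is witnessed by a chain ending at $\mathcal{M}_j$ and $\mathcal{M}_j\preceq\mathcal{M}_k$ by a chain starting at $\mathcal{M}_j$, I concatenate the two sequences of Morse sets and the two lists of connecting points; every link of the result is already valid except the junction link, which is valid because the relevant $\omega$- and $\alpha$-limit sets both lie in $\mathcal{M}_j$. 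Hence $\mathcal{M}_i\preceq\mathcal{M}_k$.

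Antisymmetry is the substantive step, and the difficulty is a mismatch: condition (2) of Definition \ref{morsedecomp} only rules out cyclic connection chains whose connecting points avoid $\bigcup_i\mathcal{M}_i$, whereas the definition of $\preceq$ permits arbitrary connecting points in $X$. Assuming $\mathcal{M}_i\preceq\mathcal{M}_k$ and $\mathcal{M}_k\preceq\mathcal{M}_i$, I concatenate as above to get a closed chain $\mathcal{M}_i=\mathcal{M}_{j_0},\dots,\mathcal{M}_{j_l}=\mathcal{M}_i$ with connecting points $x_1,\dots,x_l\in X$, in which $\mathcal{M}_k$ occurs. The crux is that a link whose connecting point lies in a Morse set is inessential: if $x_m\in\mathcal{M}_p$, then by (i) and (ii), $\emptyset\neq\alpha(x_m)\subseteq\mathcal{M}_{j_{m-1}}\cap\mathcal{M}_p$ and $\emptyset\neq\omega(x_m)\subseteq\mathcal{M}_{j_m}\cap\mathcal{M}_p$, so pairwise disjointness forces $\mathcal{M}_{j_{m-1}}=\mathcal{M}_p=\mathcal{M}_{j_m}$, i.e.\ the link is a loop at one Morse set; I then delete $x_m$ together with the Morse-set entry at position $m$, obtaining a strictly shorter closed chain with the same endpoints and the same set of Morse sets visited. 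Iterating, this pruning must terminate, and it can only stop once no link is a loop, i.e.\ once every remaining connecting point lies outside $\bigcup_i\mathcal{M}_i$. Now if $\mathcal{M}_i\neq\mathcal{M}_k$, the pruned chain still visits these two distinct Morse sets, hence has length $\geq 1$; but then it is a cyclic chain with all connecting points in $X\setminus\bigcup_i\mathcal{M}_i$ whose first and last Morse sets are both $\mathcal{M}_i$, contradicting condition (2). Therefore $\mathcal{M}_i=\mathcal{M}_k$, which establishes antisymmetry and completes the proof that $\preceq$ is an order.

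I expect this pruning argument to be the only delicate point: one must recognize the gap between condition (2) (connecting points outside the Morse sets) and the definition of $\preceq$ (connecting points anywhere in $X$), and then check carefully that removing loop-links preserves both the endpoints and the collection of Morse sets that the chain meets.
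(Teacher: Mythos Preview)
Your argument is correct. The paper does not actually prove this proposition; immediately after stating it, the authors write that the proof can be found in \cite{Kliemann} and move on. So there is no in-paper proof to compare against.

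That said, your self-contained verification is sound and worth keeping. Reflexivity and transitivity are routine. For antisymmetry you correctly identify the real issue: the ``no cycles'' clause in Definition~\ref{morsedecomp} requires the connecting points $x_m$ to lie in $X\setminus\bigcup_i\mathcal{M}_i$, whereas the relation $\preceq$ allows arbitrary $x_m\in X$. Your pruning step closes this gap cleanly: if some $x_m$ lies in a Morse set $\mathcal{M}_p$, then nonemptiness of $\alpha(x_m),\omega(x_m)$ together with invariance and pairwise disjointness forces $\mathcal{M}_{j_{m-1}}=\mathcal{M}_p=\mathcal{M}_{j_m}$, so that link may be deleted without changing the set of Morse sets visited. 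After finitely many deletions you are left either with a trivial chain (forcing $\mathcal{M}_i=\mathcal{M}_k$) or with a genuine cycle whose connecting points all avoid $\bigcup_i\mathcal{M}_i$, which contradicts condition~(2). This is exactly the standard way the result is established.
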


The proof of this proposition can be found in \cite{Kliemann}.


\begin{defn}\label{lifts}
The lift $\bar{\Delta}_C \subseteq \bar{\Delta}$ of a communicating class $C$ is defined by
$$
\bar{\Delta}_C \equiv \{ f \in \bar{\Delta} | f(t) \in C \, \forall \, t \in \mathbb{R} \}
$$
$\Delta_C$ is defined as 
$$
\Delta_C \equiv \bar{\Delta}_C \cap \Delta
$$
\end{defn}

\begin{thm}\label{morse1}
The lifts of the communicating classes $\bar{\Delta}_C$ are Morse sets for the dynamical system $\psi$.
\end{thm}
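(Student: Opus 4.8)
The plan is to show that the family $\{\Delta_C : C \text{ a communicating class of } G\}$ — where $\Delta_C$ denotes the $\psi$-invariant lift $\{f\in\Delta : f(t)\in C\text{ for all }t\}$, i.e.\ the flow-saturation of $\bar\Delta_C$ — is a Morse decomposition of the flow $\psi$ in the sense of Definition~\ref{morsedecomp}, so that each $\Delta_C$ is a Morse set. The routine items are quick. The collection is finite since $V$ is finite. Each $\Delta_C$ is non-void: a communicating class is strongly connected, hence carries a periodic bi-infinite admissible path, which lifts to an element of $\bar\Delta_C$. Distinct communicating classes are disjoint subsets of $V$ (if they shared a vertex their union would still be a communicating set, contradicting the maximality clause in the definition of a communicating class), and a single function cannot take all its values in two disjoint sets, so $\Delta_C\cap\Delta_{C'}=\emptyset$ for $C\neq C'$. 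Invariance of $\Delta_C$ under $\psi$ is immediate, since $\psi(t,\cdot)$ only translates the argument and ``$f(s)\in C$ for all $s$'' is translation-invariant. This leaves two substantive points: the topological properties (compactness, isolatedness) of each $\Delta_C$, and the two clauses of Definition~\ref{morsedecomp}.

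For the topological properties I would prove one quantitative lemma: \emph{if $x\in\Delta$ takes some value $v\notin C$, then $d(\psi(t,x),\Delta_C)\geq 5/8$ for a suitable $t$.} Since $x\in\Delta$, it is constant and equal to $v$ on an interval of length $h$; choosing $t$ so that this interval is carried onto $[-h/2,h/2)$, the translate $\psi(t,x)$ disagrees with \emph{every} function valued in $C$ at every point of $[-h/2,h/2)$, so (by the definition of $d$ in Theorem~\ref{bardeltafunctionmetric}) the $i=0$ and $i=-1$ terms alone contribute at least $\tfrac12+\tfrac14\cdot\tfrac12=\tfrac58$. Taking $N=\{x\in\Delta : d(x,\Delta_C)<5/8\}$ gives an isolating neighborhood, so $\Delta_C$ is isolated. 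The same estimate, with the relevant position index held fixed along a convergent sequence, shows that a $d$-limit of functions valued in $C$ is again valued in $C$; hence $\Delta_C$ is closed in $\Delta$, and a closed subset of the compact space $\Delta$ (Lemma~\ref{deltabarcompact}) is compact.

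The core of Definition~\ref{morsedecomp} is Clause~1. I would first record a purely combinatorial fact about $N$-graphs: \emph{every one-sided infinite admissible path $p_0p_1p_2\cdots$ eventually remains inside a single communicating class.} This is pigeonhole — the set $R$ of vertices visited infinitely often is finite and non-empty, any two of its members communicate (each is visited after the other, in both orders), so $R$ lies in a communicating class $C$, and only finitely many steps of the path lie outside $R$. Applied to the forward and backward tails of an arbitrary $x\in\Delta$, this produces communicating classes $C^{+}$ and $C^{-}$ containing those tails. To pass from tails to limit sets, observe that $\psi(t_k,x)\to y$ in $d$ forces, on each fixed interval of length $h$, the Lebesgue measure of the disagreement set between $\psi(t_k,x)$ and $y$ to tend to $0$; since $y\in\Delta$ is constant on each such interval, on each one it must agree with $\psi(t_k,x)$ at \emph{some} point for $k$ large, and the value there is a far-out tail value of $x$, hence lies in $C^{+}$ (for $\omega(x)$) or $C^{-}$ (for $\alpha(x)$). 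Thus $\omega(x)\subseteq\Delta_{C^{+}}$ and $\alpha(x)\subseteq\Delta_{C^{-}}$, which is Clause~1.

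Clause~2 (no cycles) reduces to acyclicity of the condensation of $G$ into communicating classes. If $x_j\notin\bigcup_C\Delta_C$ has $\alpha(x_j)\subseteq\Delta_{C_{j-1}}$ and $\omega(x_j)\subseteq\Delta_{C_j}$, then — using that limit sets are non-empty and the $\Delta_C$ are pairwise disjoint — the backward tail of $x_j$ lies in $C_{j-1}$ and its forward tail in $C_j$, so a finite stretch of $x_j$'s path is an admissible path from $C_{j-1}$ to $C_j$; moreover $C_{j-1}\neq C_j$, since otherwise every vertex $x_j$ visits would lie on a path from $C_{j-1}$ back to $C_{j-1}$, hence communicate with $C_{j-1}$ and lie in it, forcing $x_j\in\Delta_{C_{j-1}}$. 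A cycle $\mathcal{M}_0=\Delta_{C_0},\dots,\mathcal{M}_l=\Delta_{C_l}$ with $\mathcal{M}_0=\mathcal{M}_l$ would then give admissible paths $C_0\to C_1\to\cdots\to C_l=C_0$ with $C_0\neq C_1$, making $C_0$ and $C_1$ communicate — impossible. I expect the main obstacle to be the bookkeeping in Clause~1: cleanly relating convergence in the averaged, integrated metric $d$ to the values and jump structure of the limit function $y$, and correctly matching the ``eventual communicating class'' of a tail to the one prescribed by the hypothesis. A more indirect but tidier route would be to invoke the corresponding statement for the discrete left shift on $\Omega$ from \cite{discretesystems} via the identification $\Delta\cong S^1\times\Omega$ from the proof of Lemma~\ref{deltabarcompact}: $\psi$ is (conjugate to) the suspension flow of the shift, and Morse decompositions, together with their $\omega$- and $\alpha$-limit sets, pass between a map and its suspension, with $\Delta_C$ corresponding to the suspension of $\Omega_C$.
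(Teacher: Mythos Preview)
Your approach is essentially the same as the paper's: verify the defining conditions of a Morse decomposition directly. There are, however, two points worth noting where you are more careful than the paper.

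First, you explicitly work with the flow-saturated lift $\Delta_C=\{f\in\Delta:f(t)\in C\text{ for all }t\}$ rather than the paper's $\bar\Delta_C\subset\bar\Delta$; since $\psi_t$ does not preserve $\bar\Delta$ for generic $t$, this is the technically correct object, and your explanation of why it matters is clear. Second, the paper announces ``seven conditions'' but in fact checks only six: it verifies non-void, pairwise disjoint, invariant, isolated, compact, and no cycles, but never actually proves Clause~1 of Definition~\ref{morsedecomp}, that every $\omega$- and $\alpha$-limit set lies in some $\bar\Delta_C$. Your pigeonhole argument on tails, together with the passage from $d$-convergence to agreement of values on each length-$h$ interval, fills exactly this gap. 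Your isolation constant $5/8$ versus the paper's $1/4$ is an inessential difference --- both arguments exploit that a function with a value outside $C$ can be shifted so that the offending value sits near the origin and contributes a definite amount to the metric. The suspension-flow alternative you sketch at the end is a legitimate and tidier indirect route that the paper does not take; it trades the explicit metric estimates for the general correspondence between Morse decompositions of a map and of its suspension, importing the discrete result from \cite{discretesystems}.
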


\begin{proof}
We check the seven conditions in turn.
\begin{enumerate}
\item \textbf{Non-void}
Since the empty set is not a communicating class, the lift of any communicating class must be non-empty.
\item \textbf{Pairwise disjoint}
Suppose that there exists $f \in \bar{\Delta}_C, \bar{\Delta}_{C'}$ with $C \neq C'$.  Then, $f(0) \in C, C'$.  But, by the maximality of communicating classes, $f(0) \in C, C'$ implies $C = C'$.  So, $\bar{\Delta}_{C} = \bar{\Delta}_{C'}$.
\item \textbf{Invariant}
By construction of $\bar{\Delta}$, $\psi(t,f) \in \bar{\Delta}$ for all $t \in \mathbb{R}, f \in \bar{\Delta}$.  And, if $f(s) \in C$ for all $s \in \mathbb{R}$, then $\psi(t,f)(s) = f(t+s) \in C$.  So, $\psi(t,f) \in \bar{\Delta}_C$ for all $t \in \mathbb{R}, f \in \bar{\Delta}_C$.
\item \textbf{Isolated}
Pick $\epsilon = 1/4$.  Suppose that there exists $g \notin \bar{\Delta}_C$ such that for some $f \in \bar{\Delta}_C$, $d(g,f) < \epsilon$.  Since $g \notin \bar{\Delta}_C$, there exists $t_0$ such that $g(t_0) \notin C$.  Let $g' = \psi(-t_0,g)$, so that $g'(0) \notin C$.  But then, $g'$ differs from any function in $\bar{\Delta}_C$ on at least some interval of length $h$ containing $0$.  The distance, therefore, between $g'$ and any function in $\bar{\Delta}_C$ must be greater than $1/4$.  So, given any $g \notin \bar{\Delta}_C$ but within $1/4$ of $\bar{\Delta}_C$, there exists $t_0$ such that $d(\psi(t_0, g),f') > \frac{1}{4}$ for any $f' \in \bar{\Delta}_C$.  Hence, $\bar{\Delta}_C$ is isolated.
\item \textbf{Compact}
By an argument similar to that for compactness of $\Delta$ and by compactness of $\Omega_C$, $\bar{\Delta}_C$ is compact.

\item \textbf{No cycles}
Again, this is similar to the corresponding proof in \cite{discretesystems}.  Suppose that there exist $f, g \in \bar{\Delta}$ such that $\alpha(f) \subseteq \bar{\Delta}_C$, $\alpha(g) \subseteq \bar{\Delta}_{C'}$ and $\omega(g) \subseteq \bar{\Delta}_C$, $\omega(f) \subseteq \bar{\Delta}_{C'}$.  Then, since all the transitions in $f,g$ must be allowable, there must exist an admissible path from $C$ to $C'$ as well as one from $C'$ to $C$.  But, this contradicts maximality of communicating classes.  So, no such cycle exists.
\end{enumerate}

\end{proof}


\begin{defn}\label{topologicallytransitive}
A flow on a metric space $X$ is called topologically transitive if there exists $x \in X$ such that $\omega(x) = X$.
\end{defn}

\begin{lem}\label{omegalimit}
Given any communicating class $C$, there exists $f^* \in \bar{\Delta}_C$ such that $\omega(f^*) = \bar{\Delta}_C$ (i.e. $\psi$ is topologically transitive on lifts of communicating classes).
\end{lem}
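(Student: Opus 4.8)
The plan is to build the function $f^*$ explicitly by concatenating, in a single bi-infinite sequence, every finite admissible path that lives in $C$, so that every local pattern that can occur in $\bar\Delta_C$ is eventually seen (and seen again and again) along $f^*$. Since $C$ is a communicating class of an $N$-graph, by Definition \ref{def 13} any two vertices of $C$ are joined by an admissible path staying in $C$ (the intermediate vertices of such a connecting path lie in $C$ by maximality of $C$). Also the state space is finite, so for each $n\in\mathbb{N}$ there are only finitely many admissible words of length $n$ with all letters in $C$. Enumerate all such finite words as $w_1, w_2, w_3, \dots$. I would then splice them together: starting from $w_1$, use a connecting path inside $C$ from the last letter of $w_1$ to the first letter of $w_2$, append $w_2$, connect to $w_3$, and so on; do the symmetric thing going to the left. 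This produces a bi-infinite sequence $y^*\in\Omega_C$, and letting $f^* = \sigma(y^*)\in\bar\Delta_C$ be the associated piecewise-constant function, I get a candidate with $f^*(t)\in C$ for all $t$.

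Next I would verify $\omega(f^*) = \bar\Delta_C$ by a two-sided containment. The inclusion $\omega(f^*)\subseteq \bar\Delta_C$ is immediate: $\bar\Delta_C$ is closed (it is compact by Theorem \ref{morse1}, item 5) and invariant, and $f^*\in\bar\Delta_C$, so the whole forward orbit and hence its limit set stays in $\bar\Delta_C$. For the reverse inclusion, take any $g\in\bar\Delta_C$. I want times $t_k\to\infty$ with $\psi(t_k, f^*)\to g$ in the metric $d$ of Theorem \ref{bardeltafunctionmetric}. Fix $k$; to be within distance $\tfrac{1}{4^{k}}$ (up to a harmless constant) of $g$ it suffices that $\psi(t, f^*)$ agrees with $g$ on the block of indices $\{-k, \dots, k\}$, i.e. that the length-$(2k+1)$ word of $g$ centered at $0$ appears in $y^*$ at the appropriate offset. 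Since $g\in\bar\Delta_C$, that central word is an admissible word with all letters in $C$, hence it equals some $w_j$ in our enumeration, hence it occurs in $y^*$ — and because we re-inserted every word infinitely often (just repeat the enumeration, or note each $w_j$ recurs since we can revisit it), it occurs at arbitrarily large positive shifts. Choosing $t_k$ to be $h$ times such a shift (plus the small offset needed to line up the piecewise-constant pieces, which is legitimate since $\Delta$ is closed under all real translations) gives $d(\psi(t_k,f^*), g)\to 0$.

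The one genuinely delicate point — the main obstacle — is bookkeeping the metric estimate carefully: the metric $d$ weights index $i$ by $4^{-|i|}$ and integrates over blocks of length $h$, so I must check that agreement of $\psi(t,f^*)$ and $g$ on a long central block of indices really does force $d$ small, controlling the tail $\sum_{|i|>k} f(\cdot,\cdot,i)/4^{|i|} \le \sum_{|i|>k} 1/4^{|i|}$, which is a convergent geometric tail $\to 0$ as $k\to\infty$. I also need to make sure the required offset $\tau\in[0,h)$ aligning $g$'s jump points with those of the shifted $f^*$ is handled — but since $g\in\bar\Delta_C\subseteq\Delta$ and $\Delta$ contains all translates, and since distinct offsets only change $d$ continuously (by Lemma \ref{psicontinuous}), I can absorb this into the choice of $t_k$. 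Finally I should remark that the same construction, with the roles of forward and backward time unchanged, shows $\alpha(f^*) = \bar\Delta_C$ as well, so $\bar\Delta_C$ is topologically transitive in both time directions, which is exactly what is needed for it to behave as a nontrivial Morse set.
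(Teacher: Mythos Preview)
Your argument is correct and follows the same route as the paper's: produce a point in $\Omega_C$ whose forward orbit is dense, then push it through the isometry $\sigma$ to $\bar\Delta_C$ and use invariance/compactness of $\bar\Delta_C$ for the reverse inclusion. The only real difference is that the paper does not carry out the concatenation-of-all-admissible-words construction; it simply cites the discrete result $\omega(x^*)=\Omega_C$ from \cite{discretesystems} and then observes that $\bar\Delta_C$ is the union of real-time shifts of $\Delta_C$. Your version is more self-contained; the paper's is shorter. One small cleanup: your worry about an offset $\tau\in[0,h)$ is unnecessary here, because $\bar\Delta_C\subseteq\bar\Delta$ by definition, so every target $g$ already has its jumps on the grid $h\mathbb{Z}$ and the approximating times $t_k$ may be taken in $h\mathbb{Z}$.
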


\begin{proof}
It has been shown in \cite{discretesystems} that for the discrete system, there exists $x^* \in \Omega_C$ such that $\omega(x^*) = \Omega_C$.  By the correspondence between sequences in $\Omega$ and functions in $\Delta$, there exists $f^* \in \Delta_C$ given by $$ f^*(nh) = x^*_n, n \in \mathbb{Z}$$ such that $\Delta_C \subseteq \omega(f^*)$.  And, since $\bar{\Delta}_C$ is given by the shifts $\psi(t,\Delta_C)$, it is clear that $\bar{\Delta}_C \subseteq \omega(f^*)$.  $\bar{\Delta}_C$ is invariant by Theorem \ref{morse1}, so $\omega(f^*) \subseteq \bar{\Delta}_C$, and $\omega(f^*) = \bar{\Delta}_C$.
\end{proof}

Since the $\omega$-limit sets of a point on a compact space are connected, we get the following corollary.

\begin{cor}\label{connected}
$\bar{\Delta}_C$ is connected.
\end{cor}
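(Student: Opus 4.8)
The plan is to deduce Corollary~\ref{connected} directly from Lemma~\ref{omegalimit} together with the standard fact that the $\omega$-limit set of a single point in a compact space is connected. First I would recall why that standard fact holds in our setting: by Lemma~\ref{omegalimit} there is an $f^* \in \bar{\Delta}_C$ with $\omega(f^*) = \bar{\Delta}_C$, and by Theorem~\ref{morse1} the set $\bar{\Delta}_C$ is compact and invariant. Moreover, $\bar{\Delta}_C$ is nonempty (again by Theorem~\ref{morse1}). So it suffices to argue that the $\omega$-limit set of a point whose forward orbit is eventually contained in a compact set is connected.

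Next I would supply the short topological argument. Suppose, for contradiction, that $\bar{\Delta}_C = \omega(f^*)$ is disconnected, say $\bar{\Delta}_C = A \sqcup B$ with $A, B$ nonempty, closed (hence compact, since $\bar{\Delta}_C$ is compact), and disjoint. Since $A$ and $B$ are disjoint compact sets in a metric space, they are separated by a positive distance $2\rho > 0$. Both $A$ and $B$ meet $\omega(f^*)$, so there are arbitrarily large times $s_k \to \infty$ with $\psi(s_k, f^*)$ within $\rho$ of $A$, and arbitrarily large times $t_k \to \infty$ with $\psi(t_k, f^*)$ within $\rho$ of $B$. Interleaving these, along the connected orbit $t \mapsto \psi(t, f^*)$ (continuous by Lemma~\ref{psicontinuous}) the real-valued continuous function $t \mapsto \operatorname{dist}(\psi(t,f^*), A)$ takes values $\le \rho$ and $\ge \rho$ infinitely often as $t \to \infty$, so by the intermediate value theorem there is a sequence $u_k \to \infty$ with $\operatorname{dist}(\psi(u_k, f^*), A) = \rho$ exactly. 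By compactness of $\bar{\Delta}$ (Lemma~\ref{deltabarcompact}) — here I should note $\bar{\Delta}_C \subseteq \bar\Delta$ and actually the whole orbit lives in the compact space — the sequence $\psi(u_k, f^*)$ has a subsequential limit $z$, which lies in $\omega(f^*) = \bar{\Delta}_C = A \sqcup B$, yet satisfies $\operatorname{dist}(z, A) = \rho$, so $z \notin A$ and $\operatorname{dist}(z, B) \ge \rho > 0$ forces $z \notin B$. This contradicts $z \in A \sqcup B$.

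I would then close by remarking that this is exactly the sentence preceding the corollary in the excerpt ("the $\omega$-limit sets of a point on a compact space are connected"), so one may alternatively just cite that standard result, e.g. from \cite{Kliemann}, and conclude immediately from Lemma~\ref{omegalimit}.

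The main obstacle is a minor one: one must be a little careful that the orbit of $f^*$ really stays inside a fixed compact set so that the intermediate-value/compactness argument applies, and that distances between the separated pieces are genuinely bounded below. Since $f^* \in \bar{\Delta}_C$ and $\bar{\Delta}_C$ is invariant (Theorem~\ref{morse1}) and compact, the entire forward orbit $\{\psi(t,f^*) : t \ge 0\}$ sits in the compact set $\bar{\Delta}_C$, so no issue arises; the separation constant exists because disjoint compact subsets of a metric space have positive distance. Everything else is routine point-set topology, so the corollary follows with essentially no new computation beyond invoking Lemma~\ref{omegalimit}, Theorem~\ref{morse1}, and Lemma~\ref{psicontinuous}.
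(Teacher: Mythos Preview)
Your proposal is correct and follows essentially the same approach as the paper: the paper simply invokes the standard fact that the $\omega$-limit set of a point in a compact space is connected and combines it with Lemma~\ref{omegalimit}, which is exactly what you do (with the added bonus that you spell out the standard connectedness argument). One small remark: the continuity you need for the intermediate value step is continuity of $t \mapsto \psi(t,f^*)$ in the time variable, whereas Lemma~\ref{psicontinuous} gives continuity of $\psi_t$ in the space variable; the time-continuity is easy to verify directly from the metric, but you may want to cite it more carefully.
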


\begin{prop}\label{dense}
The set of all functions $f^*$ satisfying $\omega(f^*) = \bar{\Delta}_C$ is dense in $\bar{\Delta}_C$
\end{prop}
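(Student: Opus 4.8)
The plan is to prove density directly by a ``perturb, then steer into a transitive orbit'' construction. Fix an arbitrary $f\in\bar\Delta_C$ and an $\epsilon>0$; we must produce $g\in\bar\Delta_C$ with $d(f,g)<\epsilon$ and $\omega(g)=\bar\Delta_C$. The mechanism is the exponential weighting in the metric of Theorem \ref{bardeltafunctionmetric}: if $g$ agrees with $f$ on every block $[ih,(i+1)h)$ with $i\le N-1$, then every term of index $i\le N-1$ in the series for $d(f,g)$ vanishes, while each remaining term is at most $4^{-|i|}$, so $d(f,g)\le\sum_{i\ge N}4^{-i}=\tfrac43\,4^{-N}$. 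Choosing $N$ with $\tfrac43\,4^{-N}<\epsilon$, it therefore suffices to modify $f$ only on the ray $[Nh,\infty)$ while leaving it untouched on $(-\infty,Nh)$.

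By Lemma \ref{omegalimit}, fix once and for all a function $f^{*}\in\bar\Delta_C$ with $\omega(f^{*})=\bar\Delta_C$. Because $C$ is a communicating class, there is an admissible path $(c_0,c_1,\dots,c_p)$ with all $c_j\in C$, $c_0=f((N-1)h)$, and $c_p=f^{*}(0)$. Define $g$ by setting $g(t)=f(t)$ for $t<Nh$; letting $g$ be constant equal to $c_j$ on the block $[(N-1+j)h,(N+j)h)$ for $j=1,\dots,p$; and setting $g(t)=f^{*}\!\big(t-(N+p-1)h\big)$ for $t\ge(N+p-1)h$. Writing $a=(N+p-1)h\in h\mathbb Z$, the key features are that $g$ still agrees with $f$ on $(-\infty,Nh)$ and that $g$ coincides with the translate $\psi(-a,f^{*})=f^{*}(\cdot-a)$ on the whole ray $[a,\infty)$.

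It remains to check three things. First, $g\in\bar\Delta_C$: every block value lies in $C$ (those coming from $f$ by hypothesis, the $c_j$ by construction, those coming from $f^{*}$ since $f^{*}\in\bar\Delta_C$); every consecutive transition is an edge of $G$ (those internal to $f$ and to $f^{*}$ are admissible, and the spliced transitions are exactly the edges $(c_0,c_1),\dots,(c_{p-1},c_p)$ of the chosen path, the first meeting up correctly since $g((N-1)h)=f((N-1)h)=c_0$ and the last since the block after $c_p$ carries $f^{*}(h)$); and $g$ is piecewise constant on the $h$-blocks by construction. Second, $d(f,g)<\epsilon$ by the tail estimate of the first paragraph, since $g=f$ on $(-\infty,Nh)$. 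Third, $\omega(g)=\bar\Delta_C$: since $g$ and $f^{*}(\cdot-a)$ agree on $[a,\infty)$, for every $t$ the functions $\psi(t,g)$ and $\psi(t,f^{*}(\cdot-a))$ agree on all blocks of index at least $(a-t)/h$, so by the same geometric tail bound $d\big(\psi(t,g),\psi(t,f^{*}(\cdot-a))\big)\le\sum_{i<(a-t)/h}4^{-|i|}\to0$ as $t\to+\infty$; hence $\psi(t,g)$ and $\psi(t,f^{*}(\cdot-a))$ have the same limit points, so $\omega(g)=\omega(f^{*}(\cdot-a))=\omega(\psi(-a,f^{*}))=\omega(f^{*})=\bar\Delta_C$, the next-to-last equality being the invariance of $\omega$-limit sets along orbits of the flow $\psi$. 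As $f\in\bar\Delta_C$ and $\epsilon>0$ were arbitrary, the transitive functions are dense in $\bar\Delta_C$.

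The routine parts are the two geometric-series estimates and the edge-checking for the splice; the one step deserving care is the third, where the translation bookkeeping $\psi(t,g)(s)=g(s+t)$ must be handled so that the interval of agreement of $\psi(t,g)$ and $\psi(t,f^{*}(\cdot-a))$ really does exhaust $\mathbb Z$ from the left as $t\to+\infty$, which is what forces $d\to0$. One could instead shortcut the construction by transferring the analogous density statement for $\Omega_C$ from \cite{discretesystems} through the isometric isomorphism between $\Omega$ and $\bar\Delta$, but the self-contained argument above avoids having to reconcile the discrete-time and continuous-time notions of $\omega$-limit set.
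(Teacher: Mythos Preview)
Your proof is correct, but it takes a different and considerably longer route than the paper's. The paper observes that the single transitive orbit already does all the work: since $f\in\bar\Delta_C=\omega(f^{*})$, the very definition of $\omega$-limit set gives a time $t$ with $d(\psi(t,f^{*}),f)<\epsilon$; and $\psi(t,f^{*})$ is itself transitive because $\omega(\psi(t,f^{*}))=\omega(f^{*})=\bar\Delta_C$. In other words, the orbit of any transitive point is dense and consists entirely of transitive points, so density follows in one line from Lemma~\ref{omegalimit} without any splicing, path-finding, or tail estimates.

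Your construction---keep $f$ on $(-\infty,Nh)$, bridge via an admissible path in $C$, then follow a shift of $f^{*}$---is a perfectly valid alternative and has the merit of being explicit: you exhibit the approximating transitive function rather than merely asserting its existence along an orbit. The cost is the bookkeeping you carry out (edge-checking at the two splice points, two geometric tail bounds, and the asymptotic-agreement argument for $\omega(g)=\omega(\psi(-a,f^{*}))$). All of that is done correctly. It is worth noting that your closing remark about transferring density from $\Omega_C$ via the isometric isomorphism, while workable, is still more involved than the paper's argument, which never leaves $\bar\Delta_C$ and uses nothing beyond the definition of $\omega$-limit set and the orbit-invariance $\omega(\psi(t,f^{*}))=\omega(f^{*})$.
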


\begin{proof}
Given $f \in \bar{\Delta}_C$, there exists $f^*$ such that $f \in \omega(f^*)$, by Lemma \ref{omegalimit}.  Therefore, given $\epsilon > 0$, there exists $t \in \mathbb{R}$ such that $d(\psi(t,f^*),f) < \epsilon$.   $\omega(f^*) = \bar{\Delta}_C$ implies $\omega(\psi(t,f^*)) = \bar{\Delta}_C$. So, for any $f \in \bar{\Delta}_C$ and any $\epsilon > 0$, there exists a function $\psi(t,f^*) \in \bar{\Delta}_C$ with $d(\psi(t,f^*),f) < \epsilon$ and $\omega(\psi(t,f^*)) = \bar{\Delta}_C$.
\end{proof}

\begin{defn}\label{finer}
A Morse Decomposition $\{\mathcal{M}_1,...,\mathcal{M}_n\}$ is called finer than a Morse Decomposition $\{\mathcal{M}'_1,...,\mathcal{M}'_l\}$ if for all $j \in \{1,...,l\}$ there exists $i \in \{1,...,n\}$ such that $\mathcal{M}_i \subseteq \mathcal{M}'_j$, where containment is proper for at least one $j$.
\end{defn}

\begin{thm}\label{finest}
The lifts of the communicating classes $\bar{\Delta}_C$ form a finest Morse decomposition on $\bar{\Delta}$.
\end{thm}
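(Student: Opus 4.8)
The plan is to check that $\{\bar\Delta_C\}$ satisfies the two conditions in the definition of a Morse decomposition (Definition~\ref{morsedecomp}) and then to prove minimality. The five Morse-set properties (non-void, pairwise disjoint, invariant, isolated, compact) together with the no-cycle condition are already supplied by Theorem~\ref{morse1}, so the only decomposition axiom left to verify is condition~1: that $\omega(f),\alpha(f)\subseteq\bigcup_C\bar\Delta_C$ for every $f$. Throughout, the lever is the correspondence with the discrete left shift on $\Omega$, for which \cite{discretesystems} proves that the lifts $\Omega_C$ form the finest Morse decomposition; in particular every $\omega$- and $\alpha$-limit set in $\Omega$ lies inside a single $\Omega_C$.

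For condition~1 I would work through the identification of $\bar\Delta$ with $S^1\times\Omega$ used in the proof of Lemma~\ref{deltabarcompact}: a function $f$ is encoded by an offset $\tau\in[0,h)$ together with the sequence $y\in\Omega$ of its constant values, and $\psi_t$ rotates the $S^1$-coordinate while applying one left shift $\theta$ to $y$ each time that coordinate wraps past $h$. If $g\in\omega(f)$, write $g=\lim_k\psi(t_k,f)$ with $t_k\to\infty$; passing to a subsequence so the offsets converge, a short argument with the metric $d$ (as in the proof of Lemma~\ref{deltabarcompact}) shows the $\Omega$-coordinates $\theta^{m_k}(y)$, where $m_k\to\infty$, converge in $\Omega$ to the sequence $z$ underlying $g$. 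Hence $z\in\omega(y)$, so by \cite{discretesystems} every symbol of $z$ lies in one communicating class $C$; therefore $g$ is constant-valued in $C$, i.e. $g\in\bar\Delta_C$. The identical argument with $t_k\to-\infty$ handles $\alpha(f)$, so $\{\bar\Delta_C\}$ is a Morse decomposition.

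To show it is finest, let $\{\mathcal M_1,\dots,\mathcal M_l\}$ be any Morse decomposition of $\bar\Delta$. By Lemma~\ref{omegalimit} each communicating class $C$ has some $f^*\in\bar\Delta_C$ with $\omega(f^*)=\bar\Delta_C$, so $\bar\Delta_C=\omega(f^*)\subseteq\bigcup_j\mathcal M_j$; since $\bar\Delta_C$ is connected (Corollary~\ref{connected}) and the $\mathcal M_j$ are finitely many pairwise disjoint compact sets, $\bar\Delta_C$ lies entirely inside one of them, call it $\mathcal M_{j(C)}$. The assignment $C\mapsto j(C)$ is onto: given $j_0$, pick $g\in\mathcal M_{j_0}$; then $\omega(g)$ is a nonempty closed invariant subset of $\mathcal M_{j_0}$, and by condition~1 proved above together with connectedness of $\omega(g)$ it lies in some $\bar\Delta_C$, whence $\emptyset\neq\omega(g)\subseteq\mathcal M_{j_0}\cap\mathcal M_{j(C)}$ forces $j_0=j(C)$ and so $\bar\Delta_C\subseteq\mathcal M_{j_0}$. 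This is precisely the inclusion demanded by Definition~\ref{finer}. Finally, if the number of communicating classes exceeds $l$, the pigeonhole principle places two distinct lifts inside a single $\mathcal M_{j_0}$, making at least one inclusion $\bar\Delta_C\subsetneq\mathcal M_{j_0}$ proper; if the counts agree, $j(\cdot)$ is a bijection and one checks $\mathcal M_{j(C)}=\bar\Delta_C$ (any $g\in\mathcal M_{j(C)}$ has $\alpha(g),\omega(g)\subseteq\bar\Delta_C$, and the no-cycle clause for $\{\bar\Delta_C\}$ then forces $g\in\bar\Delta_C$), so the two decompositions coincide. In all cases $\{\bar\Delta_C\}$ is finer than or equal to $\{\mathcal M_j\}$.

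The step I expect to be the real obstacle is condition~1: pushing the limit-set description from the discrete shift on $\Omega$ through the $S^1\times\Omega$ encoding carefully enough to be sure that the sequence underlying an $\omega$-limit point lies in one communicating class and does not straddle several. After that, the minimality half is essentially soft --- connected sets cannot be split across disjoint compact pieces --- and leans only on Lemma~\ref{omegalimit} and Corollary~\ref{connected}.
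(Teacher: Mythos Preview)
Your argument is correct and, for the minimality half, rests on the same two ingredients as the paper's proof: Lemma~\ref{omegalimit} (existence of $f^*$ with $\omega(f^*)=\bar\Delta_C$) and Corollary~\ref{connected}. The paper, however, proceeds by contradiction rather than constructively: it supposes a strictly finer decomposition exists, so some Morse set $K$ is properly contained in some $\bar\Delta_C$; if only one such $K$ meets $\bar\Delta_C$, then $K\supseteq\omega(f^*)=\bar\Delta_C$, a contradiction, while if several disjoint compact $K_i$ lie in $\bar\Delta_C$, their union cannot equal the connected set $\bar\Delta_C$. Your direct route (each $\bar\Delta_C$ sits in a unique $\mathcal M_{j(C)}$, the assignment is onto, then count) is equivalent in content but yields the slightly stronger statement that $\{\bar\Delta_C\}$ actually refines \emph{every} Morse decomposition, not merely that nothing refines it.

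The more substantive difference is your first paragraph. You correctly observe that Theorem~\ref{morse1}, as written in the paper, checks only six of the seven conditions: it omits the verification that every $\omega$- and $\alpha$-limit set lies in $\bigcup_C\bar\Delta_C$. The paper's proof of Theorem~\ref{finest} tacitly assumes this has already been done. Your reduction to the discrete shift via the $S^1\times\Omega$ identification (from Lemma~\ref{deltabarcompact}) together with the corresponding result in \cite{discretesystems} is the natural way to close that gap, and it works. So your proposal is not only correct but patches a lacuna in the paper's exposition.
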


\begin{proof}
Suppose there exists a finer Morse decomposition.  Then, for some $C$, there exists a Morse set $K \subsetneq \bar{\Delta}_C$, a proper containment.  Suppose first that there is only one such $K$.  By the definition of a Morse set, $K$ must contain the $\omega$-limit sets of $\bar{\Delta}_C$.

But, by Lemma \ref{omegalimit}, there exists $f^* \in \bar{\Delta}_C$ such that $\omega(f^*) = \bar{\Delta}_C$.  Therefore, $\bar{\Delta}_C \subseteq K$, so $K$ is not a proper subset of $\bar{\Delta}_C$.

Now, suppose that for this finer Morse decomposition, there exist several Morse sets $K_1,...,K_n$ such that each $K_i \subsetneq \bar{\Delta}_C$.  By the definition of a Morse set, $K_1,...K_n$ are pairwise disjoint and compact.  But, since $\omega(f^*) = \bar{\Delta}_C$, we must have $\displaystyle \bigcup_{i=1}^nK_i = \bar{\Delta}_C$.  By Corollary \ref{connected}, $\bar{\Delta}_C$ is connected.  And, the union of finitely many, pairwise disjoint compact sets cannot be connected.  Thus, no finer Morse decomposition exists.

\end{proof}

\begin{defn}\label{sensitivedependence}
A flow $\Phi$ on a metric space $X$ has sensitive dependence on initial conditions if there exists $\delta > 0$ such that for every $x \in X$ and every neighborhood $B$ of $x$, there exists $y \in B$ and $t > 0$ such that $d(\Phi_t(x),\Phi_t(y)) > \delta$.
\end{defn}

\begin{defn}\label{chaotic}
A flow on a metric space $X$ is chaotic if it has sensitive dependence on initial conditions and is topologically transitive.
\end{defn}

\begin{lem}\label{sensdep}
Consider a graph $G$ consisting of a single communicating class $C$ for which the out-degree of at least one vertex is at least two.  Then, $\psi$ on $\bar{\Delta}$ has sensitive dependence on initial conditions.

\end{lem}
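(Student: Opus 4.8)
The plan is to verify Definition~\ref{sensitivedependence} with sensitivity constant $\tfrac12$ (any value $<1$ works), exploiting the two hypotheses on $G$ separately: being a single communicating class lets one route from any vertex to any other, while a vertex $v$ with $o(v)\ge 2$ supplies a genuine branch point where two admissible paths can split. Given $f\in\bar\Delta$ and a neighborhood $B$ of $f$, I would first fix $\epsilon>0$ with the open ball of radius $\epsilon$ about $f$ contained in $B$. From the definition of $d$ and the bound $f(\cdot,\cdot,i)\le 1$, two elements of $\bar\Delta$ that agree on the integer cells $i=-N,\dots,N$ are within $\sum_{|i|>N}4^{-|i|}=\tfrac{2}{3}4^{-N}$ of each other, so I pick $N$ with $\tfrac{2}{3}4^{-N}<\epsilon$. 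It then suffices to produce $g\in\bar\Delta$ agreeing with $f$ on cells $-N,\dots,N$ whose shift eventually separates by at least $1$ from that of $f$.

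For the construction of $g$, let $(f_i)_{i\in\mathbb{Z}}\in\Omega$ be the coordinate sequence of $f$. I would set $g_i:=f_i$ for all $i\le N$, so the central block matches and the left tail of $g$ is admissible because that of $f$ is. Since $G$ is a single communicating class, there is an admissible path $p=(f_N=p_0,p_1,\dots,p_\ell=v)$ to the branch vertex $v$. If $p_j\neq f_{N+j}$ for some $j$, take the least such $j$, set $g_{N+k}:=p_k$ for $1\le k\le j$, and put $M:=N+j$, so $g_M\neq f_M$. Otherwise $f$ itself sits at $v$ in cell $N+\ell$; since $o(v)\ge 2$ there is an edge $(v,w)\in E$ with $w\neq f_{N+\ell+1}$, and I set $g_{N+k}:=f_{N+k}$ for $k\le\ell$, $g_{N+\ell+1}:=w$, and $M:=N+\ell+1$, again with $g_M\neq f_M$. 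Finally, extend $g$ to the right of cell $M$ by any admissible path, possible since $G$ is an $N$-graph and so every vertex has out-degree at least $1$. Then $g\in\bar\Delta$, $g$ agrees with $f$ on cells $-N,\dots,N$ (hence $g\in B$), and $g_M\neq f_M$ with $M>N\ge 0$, so $M\ge 1$.

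Now set $t:=Mh>0$. The functions $\psi_t f=f(\cdot+t)$ and $\psi_t g$ have coordinate sequences shifted by $M$, so on the cell $[0,h)$ they are constant with the distinct values $f_M$ and $g_M$; hence $\delta(\psi_t f,\psi_t g,s)=1$ for all $s\in[0,h)$, and therefore $d(\psi_t f,\psi_t g)\ge f(\psi_t f,\psi_t g,0)=1>\tfrac12$. Since $f$ and $B$ were arbitrary, $\psi$ has sensitive dependence on initial conditions with constant $\tfrac12$.

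The only genuinely delicate point is the middle step: an arbitrary admissible continuation of the central block may simply reproduce $f$, so one must \emph{force} a difference. Routing to $v$ and then deliberately taking the out-edge of $v$ that $f$ does not take (this is exactly where $o(v)\ge 2$ is used) accomplishes this; the small case split is needed only because the routing path may already diverge from $f$ before reaching $v$, which is harmless. Everything else is the routine estimate on the tail of the series defining $d$ together with the observation that a single disagreeing cell already contributes $1$ to $d$.
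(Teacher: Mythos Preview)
Your proof is correct and follows essentially the same strategy as the paper's: fix $\delta=\tfrac12$, choose $N$ so the tail of the series defining $d$ is below $\varepsilon$, match $g$ with $f$ on cells $-N,\dots,N$, then use the communicating-class hypothesis to route toward the branch vertex and the out-degree $\ge 2$ hypothesis to force a disagreement in some cell $M>N$, giving $d(\psi_{Mh}f,\psi_{Mh}g)\ge 1$. The only cosmetic difference is the case split: the paper splits on whether $f$ ever revisits the branch vertex after cell $N$, whereas you split on whether your chosen routing path $p$ already diverges from $f$ before reaching $v$; your organization is arguably a bit cleaner, but the content is the same.
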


\begin{proof}
Take $\delta = \frac{1}{2}$.  Given $x \in \bar{\Delta}$, we construct a function $y \in \bar{\Delta}$ such that $x$ and $y$ are discontinuous at the same times mod $h$.  Given $\varepsilon > 0$, take $N$ large enough so that
$$\displaystyle \sum_{i=-\infty}^{-N}{\frac{1}{4^{|i|}}} + \displaystyle \sum_{i=N}^{\infty}{\frac{1}{4^{|i|}}} < \varepsilon.$$
Thus, taking $x(t) = y(t)$ on $t \in [-Nh,Nh]$ ensures that $d(x,y) < \varepsilon$.  Now, we just need to show that there exists $m>N \in \mathbb{R}$ so that $y(t) \neq x(t)$ for all $t \in [mh,(m+1)h)$.  This would imply that $d(\psi(mh,x),\psi(mh,y)) \geq 1 > \delta$.
To show that such an $m$ exists, let $\gamma_1$ denote the vertex with out-degree greater than one.  If there does not exist a $t > Nh$ such that $x(t) = \gamma_1$, then given $x(Nh) = \gamma_N$, let $y(t), t > Nh$ follow a path from $\gamma_N$ to $\gamma_1$.  Such a path must exist since $G$ consists of a single communicating class, so there exists a path between any two vertices in $G$.  Thus, we would have $x(t_0+t) \neq \gamma_1, y(t_0+t) = \gamma_1$ for some $t_0 > Nh, t \in [0,h)$, so we can take $m = \frac{t_0}{h}$. If there does exist a $t > Nh$ such that $x(t) = \gamma_1$, then define $\gamma_2 = x(t+h)$, and take $t_1$ so that $x(t_1+t)=\gamma_1$ for $t \in [0,h)$.  Since the out-degree of $\gamma_1$ is greater than one, there exists an edge from $\gamma_1$ to some other vertex, $\gamma_3$ (note that it is possible that either $\gamma_1 = \gamma_2$ or $\gamma_1 = \gamma_3$, but not that $\gamma_2 = \gamma_3$).  Set $y(t_1+h) = \gamma_3$, and take $m = \frac{t_1+h}{h}$.

\end{proof}

Applying Lemmas \ref{omegalimit} and \ref{sensdep} yields the following result.

\begin{thm}\label{chaos}
Consider a graph consisting of a single communicating class for which the out-degree of at least one vertex is greater than one.  Then, $\psi$ is chaotic on $\bar{\Delta}$.
\end{thm}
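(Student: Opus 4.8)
The plan is to combine the two preceding lemmas, after making one small structural observation. First I would argue that the hypothesis ``$G$ consists of a single communicating class $C$'' forces $C = V$: by Definition \ref{def 13}, $C$ is a maximal set of mutually communicating vertices, and since it is the only communicating class in $G$, every vertex lies in $C$. Consequently the lift $\bar{\Delta}_C = \{ f \in \bar{\Delta} \mid f(t) \in C \ \forall \, t \in \mathbb{R}\}$ coincides with all of $\bar{\Delta}$, because by the definition of $\bar{\Delta}$ every admissible function already takes values in $V = C$.

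With that identification in hand, I would apply Lemma \ref{omegalimit} to this class $C$: it yields $f^* \in \bar{\Delta}_C = \bar{\Delta}$ with $\omega(f^*) = \bar{\Delta}_C = \bar{\Delta}$. By Definition \ref{topologicallytransitive}, this is exactly the statement that $\psi$ is topologically transitive on $\bar{\Delta}$. Next I would invoke Lemma \ref{sensdep}, whose hypotheses (a single communicating class, some vertex of out-degree at least two) are precisely those of the theorem; it gives that $\psi$ on $\bar{\Delta}$ has sensitive dependence on initial conditions. Having verified both defining properties, Definition \ref{chaotic} lets me conclude that $\psi$ is chaotic on $\bar{\Delta}$.

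I do not expect a genuine obstacle here: the substantive work was already carried out in Lemmas \ref{omegalimit} and \ref{sensdep}, and the proof is essentially an assembly of citations. The only point that requires any care is the observation $\bar{\Delta}_C = \bar{\Delta}$ under the single-class hypothesis — if one skipped it, Lemma \ref{omegalimit} would only give topological transitivity on the (possibly proper) subsystem $\bar{\Delta}_C$, which would not suffice. Once that is noted, nothing else needs checking.
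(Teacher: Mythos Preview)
Your proposal is correct and matches the paper's approach exactly: the paper simply says ``Applying Lemmas \ref{omegalimit} and \ref{sensdep} yields the following result'' and states the theorem without further argument. Your explicit identification $\bar{\Delta}_C = \bar{\Delta}$ under the single-class hypothesis is a helpful clarification that the paper leaves implicit.
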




\section{The Deterministic Hybrid System}

Now that the behavior on $\Delta$ has been determined given a natural number $n$ and an $N$-graph on $n$ vertices, we consider the action of a function $f\in\Delta$ on a set of $n$ dynamical systems.

Consider an $N$-graph $G$ with $n$ vertices.  Take a collection of $n$ dynamical systems $\{\phi_1,...,\phi_n\}$ on a compact space $M \subset \mathbb{R}^d$, where each vertex of $G$ corresponds to one dynamical system $\phi_i$.  Take $f \in \bar{\Delta}$.  Define $\varphi(t,x,f):\mathbb{R} \times M \times \bar{\Delta} \rightarrow M$ by
$$
\varphi n(t,x,f) = \varphi_t(x,f)= \phi_{i_m}(\tau_{m}-\tau_{m-1},\phi_{i_{m-1}}(\tau_{m-1}-\tau_{m-2},...\phi_{i_{1}}(\tau_1,x))) 
$$
where the $\tau_k$'s satisfy $0 = \tau_0 < \tau_1 <...< \tau_m = t$ and $f(\tau) = i_j$ for $\tau \in [\tau_{j-1},\tau_j)$.

Thus, $\varphi(t,x,f)$ is given by the flow along the dynamical system $\phi_i$ during the period of time for which $f = i$.  With this, we can explicitly define our deterministic hybrid system.  Consider 

\begin{center}
$\Phi_t(x_0,f_0) \equiv \left (
\begin{array}{cc}
\varphi_t(x_0,f_0) \\
\psi_t(f_0)
\end{array} \right ) $
\end{center}

with initial conditions $f_0 \in \bar \Delta$, and $x_0\in M$.  Let $\psi_t(f_0) = f_t$ and notice that

\begin{center}
$\Phi_0(x_0,f_0) = \left (
\begin{array}{cc}
x_0 \\
f_0
\end{array} \right ) $
\end{center}

Then,
$$\Phi_{t+s}(x_0,f_0) 
= 
\left (
\begin{array}{cc}
\varphi_{t+s} (x_0, f_0) \\
f_{t+s}
\end{array} \right ) 
 = 
\left (
\begin{array}{cc}
\varphi_t (\varphi_s(x_0, f_0), f_s) \\
f_t \circ f_s
\end{array} \right ) 
= 
\Phi_t \circ \Phi_s(x_0,f_0) .$$
Thus, $\Phi_t$ is in fact a flow, so the deterministic hybrid system is a dynamical system.

\subsection{Limit Sets on $M \times \bar{\Delta}$}

Part 6 of Theorem \ref{morse1} yields the following corollary.

\begin{cor}\label{morse2}
Given $y \in M \times \bar{\Delta}$, the $\bar{\Delta}$ component of $\alpha(y)$ is contained in $\bar{\Delta}_C$ for some some communicating class $C$, and the $\bar{\Delta}$ component of $\omega(y)$ is contained in $\bar{\Delta}_{C'}$ for some some communicating class $C'$.
\end{cor}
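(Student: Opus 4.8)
The plan is to push the whole question down to the shift factor $\bar\Delta$ and then combine the Morse-set structure of $\psi$ with the standard fact that $\omega$- and $\alpha$-limit sets of points in a compact space are connected. Write $y=(x_0,f_0)$ with $x_0\in M$, $f_0\in\bar\Delta$. Since $\Phi_t(x_0,f_0)=(\varphi_t(x_0,f_0),\psi_t(f_0))$, the coordinate projection $\pi:M\times\bar\Delta\to\bar\Delta$ satisfies $\pi\circ\Phi_t=\psi_t\circ\pi$ for every $t$. Hence if $(z,g)$ is in the $\omega$-limit set of $y$ (for $\Phi$) and $t_k\to\infty$ is chosen with $\Phi_{t_k}(y)\to(z,g)$, then by continuity of $\pi$ we get $\psi_{t_k}(f_0)=\pi(\Phi_{t_k}(y))\to g$, so $g$ lies in the $\omega$-limit set $\omega_\psi(f_0)$ of $f_0$ under $\psi$. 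Thus the $\bar\Delta$-component of $\omega(y)$ is contained in $\omega_\psi(f_0)$, and symmetrically the $\bar\Delta$-component of $\alpha(y)$ is contained in $\alpha_\psi(f_0)$. It therefore suffices to prove that $\omega_\psi(f_0)$ is contained in a single lift $\bar\Delta_{C'}$, and $\alpha_\psi(f_0)$ in a single lift $\bar\Delta_{C}$.

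For this, note that the $\psi$-orbit of $f_0$ is precompact, since it lies in the compact space of Lemma \ref{deltabarcompact}; hence $\omega_\psi(f_0)$ is nonempty, compact, connected, and $\psi$-invariant (the connectedness being exactly the fact already invoked just before Corollary \ref{connected}). By Theorems \ref{morse1} and \ref{finest} the lifts $\{\bar\Delta_C\}$ of the communicating classes form a Morse decomposition for $\psi$; in particular condition 1 of Definition \ref{morsedecomp} applied to the point $f_0$ gives $\omega_\psi(f_0)\subseteq\bigcup_C\bar\Delta_C$, a finite union of pairwise disjoint compact sets. Inside this union each $\bar\Delta_C$ is therefore relatively clopen, so the connected set $\omega_\psi(f_0)$ must lie in exactly one of them; call it $\bar\Delta_{C'}$. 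Repeating the argument with $t_k\to-\infty$ produces a communicating class $C$ with $\alpha_\psi(f_0)\subseteq\bar\Delta_{C}$, and combining with the previous paragraph yields the corollary.

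I expect the only non-formal ingredients to be the two facts imported from earlier results, namely precompactness of orbits (Lemma \ref{deltabarcompact}), which makes $\omega_\psi(f_0)$ nonempty and connected, and the Morse-decomposition property (Theorems \ref{morse1}, \ref{finest}), which confines $\omega_\psi(f_0)$ to $\bigcup_C\bar\Delta_C$; after that the step that finishes the proof is the elementary observation that a connected subset of a finite disjoint union of closed sets is contained in one of them. One minor bookkeeping caveat worth stating in passing: $\psi_t$ moves $\bar\Delta$ outside itself when $t$ is not a multiple of $h$, so strictly speaking the limit sets above are taken in the compact space $\Delta$ of Lemma \ref{deltabarcompact}; this changes nothing, since each $\bar\Delta_C$ is still closed and $\psi$-invariant there and the lifts still constitute a Morse decomposition.
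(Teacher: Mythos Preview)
Your proof is correct and is essentially the argument the paper intends. The paper gives no proof beyond citing Theorem~\ref{morse1}, and you correctly supply the details the citation hides: the semiconjugacy $\pi\circ\Phi_t=\psi_t\circ\pi$ that reduces the question to the shift $\psi$, and the connectedness of $\omega_\psi(f_0)$ (together with the disjoint-closed-union structure of $\bigcup_C\bar\Delta_C$) that pins the limit set to a single $\bar\Delta_C$.
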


The implication of this corollary is that in order to find the $\alpha$/$\omega$-limit sets of $M \times \bar{\Delta}$, we only need study the $\alpha$/$\omega$-limit sets of trajectories whose states are contained entirely within one particular communicating class.

\begin{defn}\label{projection}
Given a set $X \subseteq M \times \bar{\Delta}$, we define the projection of $X$ onto $M$ by
$$
\pi_{M}(X) \equiv \{x \in M | \exists \, f \mbox{such that } (x,f) \in X \}
$$
Define the projection of $X$ onto $\bar{\Delta}$ by
$$
\pi_{\bar{\Delta}}(X) \equiv \{f \in \bar{\Delta} | \exists \, x \mbox{such that } (x,f) \in X \}
$$.
\end{defn}


Given a hybrid system whose graph $G$ contains some vertex $\gamma$, with corresponding dynamical system $\phi_\gamma$ on $M$, define $\omega_\gamma(x), x \in M$ to be the $\omega$-limit set of $x$ for the flow $\phi_\gamma$.  Define $\alpha_\gamma(x)$ to be the $\alpha$-limit set of $x$ for $\phi_\gamma$.

\begin{lem}\label{selfloop2}
If, given a vertex $\gamma\in V$ has an edge from and to itself, then there exists $ f \in \bar{\Delta}$ such that
$$\omega _\gamma (x) = \pi_{M}(\omega(x,f))$$
$$\alpha _\gamma (x) = \pi_{M}(\alpha(x,f))$$
for all $x \in M$.
\end{lem}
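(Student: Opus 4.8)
The plan is to take $f$ to be the constant function $f \equiv \gamma$ and show it does the job. The first step is to verify $f \in \bar\Delta$: since $\gamma$ carries an edge to and from itself, every transition $(f(ih),f((i+1)h)) = (\gamma,\gamma)$ is admissible, so $\{f(ih)\}_{i\in\mathbb Z}\in\Omega$, and $f$ is trivially piecewise constant on each $[nh,(n+1)h)$. Hence $f \in \bar\Delta$, and moreover it does not depend on $x$, which is what we need for the "for all $x$" quantifier.

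Next I would record two facts about the dynamics along this $f$. First, $f$ is a fixed point of $\psi$: $\psi_t(f)(s) = f(s+t) = \gamma = f(s)$, so $\psi_t(f)=f$ for all $t\in\mathbb R$. Second, unwinding the definition of $\varphi$ with the degenerate partition $0=\tau_0<\tau_1=t$ and $i_1=\gamma$ (there are no switching times, since $f$ never changes value) gives $\varphi_t(x,f) = \phi_\gamma(t,x)$. Combining these, the hybrid flow restricted to the slice $M\times\{f\}$ is simply $\Phi_t(x,f) = (\phi_\gamma(t,x),\,f)$, a faithful copy of the flow $\phi_\gamma$ on $M$.

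The limit-set identities then follow directly. A pair $(y,g)$ lies in $\omega(x,f)$ precisely when there is a sequence $t_k\to\infty$ with $\Phi_{t_k}(x,f) = (\phi_\gamma(t_k,x),f) \to (y,g)$; since the $\bar\Delta$-component is constantly $f$, this forces $g=f$ and $\phi_\gamma(t_k,x)\to y$, i.e.\ $y\in\omega_\gamma(x)$. Thus $\omega(x,f) = \omega_\gamma(x)\times\{f\}$ and $\pi_M(\omega(x,f)) = \omega_\gamma(x)$. Running the same argument with $t_k\to-\infty$ yields $\pi_M(\alpha(x,f)) = \alpha_\gamma(x)$. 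Because $f$ was fixed once and for all, both equalities hold for every $x\in M$.

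The only point that needs genuine attention — the modest "obstacle" here — is confirming that substituting the constant $f$ into the piecewise definition of $\varphi$ really collapses it to $\phi_\gamma(t,\cdot)$; this amounts to checking the trivial case $m=1$ of that definition and observing there is nothing happening at the (empty) set of switching times. Everything after that is immediate from the flow property of $\Phi$ and the definition of the $\omega$- and $\alpha$-limit sets.
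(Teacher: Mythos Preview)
Your proof is correct and follows exactly the same approach as the paper: take $f\equiv\gamma$, check that the self-loop makes this an element of $\bar\Delta$, and observe that the hybrid flow along this constant $f$ reduces to $\phi_\gamma$. The paper's proof is in fact much terser than yours --- it simply asserts that ``this corresponds to running $\phi_\gamma$ for all of time'' and that the projection of the limit sets is ``clearly'' that of $\phi_\gamma$ --- so your version is a faithful expansion of the same argument.
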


\begin{proof}
Since there is an edge from $\gamma$ to itself, the function in $f\in\bar{\Lambda}$ given by $f\equiv\gamma$ is an element of $\bar{\Delta}$.  This corresponds to running $\phi_\gamma$ for all of time.  Clearly, the projection of the limit sets of this system onto $M$ are the precisely those of $\phi_\gamma$.

\end{proof}

\begin{thm}\label{selfloopomega}
Consider a graph $G$ consisting of a single communicating class $C$ for which every vertex has an edge starting and ending at itself and a corresponding finite set of dynamical systems $\{\phi_i\}$, each of which has a Morse decomposition $\{ \mathcal{M}_1^i,\mathcal{M}_2^i,..., \mathcal{M}_{n_i}^i \}$.  Then, for all $x \in M$, there exists $f \in \bar{\Delta}$ such that
$$
\pi_{M}(\omega(x,f)) \cap (\displaystyle \bigcup_j \mathcal{M}_j^i) \neq \emptyset \, \forall \, i .
$$
\end{thm}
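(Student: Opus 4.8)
The plan is to construct a single $f\in\bar\Delta$ such that, for every $i$, the forward orbit of $(x,f)$ has an accumulation point (as $t\to\infty$) whose $M$-component lies in $A_i:=\bigcup_{j=1}^{n_i}\mathcal M_j^i$; any such accumulation point lies in $\omega(x,f)$, so its $M$-component lies in $\pi_M(\omega(x,f))\cap A_i$. Two standing facts make this possible. First, $A_i$ is compact (a finite union of compact sets), and by Condition~1 of Definition~\ref{morsedecomp} applied to the flow $\phi_i$, every $\omega$-limit set $\omega_i(q)$, $q\in M$, satisfies $\omega_i(q)\subseteq A_i$. Second, on the compact space $M$ one has $d_M(\phi_i(t,q),\omega_i(q))\to 0$ as $t\to\infty$ for each fixed $q$; otherwise some sequence $\phi_i(t_m,q)$ with $t_m\to\infty$ would stay a fixed distance from $\omega_i(q)$, while a further subsequence converges, necessarily into $\omega_i(q)$, a contradiction. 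Finally, because $C$ is a single communicating class containing all $n$ vertices and every vertex has a self-loop, I can fix for each $i$ (indices mod $n$) an admissible path from vertex $i$ to vertex $i+1$, and — in the spirit of Lemma~\ref{selfloop2} — I can keep $f$ equal to a given vertex for arbitrarily many consecutive time steps.

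I would then define $f$ on $[0,\infty)$ in rounds $k=1,2,\dots$: in round $k$, for $i=1,\dots,n$ in turn, hold $f\equiv i$ for $N_k^{(i)}$ time steps and then move along the fixed path to vertex $i+1$ (when $i=n$, back to vertex $1$); begin round $1$ with $f(0)=1$, and extend $f$ to $(-\infty,0)$ by any admissible bi-infinite extension, which exists since $G$ is an $N$-graph, so that $f\in\bar\Delta$. The dwell lengths $N_k^{(i)}\ge 1$ are chosen \emph{recursively}: once $f$ has been fixed up to the instant round~$k$ is about to hold at vertex $i$, the state $q_k^i\in M$ reached at that instant is determined, and I pick $N_k^{(i)}$ large enough that $d_M\bigl(\phi_i(N_k^{(i)}h,\,q_k^i),\,\omega_i(q_k^i)\bigr)<1/k$, then continue the construction. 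To conclude, fix $i$ and let $t_k$ be the time at the end of the $i$-block of round~$k$; these are multiples of $h$ and $t_k\to\infty$, since each round lasts at least time $nh$. The $M$-component of $\Phi_{t_k}(x,f)$ equals $p_k:=\phi_i(N_k^{(i)}h,q_k^i)$, so $d_M(p_k,A_i)\le d_M(p_k,\omega_i(q_k^i))<1/k$. Passing to a subsequence along which $\Phi_{t_{k_\ell}}(x,f)\to(y,g)$ in the compact space $M\times\bar\Delta$, we get $(y,g)\in\omega(x,f)$, hence $y\in\pi_M(\omega(x,f))$, while $d_M(y,A_i)=\lim_\ell d_M(p_{k_\ell},A_i)=0$, so $y\in A_i$ because $A_i$ is closed. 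Thus $y\in\pi_M(\omega(x,f))\cap A_i$, and since $i$ was arbitrary and $f$ does not depend on $i$, the theorem follows.

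The hard part — essentially the only delicate point — is the recursive choice of the $N_k^{(i)}$: the state $q_k^i$ at which $\phi_i$ is entered in round~$k$ depends on all earlier block lengths, and the time it takes $\phi_i(\cdot,q)$ to enter the $1/k$-neighborhood of $\omega_i(q)$ is \emph{not} uniform in $q\in M$ (already a source--sink flow on an interval forces ever longer passages through the midpoint as the starting point is taken nearer the source). Fixing $N_k^{(i)}$ only after $q_k^i$ is known removes this difficulty, and everything else is the compactness-and-closedness argument above together with the Morse containment $\omega_i(q)\subseteq A_i$. It is also worth keeping every block length an integer multiple of $h$, so that all transitions occur at multiples of $h$; this is what makes $f\in\bar\Delta$ and makes the times $t_k$ multiples of $h$, as used when shifting $f$.
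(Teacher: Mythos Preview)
Your proposal is correct and follows essentially the same strategy as the paper's proof: cycle through the vertices in rounds, at each vertex hold long enough (using the self-loop) to bring the $M$-component within a shrinking tolerance of that flow's Morse sets, then pass to a convergent subsequence. Your version is in fact a bit more careful than the paper's in insisting that block lengths be integer multiples of $h$ (so that $f\in\bar\Delta$), in extending $f$ to $(-\infty,0)$, and in flagging the non-uniformity of approach times in $q$, which is exactly why the dwell lengths must be chosen recursively.
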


\begin{proof}
 Given any $x_{00}$, any $\epsilon > 0$, and any $\phi_1$, there exists a time $t_{10}$ such that $\phi_1(t_{10},x_{00})$ is within $\varepsilon$ of an $\omega$-limit set of $\phi_1$ by definition of $\omega$-limit sets.  Since the Morse sets of $\phi_1$ contain the $\omega$-limit sets of $\phi_1$, being within $\epsilon$ of an $\omega$-limit set implies being within $\varepsilon$ of a Morse set.  So, let $f(t) = $ $\gamma_1$ for $t \in [0, t_{10})$, and let $\phi_1(t_{10},x_{00}) = x_{10}$.
Now, consider a second dynamical system $\phi_2$, represented in $G$ by the vertex $\gamma_2$.  Since the graph consists of a single communicating class, it is possible to transition from vertex $\gamma_1$ to vertex $\gamma_2$ in a finite time.  Again, pick $t_{20}$ so that $\phi_2(t_{20},x_{10}) \equiv x_{20}$ is within $\varepsilon$ of an $\omega$-limit set, hence Morse set, of $\phi_2$.  Iterate this process for all $n$ vertices in the communicating class, yielding a point $\phi_n(t_{n0},x_{(n-1)0}) \equiv x_{n0}$.

Next, repeat the above procedure for $\frac{\epsilon}{2}$.  That is, find $t_{11}$ such that $\phi_1(t_{11},x_{n0}) \equiv x_{11}$ is within $\frac{\epsilon}{2}$ of a Morse set of $\phi_1$, $t_{21}$ such that $\phi_2(t_{21},x_{11})$ is within $\frac{\epsilon}{2}$ of a Morse set of $\phi_2$, etc.  Repeat for $\frac{\epsilon}{4}, \frac{\epsilon}{8}, ...$.  In the limit, for each vertex $\gamma_i$, we get a sequence of points $\{x_{ij}\}, j = 0, 1, 2, ...$ such that $x_{ij}$ is within $\frac{\epsilon}{2^j}$ of a Morse set of $\phi_i$.  Since there are only finitely many Morse sets for each $\phi_i$, there exists a subsequence $\{x_{ij_k}\}$ of $\{x_{ij}\}$ converging to one particular Morse set $\mathcal{M}_l^i$ of $\phi_i$.  Thus, $\pi_{M}(\omega(x_{00},f)) \cap  \mathcal{M}_l^i \neq \emptyset .$

\end{proof}

Clearly, the equivalent theorem applies to $\alpha$-limit sets, going backwards in time.  The example of section $6.1$ demonstrates that the existence of self loops is necessary for this theorem to hold.

\subsection{Morse Sets on $M \times \bar{\Delta}$}

The following definitions and theorem come from \cite{Kliemann}.  Suppose $X$ is a compact space with associated flow $\Phi$.

\begin{defn}\label{epsilontchain}
Given $x,y \in X$ and $\epsilon, T > 0$, an $(\epsilon,T)$-chain from $x$ to $y$ is given by a natural number $n \in \mathbb{N}$ together with points
$$
x_0 = x,x_1,...,x_n=y \in X \mbox{ and times } T_0,...T_{n-1} \geq T
$$
such that $d(\Phi(T_i,x_i),x_{i+1})<\epsilon$ for $i = 0,1,...,n-1$.
\end{defn}

\begin{defn}\label{chainrecurrent}
A point $x \in X$ is chain recurrent if for all $\epsilon, T >0$ there exists an $(\epsilon,T)$-chain from $x$ to $x$.  A subset $Y \subseteq X$ is chain transitive if for all $x,y \in Y$ and all $\epsilon, T > 0$, there exists an $(\epsilon,T)$-chain from $x$ to $y$.
\end{defn}

\begin{thm}\label{chainrecurrentmorse}
Suppose $X$ has a finest Morse decomposition $\{\mathcal{M}_1,...,\mathcal{M}_n\}$.  Then, $x$ is chain recurrent for all $x \in \mathcal{M}_i$, and each $\mathcal{M}_i$ is connected.
\end{thm}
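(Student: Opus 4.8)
Since this statement is the classical Conley-theoretic characterization of the finest Morse decomposition, one legitimate option is simply to cite \cite{Kliemann}; but here is the line of argument I would record. The plan is to prove both assertions at once by showing that each Morse set $\mathcal{M}_i$ of the finest decomposition is \emph{chain transitive} for the restricted flow $\Phi|_{\mathcal{M}_i}$. Chain recurrence of every point of $\mathcal{M}_i$ then follows immediately, since a chain lying inside $\mathcal{M}_i$ is in particular a chain in $X$; and connectedness follows by a short separation argument using only the continuity of the flow.

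\textbf{Step 1: every $\mathcal{M}_i$ is chain transitive.} Suppose, toward a contradiction, that some $\mathcal{M}_{i_0}$ is not chain transitive for $\Phi|_{\mathcal{M}_{i_0}}$. I would invoke the Conley dichotomy (available from \cite{Kliemann}): a compact invariant set is chain transitive if and only if its only attractors are the empty set and the whole set. Hence $\mathcal{M}_{i_0}$ admits a nontrivial attractor $A$, i.e. $\emptyset \neq A \subsetneq \mathcal{M}_{i_0}$, together with its complementary repeller $A^{*}$, so that $\{A, A^{*}\}$ is a two-element Morse decomposition of $\mathcal{M}_{i_0}$. Replacing $\mathcal{M}_{i_0}$ by $A$ and $A^{*}$ in the family $\{\mathcal{M}_1,\dots,\mathcal{M}_n\}$ produces a new finite collection of nonvoid, pairwise disjoint, invariant, compact sets; each of $A$ and $A^{*}$ is isolated in $X$ because $\mathcal{M}_{i_0}$ is isolated in $X$ and they are isolated in $\mathcal{M}_{i_0}$ (an isolating neighbourhood in $\mathcal{M}_{i_0}$ extends to one in $X$ after intersecting with the interior of an isolating neighbourhood of $\mathcal{M}_{i_0}$); the limit-set clause of Definition \ref{morsedecomp} holds since for any $x$ whose $\omega$-limit (resp. $\alpha$-limit) meets $\mathcal{M}_{i_0}$ one has $\omega(x)\subseteq \mathcal{M}_{i_0}$, and an $\omega$-limit set is chain transitive and hence contained in one of $A$, $A^{*}$; and the no-cycle clause is inherited from the no-cycle structure of $\{\mathcal{M}_i\}$ together with that of $\{A,A^{*}\}$. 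Thus this new family is a Morse decomposition strictly finer, in the sense of Definition \ref{finer}, than $\{\mathcal{M}_i\}$ (the containment $A\subsetneq\mathcal{M}_{i_0}$ being proper), contradicting finestness. Therefore every $\mathcal{M}_i$ is chain transitive, and in particular each of its points is chain recurrent.

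\textbf{Step 2: connectedness.} Fix $i$ and suppose $\mathcal{M}_i = A \sqcup B$ with $A,B$ nonempty and relatively clopen in $\mathcal{M}_i$; set $\rho = \mathrm{dist}(A,B) > 0$. Since $A$ and $B$ are then unions of connected components of $\mathcal{M}_i$, while every orbit $\{\Phi(t,x):t\in\mathbb{R}\}$ is a connected subset of $\mathcal{M}_i$, the sets $A$ and $B$ are each invariant under $\Phi|_{\mathcal{M}_i}$. Pick $a\in A$, $b\in B$ and $\epsilon<\rho$; by Step 1 there is an $(\epsilon,1)$-chain $a = x_0,x_1,\dots,x_N = b$ lying in $\mathcal{M}_i$ (Definition \ref{chainrecurrent}). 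Let $j$ be the least index with $x_{j+1}\in B$; then $x_j\in A$, so $\Phi(T_j,x_j)\in A$ by invariance of $A$, whereas $d(\Phi(T_j,x_j),x_{j+1})<\epsilon<\rho$ forces $\Phi(T_j,x_j)\in B$ — a contradiction. Hence $\mathcal{M}_i$ is connected.

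The main obstacle is Step 1, and inside it the only substantive input is the Conley dichotomy "not chain transitive $\Rightarrow$ proper attractor $\Rightarrow$ nontrivial Morse decomposition"; the verification that splicing $\{A,A^{*}\}$ into the ambient family again satisfies all seven defining properties of a Morse decomposition is routine bookkeeping, with isolatedness-in-$X$ and the no-cycle clause being the points that most need care. Step 2, by contrast, is elementary once one notices that a relatively clopen subset of an invariant set is itself invariant for a flow. If one prefers to stay wholly self-contained, Step 1 could be replaced by the direct argument that the chain transitive components of the chain recurrent set form the (necessarily unique) finest Morse decomposition, but that route requires reproving essentially the same dichotomy.
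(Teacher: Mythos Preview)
The paper does not prove this theorem at all: it is imported verbatim from \cite{Kliemann}, as announced in the sentence ``The following definitions and theorem come from \cite{Kliemann}.''  So your opening remark that ``one legitimate option is simply to cite \cite{Kliemann}'' is precisely what the paper does, and nothing more.

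Your written-out argument is the standard Conley-theoretic one and is correct.  A couple of minor remarks on presentation: in Step~2 you should be explicit that the $(\epsilon,1)$-chain is taken for the \emph{restricted} flow $\Phi|_{\mathcal{M}_i}$ (this is what you proved in Step~1), so that all the intermediate points $x_j$ lie in $\mathcal{M}_i$ --- otherwise the invariance argument for $A$ does not apply to them.  In Step~1, the verification that splicing $\{A,A^*\}$ into the ambient decomposition preserves isolatedness and the no-cycle condition is, as you say, routine but not entirely trivial; in a fully self-contained write-up you would want to spell out the no-cycle check a bit more (a hypothetical cycle through $A$ and $A^*$ would project to a cycle through $\mathcal{M}_{i_0}$ in the original decomposition unless it stays inside $\mathcal{M}_{i_0}$, and the latter is ruled out by the attractor--repeller structure).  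None of this is a gap; it is just the usual bookkeeping.
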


With this machinery, we are in a position to define a new term and state a theorem regarding the hybrid system.

\begin{defn}\label{attracting region}
Given a compact set $A \subset X$, we call $A$ an attracting region for a flow $\Phi$ if there exists an open neighborhood $N(A)$ of $A$ such that $\omega(N(A)) \subseteq A$.  We call a compact set $R$ a repelling region for $\Phi$ if there exists an open neighborhood $N(R)$ such that $\alpha(N(R)) \subseteq R$.
\end{defn}

\begin{thm}\label{alwaysattracting}
Suppose that $A \subsetneq M$ is an attracting region for every individual dynamical system $\phi_i, i \in \{1,...,n\}$ of a hybrid system.  Then, there exists a non-trivial Morse decomposition on $M \times \Delta$.  Furthermore, if there exists a finest Morse decomposition on $M \times \Delta$, then it contains a Morse set $\mathcal{M}_A$ such that $\pi_{M}(\mathcal{M}_A) \subseteq A$.
\end{thm}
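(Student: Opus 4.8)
The plan is to promote $A$ to an attracting region for the hybrid flow $\Phi$ and then run the standard attractor--repeller construction of \cite{Kliemann}. First I would set $N(A):=\bigcap_{i=1}^{n}N_i(A)$, where $N_i(A)$ is an attracting neighborhood of $A$ for $\phi_i$; this is still an open neighborhood of $A$ in $M$. The goal of this step is to show that $N(A)\times\Delta$ (after possibly shrinking it) is an attracting neighborhood for $\Phi$ on the compact space $M\times\Delta$ (compact by Lemma \ref{deltabarcompact} and compactness of $M$), i.e. that $\omega\bigl(N(A)\times\Delta\bigr)\subseteq A\times\Delta$. The intuition is that a hybrid trajectory $\varphi(t,x,f)$ is a concatenation of arcs of the flows $\phi_i$, each of which drives points of $N(A)$ toward $A$, so the $M$-coordinate should be pushed into every neighborhood of $A$ and then kept there.

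Granting that, write $\mathcal{A}:=\omega\bigl(N(A)\times\Delta\bigr)$ and let $\mathcal{A}^{*}:=\{(x,f)\in M\times\Delta:\omega(x,f)\cap\mathcal{A}=\emptyset\}$ be the complementary repeller. The pair $\{\mathcal{A},\mathcal{A}^{*}\}$ satisfies Definition \ref{morsedecomp}: both sets are invariant, isolated, compact and nonvoid; every $\omega$- and $\alpha$-limit set lies in $\mathcal{A}\cup\mathcal{A}^{*}$; and there is no cycle between an attractor and its complementary repeller. It is nonvoid on both sides because $\mathcal{A}$ is the $\omega$-limit of a nonempty set in a compact space, while $\mathcal{A}\subseteq A\times\Delta\neq M\times\Delta$ (here we use that $A\subsetneq M$ is compact, so $A\times\Delta$ is a proper closed subset) forces $\mathcal{A}^{*}\neq\emptyset$. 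Hence $\{\mathcal{A},\mathcal{A}^{*}\}$ is a non-trivial Morse decomposition of $M\times\Delta$, which is the first assertion.

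For the second assertion, suppose $M\times\Delta$ admits a finest Morse decomposition $\{\mathcal{M}_1,\dots,\mathcal{M}_k\}$. Since a finest Morse decomposition is a common refinement of all Morse decompositions (cf. \cite{Kliemann} and Definition \ref{finer}), applying this to $\{\mathcal{A},\mathcal{A}^{*}\}$ produces some $\mathcal{M}_A:=\mathcal{M}_j$ with $\mathcal{M}_A\subseteq\mathcal{A}$. Then $\pi_{M}(\mathcal{M}_A)\subseteq\pi_{M}(\mathcal{A})\subseteq A$, as desired.

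I expect essentially all of the difficulty to be concentrated in the first step. The subtlety is that even though each $\phi_i$ contracts a neighborhood of $A$ toward $A$, between two switching times a hybrid trajectory can bulge away from $A$, and a finite intersection of attracting neighborhoods need not be forward invariant under any single $\phi_i$; this is precisely the mechanism by which switched systems built from individually stable subsystems can fail to be stable. Making the first step rigorous therefore seems to require producing a neighborhood of $A$ that behaves well under all of the $\phi_i$ simultaneously --- for instance a common forward-invariant compact neighborhood, or equivalently a common Lyapunov-type function for $A$ --- extracted from the finiteness of the index set and the compactness of $M$, and then checking carefully that the trajectory cannot escape it during a switch. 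Everything after that is bookkeeping on top of the Conley-theoretic facts already imported from \cite{Kliemann}.
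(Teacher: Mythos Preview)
Your plan is correct and runs parallel to the paper's argument, but the two diverge after the common first step.

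Both you and the paper set $N(A)=\bigcap_{i}N_i(A)$ and both need the conclusion that $A\times\Delta$ is an attracting region for the hybrid flow $\Phi$, i.e.\ $\omega\bigl(N(A)\times\Delta\bigr)\subseteq A\times\Delta$. You flag this as the crux and worry about switched-system pathologies; the paper simply asserts it as ``clear'' and moves on. So the step you identify as the hard one is precisely the step the paper does \emph{not} justify in detail --- your caution is reasonable, but you should know that the paper's proof does not supply the common-Lyapunov or common-forward-invariant-neighborhood argument you anticipate needing.

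Once that is granted, the routes differ. The paper does \emph{not} build the attractor--repeller pair directly. Instead it picks $x\in N(A)\setminus A$, sandwiches $A\subset B\subset C\subset N(A)$ with $x\in C\setminus B$, and uses a compactness/sequence argument (on $\partial C\times\Delta$ and $\partial B$) to extract a uniform absorption time $T$ into $B$; this shows $(x,f)$ is not chain recurrent for any $f$, and then Theorem~\ref{chainrecurrentmorse} rules out the trivial Morse decomposition. Your attractor--repeller construction is the more standard Conley-theoretic packaging of the same content and is arguably cleaner, but it leans on the imported fact that an attractor and its complementary repeller always form a Morse decomposition, whereas the paper's route is more self-contained given only Theorem~\ref{chainrecurrentmorse}.

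For the second assertion the arguments again differ: the paper reuses the non-chain-recurrence of $(N(A)\setminus A)\times\Delta$ together with connectedness of finest Morse sets (Theorem~\ref{chainrecurrentmorse}) to trap some $\mathcal{M}_A$ inside $A$. Your refinement argument --- the finest Morse decomposition refines $\{\mathcal{A},\mathcal{A}^{*}\}$, so some $\mathcal{M}_j\subseteq\mathcal{A}\subseteq A\times\Delta$ --- is shorter and equally valid, provided you are willing to cite from \cite{Kliemann} that a finest Morse decomposition refines every Morse decomposition.
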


\begin{proof}
Let $N(A) \equiv \displaystyle \bigcap_{i=1}^l{N_i(A)} \subset M$ be the intersection of the neighborhoods $N_i(A)$ around $A$ admitted by each of the flows $\phi_i$, and note that $N(A)$ is also an open neighborhood of $A$.  Clearly, if $A \subsetneq M$ is an attracting region for each flow $\phi_i$, then $A \times\Delta$ is an attracting region for the hybrid system flow $\Phi$, as we can take $N(A) \times \Delta$ to be the open neighborhood around $A \times \Delta$.

Given $x \in N(A)$, let $B$ be some closed neighborhood containing $A$ but contained in $N(A)$ so that $x \notin B$.  Let $C$ be some closed neighborhood properly containing $B$ but contained in $N(A)$ so that $x \in C$.  Let $\partial B$ and $\partial C$ denotes the boundaries of $B$ and $C$, respectively.  Note that $\partial B$ and $\partial C$ contain their boundaries and are bounded since $M$ is bounded, so that $\partial B$ and $\partial C$ are both compact.

We want to show that there exists a time $T$ such that $\varphi_T(y,f) \in B$ for all $y \in \partial C, f \in \Delta$.  Suppose not.  Then, for each $n \in \mathbb{N}$, there exists $t_n > n, x_n \in \partial C, f_n \in \Delta$ such that $\varphi_{t_n}(x_n,f_n) \in \partial B$.  This gives rise to sequences $\{t_n\}$, $\{x_n\}$, $\{ f_n\}$, and $\{\varphi_{t_n}(x_n,f_n)\}$ with $\{t_n\} \rightarrow \infty$.  By compactness of $\partial C$, $\Delta$, and $\partial B$, the sequences $\{x_n\}$, $\{ f_n\}$, and $\{\varphi_{t_n}(x_n,f_n)\}$ have convergent subsequences, so that there exists $\{ n_j \}$ with $\{t_{n_j}\} \rightarrow \infty$, $\{x_{n_j}\} \rightarrow \hat{x}$, $\{x_{n_j}\} \rightarrow \hat{x} \in \partial C, \{f_{n_j}\} \rightarrow f \in\Delta,$ and $\{\varphi_{t_{n_j}}(x_{n_j},f_{n_j})\} \rightarrow \bar{x} \in \partial B$.  But then, $\bar{x} \in \pi_{M}(\omega(N(A) \times \Delta))$.  Since $\bar{x} \in \partial B \subset N(A) \setminus A$, this contradicts the definition of an attracting region.

So, such a $T$ exists.  Take $\epsilon < $ min($d(x,B),d(C,B)$).  Since any trajectory starting in $C \setminus B$ will flow into $B$ by time $T$, and since the distance from any point in $B$ to $x$ is greater than $\epsilon$, no $(\epsilon,T)$-chain from $x$ to itself exists.  Therefore, $(x,f)$ is not chain recurrent for any $f \in \bar{\Delta}$.  But, if the only Morse decomposition were trivial, then all of $M \times \bar{\Delta}$ would be the only Morse set, hence chain recurrent by Theorem \ref{chainrecurrentmorse}.  This is a contradiction.  Thus, there exists a non-trivial Morse decomposition.

If it is known that a finest Morse decomposition exists on $M \times \bar{\Delta}$, then the fact that any solution beginning in $N(A)$ flows into $A$ and remains there implies that the intersection of $A$ with the projection of some limit set onto $M$ is nonempty.  Since $(N(A) \setminus A) \times \bar{\Delta}$ is not chain recurrent, as shown, $(N(A) \setminus A) \times \bar{\Delta}$ cannot be contained in a Morse set.  So, there exists a Morse set $\mathcal{M}_A$ such that $\pi_{M}(\mathcal{M}_A) \cap A \neq \emptyset$, $\pi_{M}(\mathcal{M}_A) \cap (N(A) \setminus A) = \emptyset$.  Since the Morse sets of a finest Morse decomposition are connected, this means that $\pi_{M}(\mathcal{M}_A) \subseteq A$.

\end{proof}

By the same argument, going backwards in time, we get the same result for repelling regions.

\begin{cor}\label{alwaysrepelling}
Suppose that $R \subsetneq M$ is a repelling region for every individual dynamical system $\phi_i, i \in \{1,...,l\}$ of a hybrid system.  Then, there exists a non-trivial Morse decomposition on $M \times \bar{\Delta}$.  Furthermore, if there exists a finest Morse decomposition on $M \times \bar{\Delta}$, then it contains a Morse set $\mathcal{M}_R$ such that $\pi_{M}(\mathcal{M}_R) \subseteq R$.
\end{cor}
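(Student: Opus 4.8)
The plan is to deduce the statement from Theorem~\ref{alwaysattracting} by reversing time and then transporting the resulting Morse decomposition back. First I would record the elementary fact that, for a two-sided flow $\Psi$ on a compact metric space, a finite collection of sets is a Morse decomposition for $\Psi$ if and only if it is one for the reversed flow $\Psi^-_t := \Psi_{-t}$: invariance, isolation and compactness are manifestly time-symmetric, in Definition~\ref{morsedecomp} condition (1) is symmetric because $\alpha_{\Psi^-}(x)=\omega_\Psi(x)$ and $\omega_{\Psi^-}(x)=\alpha_\Psi(x)$, and condition (2) (absence of cycles) is symmetric because a cycle $\mathcal{M}_0=\mathcal{M}_l$ realized by points $x_1,\dots,x_l$ for $\Psi$ becomes, read in the opposite order, a cycle for $\Psi^-$. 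In particular the existence and the identity of a \emph{finest} Morse decomposition are unaffected by time reversal.

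Next I would identify the reversed hybrid flow. Since $\Phi_t$ is a two-sided flow on $M\times\bar{\Delta}$, put $\Phi^-_t:=\Phi_{-t}$. Its $\bar{\Delta}$-component $\psi_{-t}$ is again a shift, and its $M$-component flows backward along the same sequence of vector fields; replacing each $\phi_i$ by $\phi_i^-(t,\cdot):=\phi_i(-t,\cdot)$ and reversing the orientation of the underlying $N$-graph $G$ to $G^{op}$ (again an $N$-graph, since reversing edges swaps in- and out-degrees), $\Phi^-$ is conjugate to the forward hybrid system built from $G^{op}$ and $\{\phi_i^-\}$ via the time-reversal isometry $f\mapsto(t\mapsto f(-t))$ of $\bar{\Delta}$ onto the corresponding space for $G^{op}$ — the continuous-time analogue of the sequence-reversal bijection of shift spaces, which respects the metric $d$ because $d$ is built from integrals of $\delta$. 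The hypothesis then transfers verbatim: $R$ being a repelling region for $\phi_i$ with neighborhood $N_i(R)$ (so $\alpha_{\phi_i}(N_i(R))\subseteq R$) is exactly $R$ being an attracting region for $\phi_i^-$ with the same neighborhood, since $\omega_{\phi_i^-}(N_i(R))=\alpha_{\phi_i}(N_i(R))\subseteq R$. Hence $R\subsetneq M$ is an attracting region for every dynamical system of the $G^{op}$-hybrid system, Theorem~\ref{alwaysattracting} applies, and it produces a non-trivial Morse decomposition on the $G^{op}$-space and, should a finest one exist, a Morse set $\mathcal{M}$ with $\pi_{M}(\mathcal{M})\subseteq R$. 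Pulling these back along the conjugacy and then applying the time-reversal principle of the first paragraph with $\Psi=\Phi$ yields the corresponding objects for $\Phi$ on $M\times\bar{\Delta}$, with the projection condition $\pi_{M}(\mathcal{M}_R)\subseteq R$ unchanged.

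As an alternative I would mirror the proof of Theorem~\ref{alwaysattracting} directly with all times negated: pick $x\in N(R)\setminus R$ and nested closed neighborhoods $R\subset B\subset C\subset N(R)$ with $x\in C\setminus B$; by a compactness argument using compactness of $\partial C$, $\partial B$ and $\bar{\Delta}$, obtain a uniform $T$ with $\varphi_{-T}(y,f)\in B$ for all $y\in\partial C$, $f\in\bar{\Delta}$ (otherwise a limiting backward trajectory would put a point of $\partial B\subseteq N(R)\setminus R$ into $\pi_{M}(\alpha(N(R)\times\bar{\Delta}))$, contradicting that $R\times\bar{\Delta}$ is repelling for $\Phi$); conclude that for $\epsilon<\min(d(x,B),d(C,B))$ no $(\epsilon,T)$-chain runs from $(x,f)$ to itself, so $(x,f)$ is not chain recurrent, so by Theorem~\ref{chainrecurrentmorse} the trivial decomposition is impossible; and in the finest case, since $(N(R)\setminus R)\times\bar{\Delta}$ lies in no Morse set, connectedness of the Morse sets of a finest decomposition forces a Morse set $\mathcal{M}_R$ with $\pi_{M}(\mathcal{M}_R)\subseteq R$.

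The one genuinely non-routine point, common to both routes, is the justification that $R\times\bar{\Delta}$ is a repelling region for the hybrid flow $\Phi$, i.e. $\alpha(N(R)\times\bar{\Delta})\subseteq R\times\bar{\Delta}$ (equivalently, in the first route, that the $G^{op}$-hybrid system really has $R$ as an attracting region for each component, together with the legitimacy of the graph/time-reversal identification). This is the exact backward analogue of the step asserted as ``clearly'' in the proof of Theorem~\ref{alwaysattracting} and is handled the same way: a backward $\Phi$-orbit starting in $N(R)=\bigcap_i N_i(R)$ is a concatenation of backward $\phi_i$-orbit segments, each eventually confined near $R$ by the repelling hypothesis, so that the switching driven by the shift composes only such maps and the $M$-component of the orbit can accumulate only on $R$. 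I expect writing this confinement argument carefully (and dealing with the boundary case $N(R)=R$, as in the original) to be the main effort; everything else is bookkeeping through the conjugacy and the time-reversal symmetry of Morse decompositions.
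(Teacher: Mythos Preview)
Your proposal is correct and matches the paper's approach: the paper's entire justification for Corollary~\ref{alwaysrepelling} is the single sentence ``By the same argument, going backwards in time, we get the same result for repelling regions,'' and your second route (mirroring the proof of Theorem~\ref{alwaysattracting} with times negated) is exactly that. Your first route via the time-reversal conjugacy and $G^{op}$ is more machinery than the paper invokes, but it is a legitimate and arguably cleaner way to make precise what ``the same argument backwards'' means, since it reduces the corollary to a black-box application of Theorem~\ref{alwaysattracting} plus the time-symmetry of Morse decompositions, rather than re-running the compactness/chain-recurrence argument by hand.
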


Theorem \ref{alwaysattracting} and Corollary \ref{alwaysrepelling} have important ramifications for hybrid systems that consist of several, slightly-perturbed individual dynamical systems.  If each of these systems has an attracting or repelling fixed point in some neighborhood, then it is likely that some attracting or repelling region will exist that contains all of them.  We have shown that the existence of such a region implies a non-trivial Morse decomposition of the space.

Suppose now that the graph $G$ consists of a single communcating class, $C$, so that $\bar{\Delta} = \bar{\Delta}_C$.

\begin{defn}\label{attractorrepeller}
A compact invariant set $A$ is an attractor if it admits an open neighborhood $N$ such that $\omega(N) = A$.  A repeller is a compact invariant set $R$ that has an open neighborhood $N^*$ such that $\alpha(N^*) = R$.

\end{defn}

Note that any attractor or repeller must be contained in a Morse set, since Morse sets contain $\alpha$- and $\omega$-limit sets.  Furthermore, it can be shown that for any Morse decomposition, at least one Morse set is an attractor and at least one Morse set is a repeller \cite{Kliemann}.
 
\begin{thm}\label{allofdeltabar}
Suppose that an attractor $A \subseteq M \times \bar{\Delta}$ is a Morse set for some Morse decomposition.  Then, $\pi_{\bar{\Delta}}(A) = \bar{\Delta}$.
\end{thm}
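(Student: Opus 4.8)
The plan is to exploit the fact that $A$ is an attractor, hence invariant, together with topological transitivity of $\psi$ on $\bar\Delta=\bar\Delta_C$ (Lemma~\ref{omegalimit}). First I would show that $\pi_{\bar\Delta}(A)$ is a nonempty, closed, $\psi$-invariant subset of $\bar\Delta$. Nonemptiness is immediate since attractors are non-void. Closedness follows from compactness of $A$ (it is a Morse set) together with continuity of the projection $\pi_{\bar\Delta}$, which is continuous because the metric on $M\times\bar\Delta$ is the product metric. Invariance under $\psi_t$ follows from invariance of $A$ under $\Phi_t$: if $(x,f)\in A$ then $\Phi_t(x,f)=(\varphi_t(x,f),\psi_t(f))\in A$, so $\psi_t(f)\in\pi_{\bar\Delta}(A)$; running $t$ over all of $\mathbb R$ gives full invariance of $\pi_{\bar\Delta}(A)$.

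Next, the key step: a nonempty closed $\psi$-invariant subset of $\bar\Delta$ that contains an $\omega$-limit set of a transitive point must be all of $\bar\Delta$. Pick any $(x_0,f_0)\in A$. Since $A$ is invariant and closed, $\omega(x_0,f_0)\subseteq A$, so $\pi_{\bar\Delta}(\omega(x_0,f_0))\subseteq\pi_{\bar\Delta}(A)$. The obstacle here — and I expect this to be the main one — is that $\pi_{\bar\Delta}(\omega(x_0,f_0))$ need not literally equal $\omega(f_0)$ in the $\bar\Delta$-factor; a priori it could be smaller, since convergence of $\Phi(t_k,(x_0,f_0))$ requires convergence in \emph{both} coordinates simultaneously. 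I would resolve this by the standard compactness argument: given any $g\in\omega(f_0)$ in the $\bar\Delta$-factor, there are $t_k\to\infty$ with $\psi(t_k,f_0)\to g$; by compactness of $M$ the points $\varphi(t_k,x_0,f_0)$ have a convergent subsequence $\to x^*$, so $(x^*,g)\in\omega(x_0,f_0)\subseteq A$, whence $g\in\pi_{\bar\Delta}(A)$. Thus $\omega(f_0)\subseteq\pi_{\bar\Delta}(A)$ (identifying the $\bar\Delta$-factor with $\bar\Delta$).

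Finally, I would invoke Proposition~\ref{dense}: the set of $f^*\in\bar\Delta_C$ with $\omega(f^*)=\bar\Delta_C$ is dense in $\bar\Delta_C=\bar\Delta$. If $f_0$ happens to be such a transitive point, then $\bar\Delta=\omega(f_0)\subseteq\pi_{\bar\Delta}(A)$ and we are done. In general, since $\pi_{\bar\Delta}(A)$ is $\psi$-invariant and closed, and since for \emph{any} $f_0$ appearing as the $\bar\Delta$-coordinate of a point of $A$ we have shown $\omega(f_0)\subseteq\pi_{\bar\Delta}(A)$, it suffices to produce \emph{one} point of $A$ whose $\bar\Delta$-coordinate is transitive. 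Because $A$ is an attractor with open neighborhood $N$ satisfying $\omega(N)=A$, and $N$ is open in $M\times\bar\Delta$, its projection contains an open subset of $\bar\Delta$, which by density meets the transitive points; for such a transitive $f^*$ and any $x$ with $(x,f^*)\in N$, the limit set $\omega(x,f^*)\subseteq\omega(N)=A$ has $\bar\Delta$-projection containing $\omega(f^*)=\bar\Delta$ by the argument of the previous paragraph. Hence $\pi_{\bar\Delta}(A)=\bar\Delta$.
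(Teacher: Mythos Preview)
Your proposal is correct and follows essentially the same approach as the paper: use the open attracting neighborhood of $A$ to capture a point whose $\bar\Delta$-coordinate is a transitive point for $\psi$, then project its $\omega$-limit set into $\bar\Delta$ via the compactness-of-$M$ argument you spell out (a step the paper actually leaves implicit). The only cosmetic difference is that the paper shifts the single transitive point $f^*$ from Lemma~\ref{omegalimit} into an $\epsilon$-ball about a chosen point of $A$, whereas you instead invoke Proposition~\ref{dense} to find a transitive point in the open set $\pi_{\bar\Delta}(N)$; your preliminary remarks on closedness and $\psi$-invariance of $\pi_{\bar\Delta}(A)$ are not needed for the argument but do no harm.
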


\begin{proof}
Take $(\bar x, \bar f) \in A$.  Since $A$ is an attractor, there exists $\epsilon > 0$ and a ball $B$ of radius $\epsilon$ around $(\bar x, \bar f)$ such that the $\omega$ limit set of any point in the ball is contained in $A$.  By Lemma \ref{omegalimit}, there exists $f^* \in \bar{\Delta}$ so that $\omega(f^*) = \bar{\Delta}$.  Therefore, there exists $t$ such that $d(\psi(t,f^*),\bar f) < \epsilon$.  Thus, $(\bar x, \psi(t,f^*)) \in B$, so $\omega(\bar x, \psi(t,f^*)) \subseteq A$.  Since $\omega(f^*) = \bar{\Delta}$ implies $\omega(\psi(t,f^*)) = \bar{\Delta}$, we see that $\pi_{\bar{\Delta}}(A) = \bar{\Delta}$.

\end{proof}

The same argument applies for a repeller, yielding the following corollary.

\begin{cor}\label{repellertime}
Suppose that a repeller $R \subseteq M \times \bar{\Delta}$ is a Morse set for some Morse decomposition.  Then, $\pi_{\bar{\Delta}}(R) = \bar{\Delta}$.
\end{cor}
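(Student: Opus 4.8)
The plan is to mirror the proof of Theorem~\ref{allofdeltabar}, reversing the direction of time; the one genuinely new ingredient needed is a \emph{backward}-transitive point, i.e. some $f_* \in \bar{\Delta}$ with $\alpha(f_*) = \bar{\Delta}$ (recall that since $G$ is a single communicating class $C$, we have $\bar{\Delta} = \bar{\Delta}_C$). So the first step is to establish such an $f_*$. The proof of Lemma~\ref{omegalimit} imports from \cite{discretesystems} a point $x^* \in \Omega_C$ whose forward shift orbit is dense, obtained by concatenating, to the right of a fixed symbol, every admissible word of the irreducible subshift of finite type associated with $C$. Carrying out the same concatenation to the left produces a point $x_* \in \Omega_C$ whose backward shift orbit is dense, i.e. $\alpha(x_*) = \Omega_C$ in the discrete system; this is legitimate because $C$ being a single communicating class forces its edge-reversal to be a single communicating class on the same vertices. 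Lifting $x_*$ to $\bar{\Delta}$ exactly as in Lemma~\ref{omegalimit} --- setting $f_*(nh) = (x_*)_n$ and passing to horizontal shifts --- yields $f_* \in \bar{\Delta}_C$ with $\alpha(f_*) = \bar{\Delta}_C = \bar{\Delta}$. (Alternatively one may simply invoke the fact that in an irreducible subshift of finite type the set of points with dense backward orbit is residual, hence nonempty.)

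With $f_*$ in hand the rest is formal. Take $(\bar x, \bar f) \in R$. Since $R$ is a repeller, there is an open neighborhood $N^*$ with $\alpha(N^*) = R$; choose $\epsilon > 0$ so that the ball $B$ of radius $\epsilon$ about $(\bar x, \bar f)$ lies in $N^*$, whence $\alpha(B) \subseteq \alpha(N^*) = R$ by monotonicity of the $\alpha$-limit. Because $\alpha(f_*) = \bar{\Delta}$, there is a $t \in \mathbb{R}$ with $d(\psi(t, f_*), \bar f) < \epsilon$, and since the $M$-coordinates agree, $(\bar x, \psi(t, f_*)) \in B$, so $\alpha(\bar x, \psi(t, f_*)) \subseteq R$. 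Now $\alpha(\psi(t, f_*)) = \alpha(f_*) = \bar{\Delta}$, so for an arbitrary $k \in \bar{\Delta}$ pick $t_n \to -\infty$ with $\psi(t_n, f_*) \to k$; by compactness of $M$ a subsequence of $\varphi(t_n, \bar x, \psi(t, f_*))$ converges to some $y \in M$, and then $(y, k) \in \alpha(\bar x, \psi(t, f_*)) \subseteq R$, so $k \in \pi_{\bar{\Delta}}(R)$. As $k \in \bar{\Delta}$ was arbitrary, $\pi_{\bar{\Delta}}(R) = \bar{\Delta}$.

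The main obstacle is the first step: pinning down the backward-transitive point $f_*$ carefully, since \cite{discretesystems} and Lemma~\ref{omegalimit} are phrased only in terms of forward orbits. Everything after that --- the choice of repeller neighborhood, the shift-invariance of $\alpha$-limit sets, and the compactness extraction in the $M$-coordinate --- is a direct transcription of the proof of Theorem~\ref{allofdeltabar} with $\omega$ replaced by $\alpha$ and $t \to +\infty$ replaced by $t \to -\infty$, exactly as the remark ``the same argument applies for a repeller'' anticipates.
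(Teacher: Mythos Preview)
Your proposal is correct and follows exactly the route the paper intends by ``the same argument applies for a repeller'': you time-reverse the proof of Theorem~\ref{allofdeltabar}, and you are right that the only substantive point to check is the existence of a backward-transitive $f_*$ with $\alpha(f_*)=\bar{\Delta}$, which you justify via the (correct) observation that a single communicating class remains one under edge reversal. If anything, you are more careful than the paper, since you spell out both the construction of $f_*$ and the compactness extraction in the $M$-coordinate that the paper leaves implicit; the only cosmetic slip is that the times in your final paragraph should be applied to $\psi(t,f_*)$ rather than $f_*$, but this is harmless since $\alpha(\psi(t,f_*))=\alpha(f_*)$.
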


\section{Behavior Under Small Perturbations}
We now analyze the behavior of a dynamical system under ``small" perturbations, i.e. we compare the global behavior of%
\begin{equation}
\phi:\mathbb{R}\times M\rightarrow M \label{systunpert}%
\end{equation}
with that of
\begin{align}
\Phi &  :\mathbb{R}\times M\times\Delta\rightarrow M\times
\Delta\label{systpert}\\
\Phi(t,x,f)  &  \mapsto(\varphi(t,x,f),\psi(t,f))\nonumber
\end{align}
by looking at their respective Morse decompositions. Since the Morse
decompositions of (\ref{systunpert}) live in $M$, and those of (\ref{systpert}
) live in $M\times\Delta$, we need to make the decompositions
comparable. This is accomplished by projecting chain recurrent sets from
$M\times\Delta$ down to $M$ in the following way.

\begin{defn}
A set $E\subset M$ is called a chain set of $\Phi$, if

\begin{enumerate}
\item for all $x\in E$ there exists $f\in\Delta$ with
$\varphi(t,x,f)\in E$ for all $t\in\mathbb{R}$,

\item for all $x,y\in E$ and all $\epsilon,T>0$ there exists an
$(\epsilon,T)-$chain from $x$ to $y$,

\item $E$ is maximal with these two properties.
\end{enumerate}

Here an $(\epsilon,T)-$ chain from $x$ to $y$ is defined by $n\in N$,
$x_{0},...,x_{n}\in M$, $f_{0},...,f_{n-1}\in\bar{\Delta}$, and $t_{0}%
,...,t_{n-1}\geq T$ such that $x_{0}=x$, $x_{n}=y$, and $d(\varphi(t_{i}%
,x_{i},f_{i}),x_{i+1})\leq\epsilon$ for all $i=0,...,n-1$.
\end{defn}

\begin{defn}
The lift $\mathcal{L}(A)$ of a set $A \subseteq M$ onto $M \times \Delta$ is defined by
$$
\mathcal{L}(A) = \{(x,f)\in M \times \Delta | \pi_M(\Phi(t,x,f))\in A \, \forall \, t \in \mathbb{R})\} .
$$
\end{defn}

It turns out that chain sets in $M$ correspond exactly to the chain recurrent
components of $(\Phi,M\times\Delta)$.

\begin{thm}
Let $E\subset M$ be a chain set of $\Phi$.

\begin{enumerate}
\item The lift of $E$ to $\mathcal{E}\subset M\times\Delta$ is a
maximal, invariant chain transitive set of $(\Phi,M\times\Delta)$.

\item If $\mathcal{E}\subset M\times\Delta$ is a maximal, invariant
chain transitive set of $(\Phi,M\times\Delta)$ then its projection
$\pi_{M}(\mathcal{E})$ is a chain set.
\end{enumerate}
\end{thm}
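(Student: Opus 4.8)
Both statements rest on one construction --- a way to convert a controlled $(\epsilon,T)$-chain in $M$ into a genuine $(\epsilon,T)$-chain for $\Phi$ in $M\times\Delta$ --- together with a few soft facts, which I record first. If $E$ is a chain set then $\pi_M(\mathcal L(E))=E$: the inclusion $\subseteq$ is immediate from the definition of $\mathcal L$ (evaluate at $t=0$), and $\supseteq$ is property~(1) of a chain set; and $\mathcal L(E)$ is invariant by the flow property of $\Phi$. Conversely, for an invariant set $\mathcal E$ the projection $E:=\pi_M(\mathcal E)$ automatically has property~(1) (pick $f$ with $(x,f)\in\mathcal E$ and use invariance), and if in addition $\mathcal E$ is chain transitive then $E$ has property~(2), since a product $(\epsilon,T)$-chain from $(x,f)$ to $(y,g)$ projects, the metric on $M\times\Delta$ dominating that on $M$, to a controlled $(\epsilon,T)$-chain from $x$ to $y$ whose control functions are the $\Delta$-components of the chain. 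Finally, because there are no cycles among the lifts $\bar\Delta_C$ (Theorem~\ref{morse1}), any invariant chain transitive subset of $M\times\Delta$ has its $\Delta$-coordinate confined to a single $\bar\Delta_C$, so I assume throughout, as in the preceding subsection, that $G$ is a single communicating class; in particular $G$ is strongly connected, so any admissible word prolongs to reach any prescribed vertex in boundedly many steps.

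\textbf{The lift.} Let $a,b\in E$ with $E$ a chain set, let $f_a,f_b$ be controls witnessing property~(1) at $a$ and $b$, and fix $\epsilon,T>0$; I claim there is an $(\epsilon,T)$-chain in $M\times\Delta$ from $(a,f_a)$ to $(b,f_b)$. Using property~(2), take a controlled $(\epsilon',T)$-chain $a=z_0,z_1,\dots,z_m=b$ with controls $w_0,\dots,w_{m-1}\in\bar\Delta$ and times $\tau_0,\dots,\tau_{m-1}\geq T$, where $\epsilon'$ is small; after enlarging each $\tau_i$ to the next multiple of $h$ (the extra flow, of duration $<h$, absorbed into a slightly larger $\epsilon'$) we may take $\tau_i\in h\mathbb{N}$. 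Now assemble a single control $w\in\bar\Delta$ whose grid blocks read, in order: a long block agreeing with a translate of $f_a$; then the finite words $w_0|_{[0,\tau_0)},\dots,w_{m-1}|_{[0,\tau_{m-1})}$, each consecutive pair separated by a short admissible connector (bounded length, available by strong connectivity) making every transition legal; then a long block agreeing with a translate of $f_b$. The desired chain then runs: flow exactly along $f_a$; one step in which, after a flow of length $T$, the $M$-coordinate is matched exactly while the $\Delta$-coordinate jumps from a shift of $f_a$ onto $w$ --- legitimate because $d$ discounts grid position $i$ by $4^{-|i|}$ and $w$ was made to agree with that shift of $f_a$ on a central block of half-length $Nh$, $N$ large, so the jump is $<\epsilon$; then, for $i=0,\dots,m-1$, flow for time $\tau_i$ from $\bigl(z_i,\psi(\sigma_i,w)\bigr)$, where $\sigma_i$ is the position inside $w$ at which the $i$-th word begins, so that the $\Delta$-coordinate advances \emph{exactly} to $\psi(\sigma_{i+1},w)$ while the $M$-coordinate lands within $\epsilon'$ of $z_{i+1}$; then a symmetric $<\epsilon$ jump off $w$ onto a shift of $f_b$; then flow exactly along $f_b$ to $(b,f_b)$. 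Every time used is $\geq T$ and every jump is $<\epsilon$, so this is an $(\epsilon,T)$-chain.

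\textbf{Conclusion.} For (1): set $\mathcal E:=\mathcal L(E)$. It is non-void, satisfies $\pi_M(\mathcal E)=E$, and is invariant by the soft facts, and chain transitivity is the lift applied between two chosen points $(x,f),(y,g)\in\mathcal E$ (for which $f,g$ themselves witness property~(1) at $x,y$). For maximality, let $\tilde{\mathcal E}\supseteq\mathcal E$ be a maximal invariant chain transitive set; by the soft facts $\pi_M(\tilde{\mathcal E})$ has properties~(1) and~(2) and contains $E$, so maximality of the chain set $E$ gives $\pi_M(\tilde{\mathcal E})=E$, whence $(x,f)\in\tilde{\mathcal E}$ forces $\varphi(t,x,f)\in E$ for all $t$, i.e. $\tilde{\mathcal E}\subseteq\mathcal L(E)=\mathcal E$. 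For (2): let $\mathcal E$ be a maximal invariant chain transitive set and $E:=\pi_M(\mathcal E)$, which has properties~(1) and~(2) by the soft facts; if $E\subseteq E'$ with $E'$ also satisfying~(1) and~(2), fix $x'\in E'$ and a witness $f'$ for property~(1) at $x'$. For any $(\bar x,\bar f)\in\mathcal E$ we have $\bar x\in E\subseteq E'$, and $\bar f$ witnesses property~(1) at $\bar x$ (since $\mathcal E$ is invariant), so property~(2) for $E'$ together with the lift produces $(\epsilon,T)$-chains in $M\times\Delta$ both ways between $(x',f')$ and $(\bar x,\bar f)$ for all $\epsilon,T>0$; hence $(x',f')$ is chain recurrent and lies in the same maximal invariant chain transitive set as $\mathcal E$, namely $\mathcal E$, so $x'\in\pi_M(\mathcal E)=E$. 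Thus $E'=E$.

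\textbf{Main obstacle.} The only real content is the lift, and its difficulty is the coupling of the two coordinates: any maneuver that moves the $\Delta$-coordinate toward a target also carries the $M$-coordinate along for an uncontrolled time, so one cannot simply ``correct'' the symbolic part after steering in $M$. The device that decouples them is to route the entire $M$-chain through one concatenated control $w$: then every interior $\Delta$-transition is a shift of $w$ and hence \emph{exact}, so no error ever accumulates in the $\Delta$-coordinate during the interior steps, and the only $\Delta$-errors are the two jumps onto and off of $w$, which are small for the cheap reason that $d$ weights distant grid positions geometrically. Making $w$ legitimately lie in $\bar\Delta$ --- integer-length blocks and admissible transitions everywhere --- is where strong connectivity of $G$ and the rounding of the chain times enter, and tracking the offsets $\sigma_i$ of the successive words inside $w$ is the bookkeeping needed to confirm that the lifted chain's $M$-coordinate really shadows the given $M$-chain up to its errors.
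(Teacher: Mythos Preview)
Your argument is essentially the paper's: both exploit the $4^{-|i|}$ decay of the $\bar\Delta$-metric to make the symbolic jumps small by forcing agreement on a long central window, and then splice the $M$-chain's controls together so that the interior $\Delta$-transitions carry no error. The paper organizes the splicing step by step---defining a custom control $g_j$ at each node, patched from $g_{j-1}(t_{j-1}+\cdot)$, $f_j$, and $f_{j+1}$---rather than routing everything through a single concatenated $w$; this sidesteps your connector-time bookkeeping (and the need to start the $M$-chain at $\varphi(T,a,f_a)$ rather than at $a$, which you should adjust), but the core construction and the maximality arguments are the same.
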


\begin{proof}
(ii) Let $\mathcal{E}$ be an invariant, chain transitive set in $M\times
\bar{\Delta}$. For $x\in\pi_{M}(\mathcal{E})$ there exists $f\in
\bar{\Delta}$ such that $\varphi(t,x,u)\in\mathcal{E}$ for all
$t\in\mathbb{R}$ by invariance. Now let $x,\,y\in\pi_{M}(\mathcal{E})$ and
choose $\epsilon>0,\,T>0$. Then by chain transitivity of $\mathcal{E}$, we
can choose $x_{j},u_{j},t_{j}$ such that the corresponding trajectories
satisfy the required condition.  Note also that $E$ is maximal if and only if $\mathcal{E}$ is maximal.

(i) Let $(x,f),\;(y,g)\in\mathcal{E}$ and pick $\epsilon>0.$ Choose
$T_{0}>0$ large enough such that
\[
\sum_{j\notin\lbrack-T_{0},T_{0}]}^{{}}4^{-\left\vert j\right\vert
}<\epsilon
\]
and pick $T>T_{0}$. Since $\varphi(2T,x,f)\in E$ and $\varphi(-T,y,g)\in E$,
the fact that $E$ is a chain set yields the existence of $k\in\mathbb{N}$ and
$x_{0},...,x_{k}\in M,\,f_{0},...,f_{k-1}\in\bar{\Delta}%
,\,t_{0},...,t_{k-1}\geq T$ with $x_{0}=\varphi(2T,x,f),\,x_{k}%
=\varphi(-T,y,g),$ and
\[
d(\varphi(t_{j},x_{j},f_{j}),x_{j+1})<\epsilon\text{\ for }j=0,...,k-1.
\]
We now construct an $(\epsilon,T)$-chain from $(x,f)$ to $(y,g)$ in the
following way. Define
\[%
\begin{array}
[c]{cccc}%
t_{-2}=T, & x_{-2}=x, & g_{-2}=f, & \\
t_{-1}=T, & x_{-1}=\varphi(T,x,f), & g_{-1}(t)= & \left\{
\begin{array}
[c]{cc}%
f(t_{-2}+t) & \text{for }t\leq t_{-1}\\
f_{0}(t-t_{-1}) & \text{for }t>t_{-1}%
\end{array}
\right.
\end{array}
\]
and let the times $t_{0},...,t_{k-1}$ and the points $x_{0},...,x_{k}$ be as
given earlier. Furthermore, set
\[%
\begin{array}
[c]{ccc}%
t_{k}=T, & x_{k+1}=y, & g_{k+1}=g,
\end{array}
\]
and define for $j=0,...,k-2$%
\begin{align*}
g_{j}(t)  &  =\left\{
\begin{array}
[c]{ccc}%
g_{j-1}(t_{j-1}+t) & \text{for} & t\leq0\\
f_{j}(t) & \text{for} & 0<t<t_{j}\\
f_{j+1}(t-t_{j}) & \text{for} & t>t_{j},
\end{array}
\right. \\
g_{k-1}(t)  &  =\left\{
\begin{array}
[c]{ccc}%
g_{k-2}(t_{k-2}+t) & \text{for} & t\leq0\\
f_{k-1}(t) & \text{for} & 0<t\leq t_{k-1}\\
g(t-t_{k-1}-T) & \text{for} & t>t_{k-1},
\end{array}
\right. \\
g_{k}(t)  &  =\left\{
\begin{array}
[c]{cc}%
g_{k-1}(t_{k-1}+t) & \text{for }t\leq0\\
g(t-T) & \text{for }t>0.
\end{array}
\right.
\end{align*}
We see that
\[
(x_{-2},g_{-2}),\,(x_{-1},g_{-1}),...,(x_{k+1},g_{k+1})\text{ and }%
t_{-2},\,t_{-1},...,t_{k}\geq T
\]
yield an $(\epsilon,T)$-chain from $(x,f)$ to $(y,g)$ provided that for
$j=-2,\,-1,...,k$%
\[
d(g_{j}(t_{j}+\cdot),g_{j+1})<\epsilon.
\]
By choice of $T$ we have by the definition of the metric on $\bar{\Delta
}$ for all $h_{1},\,h_{2}\in\bar{\Delta}$
\begin{align*}
d(h_{1},h_{2})  &  =\sum_{j\in Z}4^{-\left\vert j\right\vert }f(h_{1}%
,h_{2},j)\\
&  =\sum_{j\in Z}4^{-\left\vert j\right\vert }\frac{1}{h}\int\limits_{jh}%
^{(j+1)h}\delta(h_{1},h_{2},t)dt\\
&  \leq\sum_{j\in\lbrack-T_{0},T_{0}]}4^{-\left\vert j\right\vert }\frac{1}%
{h}\int\limits_{jh}^{(j+1)h}\delta(h_{1},h_{2},t)dt\text{ \ }+\text{
}\epsilon\text{.}%
\end{align*}
Note that for the $g_{j}$, $j=-2,\,-1,...,k$ as defined above the integrands
vanish, giving the desired result.
\end{proof}

With this result, we can look at families of dynamical systems over graphs in
the following way:

Let $\alpha\in I$, where $I$ is some index set, and consider the family%
\begin{equation}
\Phi_{\alpha}:\mathbb{R}\times M\times\bar{\Delta}\rightarrow
M\times\bar{\Delta} \label{systfamdelta}%
\end{equation}
such that $\Phi_{\cdot}$ depends continuously on $\alpha$. We embed the
systems of (\ref{systfamdelta}) into a larger base space by defining:

Let $U\subset\mathbb{R}^{m}$ be compact, convex with $0\in$ int$(U)$, and set
$\mathcal{U}=\{u:\mathbb{R}\rightarrow U$, locally integrable$\}$. The space
$\mathcal{U}$ with the weak$^{\ast}-$topology is a compact space. If $G$ is a finite directed $N$-graph, then $\bar{\Delta}(G)\subset\mathcal{U}$. For the
family of systems%
\[
\Phi_{\alpha}:\mathbb{R}\times M\times\mathcal{U}\rightarrow M\times
\mathcal{U}%
\]
we have the following result.

\begin{lem}
\label{lemDyCon}Let $\alpha_{k}\rightarrow\alpha_{0}$ in $I$, and let
$E^{\alpha_{k}}\subset M$ be chain sets of $\Phi_{\alpha_{k}}$. If
\[
\underset{\alpha_{k}\rightarrow\alpha_{0}}{\lim\sup}E^{\alpha_{k}}\equiv\{x\in
M\text{, there exist }x^{\alpha_{k}}\in E^{\alpha_{k}}\text{ with }%
x^{\alpha_{k}}\rightarrow x\}\neq\emptyset
\]
then it is contained in a chain set $E^{\alpha_{0}}$ of $\Phi_{\alpha_{0}}$.
\end{lem}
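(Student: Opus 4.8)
The plan is to show that the set $L:=\limsup_{\alpha_k\to\alpha_0}E^{\alpha_k}$ (assumed non-void) is itself weakly invariant and chain transitive for $\Phi_{\alpha_0}$. Since a chain set is by definition a maximal subset of $M$ with those two properties, and since the union of all subsets of $M$ that contain $L$ and satisfy both properties again satisfies them — chains joining two such points can be concatenated through a common point of the (non-empty) set $L$, and a control keeping a point in one of these sets keeps it in the union — this maximal union is a chain set $E^{\alpha_0}$ of $\Phi_{\alpha_0}$ containing $L$, which is what we want.

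For weak invariance of $L$: given $x\in L$, choose $x^{\alpha_k}\in E^{\alpha_k}$ with $x^{\alpha_k}\to x$, and invoke condition (1) for the chain sets $E^{\alpha_k}$ to obtain controls $f^{\alpha_k}\in\bar{\Delta}(G)\subset\mathcal{U}$ with $\varphi_{\alpha_k}(t,x^{\alpha_k},f^{\alpha_k})\in E^{\alpha_k}$ for all $t$. Because $\mathcal{U}$ with the weak$^\ast$ topology is compact and $\bar{\Delta}(G)$ is closed in it, pass to a subsequence with $f^{\alpha_k}\to f\in\bar{\Delta}(G)$. Continuous dependence of the flow on the triple $(\alpha,x,f)$ for fixed $t$ then gives $\varphi_{\alpha_0}(t,x,f)=\lim_k\varphi_{\alpha_k}(t,x^{\alpha_k},f^{\alpha_k})$, and as each term lies in $E^{\alpha_k}$ the limit lies in $L$; hence $f$ keeps the $\Phi_{\alpha_0}$-trajectory of $x$ inside $L$.

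For chain transitivity: fix $x,y\in L$, $\epsilon>0$, $T>0$, and pick $x^{\alpha_k}\to x$, $y^{\alpha_k}\to y$ in the respective chain sets. The decisive preliminary observation is that every $(\epsilon,T)$-chain may be assumed to have all of its time-steps in $[T,2T]$: a step of length $t>2T$ can be broken into $\lceil t/2T\rceil$ steps of equal length in $[T,2T]$ along the same $\Phi_{\alpha_k}$-trajectory, inserting zero-size jumps. On the compact set $[0,2T]\times M\times\mathcal{U}$ the map $(\alpha,t,p,g)\mapsto\varphi_\alpha(t,p,g)$ is uniformly continuous (equivalently, a Gronwall estimate for the uniformly Lipschitz vector fields gives a bound of the form $C\,d_I(\alpha,\alpha_0)\,e^{2LT}$, uniform in $p$ and $g$), so there are a neighbourhood $N$ of $\alpha_0$ and $\rho>0$ such that for $\alpha\in N$ one has $d(\varphi_\alpha(t,p,g),\varphi_{\alpha_0}(t,p,g))<\epsilon/4$ and $d(p,p')<\rho\Rightarrow d(\varphi_{\alpha_0}(t,p,g),\varphi_{\alpha_0}(t,p',g))<\epsilon/4$ for all $t\in[0,2T]$, $p,p'\in M$, $g\in\mathcal{U}$. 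Now choose $k$ large enough that $\alpha_k\in N$, $d(x,x^{\alpha_k})<\min(\rho,\epsilon/4)$, and $d(y,y^{\alpha_k})<\epsilon/4$, and take an $(\epsilon/4,T)$-chain from $x^{\alpha_k}$ to $y^{\alpha_k}$ for $\Phi_{\alpha_k}$ with all time-steps in $[T,2T]$. Replacing each $\Phi_{\alpha_k}$-step by the corresponding $\Phi_{\alpha_0}$-step and replacing the first point by $x$ and the last by $y$, the errors ($\epsilon/4$ from the original chain, $\epsilon/4$ from $\alpha_k$ versus $\alpha_0$, $\epsilon/4$ from moving the endpoints) add to less than $\epsilon$, yielding an $(\epsilon,T)$-chain from $x$ to $y$ for $\Phi_{\alpha_0}$.

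The main obstacle is precisely this uniformity issue: a single approximating chain in $E^{\alpha_k}$ has finitely many steps, but their number is not controlled as $k\to\infty$, so one cannot naively pick $k$ large to make the accumulated parameter-perturbation error small — the neighbourhood of $\alpha_0$ one needs would depend on the chain, which depends on $k$. Capping every step-length by $2T$ (a quantity depending only on $T$) is what removes this circularity; the rest is routine $\epsilon/4$ bookkeeping together with weak$^\ast$ compactness of $\mathcal{U}$ and the standing assumption that $\Phi_\cdot$ depends continuously on $\alpha$. One minor technical point to verify is that the shifted controls produced by the step-splitting still lie in the admissible control space (in $\bar{\Delta}(G)$, or harmlessly in $\Delta(G)$, which is shift-closed).
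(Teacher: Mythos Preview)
The paper does not prove this lemma at all; it simply cites Theorem 3.4.6 in Colonius--Kliemann, \emph{The Dynamics of Control}. Your argument is correct and is essentially the standard proof found there: show that $L=\limsup E^{\alpha_k}$ is weakly invariant (via weak$^\ast$ compactness of controls and continuous dependence on $(\alpha,x,f)$) and chain transitive (via the time-step truncation to $[T,2T]$, which makes the $\alpha_k\to\alpha_0$ error uniform over all steps), and then enlarge $L$ to a maximal such set. The bookkeeping you give --- the $\epsilon/4$ splitting, the concatenation of chains through a point of $L$ to obtain maximality, and the observation that step-splitting produces shifted controls still lying in the shift-closed control space --- is all in order.
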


For a proof of this result see Theorem 3.4.6 in \cite{DyCon}.

In many applications the parameter $\alpha\in I$ refers to the size of the
perturbation in the following way: Let $U\subset\mathbb{R}^{m}$ as above and
set $U^{\rho}\equiv\rho\cdot U$ for $\rho\geq0$. Assume that the compact state
space $M$ is a $C^{\infty}-$manifold and consider the family of differential
equations
\begin{equation}
\dot{x}=X(x,\rho f)\text{, }\rho\geq0\text{.} \label{systfamrho}%
\end{equation}
where $X$ is a $C^{\infty}-$vector field on $M$ and the parameter $\rho$ is the perturbation size.
Each differential equation (\ref{systfamrho}) gives rise to a parametrized
dynamical system on $M\times\mathcal{U}^{\rho}$ and on $M\times\bar
{\Delta}^{\rho}$. For $\rho=0$ this is the unperturbed system given
by $\dot{x}=X(x,0)$. In this set up, Lemma \ref{lemDyCon} reads:

\begin{cor}
\label{corDyCon}Let $\rho_{k}\rightarrow0$ and let $E^{\rho_{k}}\subset M$ be
chain sets of $\Phi_{\rho}:\mathbb{R}\times M\times\mathcal{U}^{\rho
}\rightarrow M\times\mathcal{U}^{\rho}$. If $\underset{\rho_{k}\rightarrow
0}{\lim\sup}E^{\rho_{k}}\neq\emptyset$ then it is a chain recurrent set of
$\Phi_{0}$.
\end{cor}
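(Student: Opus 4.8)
The plan is to obtain this as a direct specialization of Lemma \ref{lemDyCon}. I would take the index set to be $I=\{\rho\ge 0\}$, with $\alpha_k=\rho_k$ and $\alpha_0=0$, and let the family be $\Phi_\rho:\mathbb{R}\times M\times\mathcal{U}^\rho\to M\times\mathcal{U}^\rho$ arising from the differential equations (\ref{systfamrho}). First I would check that this family satisfies the continuity hypothesis of Lemma \ref{lemDyCon}: the vector field $X(x,\rho f)$ depends smoothly on $\rho$, uniformly on the compact manifold $M$, so by continuous dependence of ODE solutions on parameters the flows $\varphi(t,x,\rho f)$ behave continuously as $\rho_k\to 0$; on the control side $\mathcal{U}^{\rho}=\rho\,U$ shrinks to $\{0\}$ and $\bar\Delta^{\rho}\subset\mathcal{U}$ varies continuously in the weak$^\ast$ topology. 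With the hypotheses in place, Lemma \ref{lemDyCon} applies: since $\limsup_{\rho_k\to 0}E^{\rho_k}\neq\emptyset$ by assumption, it is contained in a chain set $E^{0}$ of $\Phi_0$.

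It then remains to identify what a chain set of $\Phi_0$ is. For $\rho=0$ we have $U^{0}=0\cdot U=\{0\}$, so $\mathcal{U}^{0}$ consists of a single (identically zero) function and likewise $\bar\Delta^{0}$ is a single point; thus $M\times\mathcal{U}^{0}$ is canonically identified with $M$, and under this identification $\Phi_0$ is exactly the unperturbed flow $\phi$ of (\ref{systunpert}). The three conditions defining a chain set then collapse: condition (1) is vacuous (there is only one admissible $f$), condition (2) becomes chain transitivity of $E$ under $\phi$, and condition (3) is maximality. Hence a chain set of $\Phi_0$ is precisely a chain recurrent (equivalently, maximal chain transitive) component of $\phi$ on $M$. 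Therefore $\limsup_{\rho_k\to 0}E^{\rho_k}\subseteq E^{0}$ with $E^{0}$ a chain recurrent component of $\Phi_0$, which is the assertion. One also notes that $\limsup_{\rho_k\to 0}E^{\rho_k}$ is automatically a closed subset of the compact space $M$, consistent with the compactness of chain recurrent components.

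The only genuine work is the verification of the continuity-in-$\rho$ hypothesis needed to invoke Lemma \ref{lemDyCon}, together with making the degeneration $\bar\Delta^{\rho_k}\to\bar\Delta^{0}$ precise inside $\mathcal{U}$ with the weak$^\ast$ topology; I expect this to be the main (and essentially only) obstacle, everything else being formal once Lemma \ref{lemDyCon} is available. In particular, no new chain-recurrence estimates are required beyond those already packaged in that lemma and in the earlier correspondence between chain sets in $M$ and maximal chain transitive sets in $M\times\Delta$.
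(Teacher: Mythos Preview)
Your approach is correct and matches the paper's: the paper does not give a self-contained proof but simply cites Corollary~3.4.8 in \cite{DyCon}, presenting the result as the specialization of Lemma~\ref{lemDyCon} (Theorem~3.4.6 in \cite{DyCon}) to the parameter $\alpha=\rho$ with $\alpha_0=0$, exactly as you outline. Your additional observation that for $\rho=0$ the control space degenerates to a point, so that chain sets of $\Phi_0$ are precisely chain recurrent components of the unperturbed flow, is the correct way to read the conclusion and is more explicit than what the paper records.
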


For a proof of this result see Corollary 3.4.8 in \cite{DyCon}. This corollary
then gives the following result for ``small" perturbations of dynamical systems
in the context of the systems on $M\times\mathcal{U}^{\rho}$:

\begin{cor}
\label{corDyCon2}Consider the family $\Phi_{\rho}:\mathbb{R}\times
M\times\mathcal{U}^{\rho}\rightarrow M\times\mathcal{U}^{\rho}$ of dynamical
systems given by the differential equations (\ref{systfamrho}). Assume that
$\Phi_{\rho}$ has a finest Morse decomposition in $M\times\mathcal{U}^{\rho}$
for $\rho\geq0$. Then there exists $\epsilon>0$ such that for all
$0\leq\rho\leq\epsilon$ the finest Morse decomposition of $\Phi_{\rho}$
corresponds to the one of $\Phi_{0}$, i.e. the Morse sets are in 1-1
correspondence and the order induced by the Morse decomposition of $\Phi_{0}$
is preserved.
\end{cor}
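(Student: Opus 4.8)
The plan is to reduce the statement to a continuity property of the \emph{attractor lattices} of the flows $\Phi_\rho$. Recall from \cite{Kliemann} (see also Theorem~\ref{chainrecurrentmorse}) that whenever a flow on a compact metric space has a finest Morse decomposition, its Morse sets are exactly the maximal invariant chain transitive sets, the order $\preceq$ of Proposition~\ref{order} coincides with the order induced by $(\epsilon,T)$-chains, and the decomposition is recovered from a maximal chain of attractors $\emptyset=A_0\subsetneq A_1\subsetneq\cdots\subsetneq A_n$ (top element the whole space) by $\mathcal{M}_i=A_i\cap A_{i-1}^{*}$, where $A_{i-1}^{*}$ denotes the complementary repeller (cf.\ \cite{DyCon}). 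Thus it suffices to show that for all sufficiently small $\rho$ the finite attractor lattice of $\Phi_\rho$ is order-isomorphic to that of $\Phi_0$ by a map continuous at $\rho=0$; the $1$--$1$ correspondence of Morse sets and the preservation of order then follow formally. I argue by contradiction, assuming a sequence $\rho_k\to0$ along which no such correspondence holds; fix the maximal chain $A_0\subsetneq\cdots\subsetneq A_n$ of $\Phi_0$ and neighbourhoods $N_i$ with $\omega_{\Phi_0}(N_i)=A_i$.

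\emph{Nothing is lost.} Shrinking, we may assume $\overline{N_i}$ is compact, $\overline{N_1}\subsetneq N_2\subsetneq\overline{N_2}\subsetneq\cdots$, that $\Phi_0$ carries $\overline{N_i}$ into the interior of $N_i$ after a fixed time $T_i$, and that $A_i$ meets $N_i\setminus\overline{N_{i-1}}$. Since $\Phi_\rho\to\Phi_0$ uniformly on compact time intervals as $\rho\to0$ (the setting of Lemma~\ref{lemDyCon} and Corollary~\ref{corDyCon}), for $\rho$ small $\Phi_\rho$ also carries $\overline{N_i}$ into the interior of $N_i$ after time $T_i$; hence $A_i^\rho:=\omega_{\Phi_\rho}(N_i)$ is a nonempty attractor of $\Phi_\rho$ contained in $\overline{N_i}$, and continuity of the relevant trajectory in $\rho$ shows $A_i^\rho$ still meets $N_i\setminus\overline{N_{i-1}}$, so $A_1^\rho\subsetneq\cdots\subsetneq A_n^\rho$ is a strictly increasing chain of attractors. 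Therefore the finest Morse decomposition of $\Phi_\rho$ has at least $n$ Morse sets, one inside each $A_i^\rho\cap(A_{i-1}^\rho)^{*}$, positioned relative to the $A_i^\rho$ as the $\mathcal{M}_i^0$ are relative to the $A_i$.

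\emph{Nothing is created, and the order is preserved.} Any Morse set $\mathcal{M}^{\rho_k}$ of $\Phi_{\rho_k}$ is a maximal invariant chain transitive set, so by the theorem relating chain sets in $M$ to chain recurrent components its projection $\pi_M(\mathcal{M}^{\rho_k})$ is a chain set $E^{\rho_k}$; these are compact connected subsets of the compact space $M$, so along a subsequence finitely many chosen families $E^{\rho_k}$ converge in the Hausdorff metric to nonempty connected sets, each of which by Corollary~\ref{corDyCon} is a chain recurrent set of $\Phi_0$ and hence lies in a unique Morse set $\mathcal{M}_j^0$. This yields, for large $k$, a surjective (by the previous paragraph) map from the Morse sets of $\Phi_{\rho_k}$ to those of $\Phi_0$; the crucial point is that it is injective once $\rho$ is small, equivalently that $\Phi_{\rho_k}$ cannot carry a maximal chain of attractors longer than $n$. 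This is exactly where the assumption that $\Phi_0$ \emph{already} has a finest decomposition is used: it forces the attractor lattice to be upper semicontinuous at $\rho=0$ as well (a Hausdorff limit of $\Phi_{\rho_k}$-attractors is an attractor-like set of $\Phi_0$ squeezed between consecutive $A_i$, so it must equal one of them), which rules out extra Morse sets; with both counts equal to $n$, the map is a bijection. Order preservation follows from Corollary~\ref{corDyCon} in both directions: a $\Phi_0$-orbit realizing $\mathcal{M}_a^0\prec\mathcal{M}_b^0$ gives arbitrarily fine $(\epsilon,T)$-chains, which uniform convergence $\Phi_\rho\to\Phi_0$ on long finite intervals turns into $(\epsilon,T)$-chains of $\Phi_\rho$ between the matched Morse sets; conversely, an $(\epsilon,T)$-chain of $\Phi_{\rho_k}$ between two of its Morse sets, translated to the associated chain sets in $M$ and passed to the limit via Corollary~\ref{corDyCon}, produces a connection between the limiting $\Phi_0$-Morse sets. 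The main obstacle is precisely this injectivity / ``no extra Morse set'' step: Corollary~\ref{corDyCon} only supplies upper semicontinuity of chain sets, and upgrading it to a genuine order-isomorphism is what the finiteness of the unperturbed Morse decomposition buys.
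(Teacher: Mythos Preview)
The paper does not give a proof of this corollary at all; immediately after the statement it simply refers the reader to Corollary~3.4.8 in \cite{DyCon}. Your proposal therefore goes well beyond what the paper does, and the continuation-of-attractors outline you sketch (persistence of attracting neighbourhoods under $\rho\to 0$, together with Corollary~\ref{corDyCon} for upper semicontinuity of chain sets) is indeed the standard Conley-theoretic route to such a result.

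That said, there is a genuine gap at precisely the point you yourself flag as ``the main obstacle'': the injectivity step. Your argument that ``a Hausdorff limit of $\Phi_{\rho_k}$-attractors is an attractor-like set of $\Phi_0$ squeezed between consecutive $A_i$, so it must equal one of them'' is not a proof. Hausdorff limits of attractors are in general only compact invariant sets, not attractors; and the finiteness of the attractor lattice of $\Phi_0$ by itself does \emph{not} force upper semicontinuity of the lattice map $\rho\mapsto\{\text{attractors of }\Phi_\rho\}$. Concretely, nothing you have written excludes the possibility that for every small $\rho>0$ the flow $\Phi_\rho$ has $n+1$ Morse sets, two of which collapse onto the same $\mathcal{M}_j^0$ as $\rho\to 0$ (Corollary~\ref{corDyCon} is perfectly compatible with this). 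Ruling this out is exactly what the machinery in \cite{DyCon} is for: in that control-theoretic setting one uses specific properties of the family $\mathcal{U}^\rho$ (monotonicity in $\rho$, inner-pair type conditions, the structure of chain control sets) to get the matching lower bound on the number of $\Phi_0$-Morse sets needed to absorb those of $\Phi_\rho$. Without invoking that structure, your counting argument establishes only that $\Phi_\rho$ has \emph{at least} $n$ Morse sets, not exactly $n$, so the claimed bijection and order-isomorphism do not follow.
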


To obtain a similar result for dynamical systems over graphs, we consider the
systems $\Phi_{\rho}:\mathbb{R}\times M\times\bar{\Delta}^{\rho
}\rightarrow M\times\bar{\Delta}^{\rho}$ where the finite set $S^{\rho
}\subset U^{\rho}$. Note that chain sets over $\bar{\Delta}^{\rho}$ are
contained in those over $\mathcal{U}^{\rho}$. To make the systems $\Phi_{\rho
}$ comparable we require that $0\in S^{\rho}$ for all $\rho\geq0$. The
examples in the next section show limit sets and Morse sets of $\Phi_{\rho
}:\mathbb{R}\times M\times\bar{\Delta}^{\rho}\rightarrow M\times
\bar{\Delta}^{\rho}$ can behave quite differently from those of $\Phi
_{0}$. However, Corollary \ref{corDyCon2} implies the following result on
small perturbations via systems over graphs:

\begin{cor}
\label{corsmalldelta}Consider the family $\Phi_{\rho}:\mathbb{R}\times
M\times\bar{\Delta}^{\rho}\rightarrow M\times\bar{\Delta}^{\rho}$ of
dynamical systems given by the differential equations (\ref{systfamrho}).
Assume that $\Phi_{\rho}$ has a finest Morse decomposition in $M\times
\bar{\Delta}^{\rho}$ for $\rho\geq0$. Assume furthermore that $0\in
S^{\rho}$ for all $\rho\geq0$ and that the vertex of the underlying graph
corresponding to $0\in S^{\rho}$ has a self loop. Then there exists
$\epsilon>0$ such that for all $0\leq\rho\leq\epsilon$ the finest Morse
decomposition of $\Phi_{\rho}$ corresponds to the one of $\Phi_{0}$, i.e. the
Morse sets are in 1-1 correspondence and the order induced by the Morse
decomposition of $\Phi_{0}$ is preserved.
\end{cor}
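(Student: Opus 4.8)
The plan is to bootstrap from Corollary~\ref{corDyCon2}, which already gives the assertion for the full control family $\mathcal{U}^\rho$, using the two facts recorded just before the statement: every chain set of $\Phi_\rho$ over $\bar\Delta^\rho$ is contained in a chain set of $\Phi_\rho$ over $\mathcal{U}^\rho$, and for $\rho=0$ both $\bar\Delta^0$ and $\mathcal{U}^0$ collapse to a single point, so $\Phi_0$ is in both cases the unperturbed flow $\phi_0$ given by $\dot x=X(x,0)$ on $M$. The role of the self-loop hypothesis is that, exactly as in the proof of Lemma~\ref{selfloop2}, the constant control $f\equiv 0$ lies in $\bar\Delta^\rho$; hence $\phi_0=\varphi(\cdot,\cdot,0)$ is itself an admissible motion of the hybrid system on $M\times\bar\Delta^\rho$, so the unperturbed Morse sets remain ``visible'' from inside $\bar\Delta^\rho$.

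First I would apply Corollary~\ref{corDyCon2} (after checking its hypothesis that $\Phi_\rho$ admits a finest Morse decomposition on $M\times\mathcal{U}^\rho$) to produce $\epsilon_1>0$ such that for $0\le\rho\le\epsilon_1$ the chain sets $\hat E_1^\rho,\dots,\hat E_n^\rho$ of $\Phi_\rho$ over $\mathcal{U}^\rho$ are in order-preserving bijection with the Morse sets $\mathcal{M}_1^0,\dots,\mathcal{M}_n^0$ of $\Phi_0$; shrinking $\epsilon_1$ and using Corollary~\ref{corDyCon} together with connectedness of the $\hat E_i^\rho$ (their lifts are connected Morse sets by Theorem~\ref{chainrecurrentmorse}), I may further assume each $\hat E_i^\rho$ lies in a small neighborhood $N_i$ of $\mathcal{M}_i^0$, with the $N_i$ pairwise disjoint. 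Next I would show each $\mathcal{M}_i^0$ is trapped inside a chain set over $\bar\Delta^\rho$: since $\mathcal{M}_i^0$ is chain transitive for $\phi_0$ by Theorem~\ref{chainrecurrentmorse} and $\phi_0=\varphi(\cdot,\cdot,0)$ with $0\in\bar\Delta^\rho$, every $(\epsilon,T)$-chain for $\phi_0$ is one for $\Phi_\rho$ over $\bar\Delta^\rho$, and $\phi_0$-invariance of $\mathcal{M}_i^0$ supplies condition~(1) of a chain set with $f\equiv 0$; hence $\mathcal{M}_i^0\subseteq E_i^\rho$ for some chain set $E_i^\rho$ of $\Phi_\rho$ over $\bar\Delta^\rho$. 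By the containment above, $E_i^\rho\subseteq\hat E_j^\rho$ for some $j$, and $\mathcal{M}_i^0\subseteq\hat E_j^\rho\subseteq N_j$ forces $j=i$, giving the sandwich $\mathcal{M}_i^0\subseteq E_i^\rho\subseteq\hat E_i^\rho\subseteq N_i$; in particular $i\mapsto E_i^\rho$ is injective and $E_i^\rho\to\mathcal{M}_i^0$ as $\rho\to 0$.

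It then remains to rule out any further chain set over $\bar\Delta^\rho$ and to match the orders. If $E^\rho$ is a chain set of $\Phi_\rho$ over $\bar\Delta^\rho$, it is chain transitive over $\mathcal{U}^\rho$, hence lies in some $\hat E_j^\rho\subseteq N_j$; the constant control $0$ then lets one reconnect every point of $E^\rho$ to $\mathcal{M}_j^0\subseteq E_j^\rho$ within $\bar\Delta^\rho$, so maximality forces $E^\rho=E_j^\rho$ (this is where the hypothesis that a \emph{finest} Morse decomposition exists on $M\times\bar\Delta^\rho$ enters, guaranteeing that the chain sets over $\bar\Delta^\rho$ are finite in number and, by the theorem identifying chain sets with chain recurrent components together with Theorem~\ref{chainrecurrentmorse}, are exactly the Morse sets). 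For the order: an $(\epsilon,T)$-chain over $\bar\Delta^\rho$ between $E_i^\rho$ and $E_k^\rho$ is also one over $\mathcal{U}^\rho$, while a connecting $\phi_0$-chain between $\mathcal{M}_i^0$ and $\mathcal{M}_k^0$ is a connecting $\bar\Delta^\rho$-chain between $E_i^\rho$ and $E_k^\rho$ via the control $0$; hence the $\bar\Delta^\rho$-order on the $E_i^\rho$ coincides with the $\mathcal{U}^\rho$-order, which by Corollary~\ref{corDyCon2} is the $\Phi_0$-order. Lifting the $E_i^\rho$ to Morse sets in $M\times\bar\Delta^\rho$ completes the proof. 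I expect the real obstacle to be this final step — verifying that cutting the control set from $\mathcal{U}^\rho$ down to the finite set $\bar\Delta^\rho$ neither splits a chain-transitive component nor creates a new one — and the self-loop, which reinstates the entire unperturbed flow as an admissible motion, is precisely the ingredient that rules this out.
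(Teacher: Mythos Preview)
Your proposal is correct and follows essentially the same route as the paper. The paper's proof is a two-sentence sketch whose sole substantive claim is that, under the self-loop hypothesis at $0\in S^\rho$, the Morse sets of $\Phi_0$ are contained in the chain sets over $\bar\Delta^\rho$ for every $\rho\ge 0$; it then invokes Corollary~\ref{corDyCon} (the upper-semicontinuity of chain sets) to finish. Your argument isolates exactly the same key fact---the constant control $f\equiv 0$ lies in $\bar\Delta^\rho$, so $\phi_0$-chain-transitivity of each $\mathcal{M}_i^0$ promotes to $\bar\Delta^\rho$-chain-transitivity---and then makes explicit the sandwich $\mathcal{M}_i^0\subseteq E_i^\rho\subseteq \hat E_i^\rho$ and the order-matching that the paper leaves implicit. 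The only point worth flagging is that you invoke Corollary~\ref{corDyCon2} for the $\mathcal{U}^\rho$-family, which formally requires a finest Morse decomposition on $M\times\mathcal{U}^\rho$ rather than on $M\times\bar\Delta^\rho$; the paper sidesteps this by appealing directly to Corollary~\ref{corDyCon} (upper semicontinuity of chain sets as $\rho\to 0$), which needs no such hypothesis, so you could simplify by doing the same.
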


The proof follows directly from the fact that, under the conditions of the
corollary, the Morse sets of the finest Morse decomposition of $\Phi_{0}$ are
contained in the chain sets over $\bar{\Delta}^{\rho}$ for each $\rho
\geq0$. Hence we obtain the results of Corollary \ref{corDyCon} for the
systems $\Phi_{\rho}:\mathbb{R}\times M\times\bar{\Delta}^{\rho
}\rightarrow M\times\bar{\Delta}^{\rho}$, which implies Corollary
\ref{corsmalldelta}.

\section{Examples}

The examples in this section show that even simple hybrid systems can admit complicated behavior.  Lemma \ref{selfloop2} and Theorem \ref{selfloopomega} suggest that it may be possible to define some sort of relationship between the $\omega$-limit sets of the individual flows $\{\phi_i\}$ and those of the hybrid system $\varphi$.  One might suspect that the limit sets of the dynamical system should be contained in the projection of the limit sets of the hybrid system onto $M$, or vice versa.  However, the example of section $6.1$ shows that these results are not true for a hybrid system on a general graph.

Conversely, the example in section $6.1$ indicates that it may be possible to find solutions of the hybrid system that avoid the limit sets of each of the individual dynamical systems.  The example in section $6.2$, however, shows that this need not be the case, even in the event that the graph is complete.

Finally, Theorem \ref{allofdeltabar} and Corollary \ref{repellertime} suggest that the projection of a Morse set of a hybrid system onto $\bar{\Delta}$ might be itself a Morse set of the dynamical system $\psi$ on $\bar{\Delta}$.  Section $6.3$ gives a counterexample to this conjecture.

\subsection{A Hybrid System that Introduces New Limit Sets}

Consider the hybrid system given by two flows, $A$, and $B$ on the compact space $M = [-1,1]$, with $\phi_A(t,x)$ given by the solution to 
$$
\dot{x} = (1-x)(1+x)(-\frac{1}{2}-x)
$$
and $\phi_B(t,x)$ given by the solution to 
$$
\dot{x} = (1-x)(1+x)(\frac{1}{2}-x)
$$
and graph $G$ given by Figure \ref{figure1}.

\begin{figure}[H]
\begin{center}
\includegraphics[width=15mm]{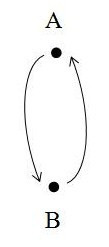}
\end{center}
\caption{Graph for the example hybrid system.}
\label{figure1}
\end{figure}

The fixed points for flow $A$ are $x = \pm 1, -\frac{1}{2}$, while the fixed points for flow $B$ are $x = \pm 1, \frac{1}{2}$.  Therefore, the $\omega$-limit sets for flows $A$ and $B$ are, respectively,
$$
\omega_A(M) = \{-1,-\frac{1}{2},1\}
$$
$$
\omega_B(M) = \{-1,\frac{1}{2},1\}
$$

As usual, let $h$ be the length of the intervals on which a function in $\bar{\Delta}$ is piecewise constant.

\begin{prop}\label{counterexample}
There exists $h$ so that given any $x \in M$ and any $f \in \bar{\Delta}$, the projection $\pi_{M}(\omega(x,f))$ contains neither $-\frac{1}{2}$ nor $\frac{1}{2}$.
\end{prop}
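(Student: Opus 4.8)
The plan is to reduce everything to the dynamics of a single one–dimensional return map. From Figure~\ref{figure1}, the graph $G$ has vertices $\{A,B\}$ and exactly the two edges $(A,B),(B,A)$, with no self loops, so the only admissible bi-infinite sequences are the two alternating ones; hence $\bar\Delta$ consists of exactly two functions, $f_A$ (equal to $A$ on $[0,h)$ and alternating with period $2h$) and $f_B=\psi(h,f_A)$. Writing $X_A(x)=(1-x^2)(-\tfrac12-x)$ and $X_B(x)=(1-x^2)(\tfrac12-x)$ for the two vector fields, the substitution $x\mapsto-x$ conjugates $\phi_A$ to $\phi_B$ (since $X_A(-x)=-X_B(x)$) and interchanges $\pm\tfrac12$; this conjugacy swaps $f_A$ and $f_B$, so it suffices to prove the claim for $f=f_A$.

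The key object is the time-$2h$ map of the hybrid flow along $f_A$, namely $P=P_h:=\phi_B(h,\cdot)\circ\phi_A(h,\cdot):[-1,1]\to[-1,1]$, so that $\varphi(2hn,x,f_A)=P_h^n(x)$. As a composition of time-$h$ maps of autonomous scalar ODEs, $P_h$ is an increasing homeomorphism fixing $\pm1$, hence for every $x$ the orbit $\{P_h^n(x)\}$ is monotone and converges to a fixed point of $P_h$. I would then analyze these fixed points for small $h$: first, $\tfrac1h(P_h(x)-x)\to X_A(x)+X_B(x)=-2x(1-x^2)$ uniformly on compact subsets of $(-1,1)$, and since $-2x(1-x^2)$ vanishes on $(-1,1)$ only at $0$ and is bounded away from $0$ off any neighbourhood of $0$, for $h$ small every fixed point of $P_h$ in $(-1,1)$ lies in an interval $(-\delta(h),\delta(h))$ with $\delta(h)\to0$, and $\{P_h^n(x)\}$ enters $(-\delta(h),\delta(h))$ for every $x\in(-1,1)$; second, $X_A,X_B<0$ just left of $1$ and $X_A,X_B>0$ just right of $-1$, so $P_h(x)<x$ near $1$ and $P_h(x)>x$ near $-1$ for every $h>0$, whence $\pm1$ are repelling and no interior orbit converges to them. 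Combining, for each $x\in(-1,1)$ we get $P_h^n(x)\to x^\ast(x)$ with $|x^\ast(x)|<\delta(h)$, while the orbit is constant for $x=\pm1$.

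To transfer this to the hybrid $\omega$-limit set, take $y\in\omega(x,f_A)$ with $\Phi(t_k,x,f_A)\to y$, write $t_k=2hn_k+s_k$ with $s_k\in[0,2h)$, and pass to a subsequence with $s_k\to s\in[0,2h]$. Using $\Phi(2hn_k,x,f_A)=(P_h^{n_k}(x),f_A)$ (as $f_A$ has $\psi$-period $2h$), $P_h^{n_k}(x)\to x^\ast(x)$, and joint continuity of $\varphi$ in $(t,x)$ on $[0,2h]$, one gets $\pi_M(y)=\varphi(s,x^\ast(x),f_A)$. Since $\varphi(s,z,f_A)$ equals either $\phi_A(r,z)$ or $\phi_B(r,\phi_A(h,z))$ for some $r\in[0,h]$, it lies within $2Kh$ of $z$, where $K=\sup_{[-1,1]}\max(|X_A|,|X_B|)$. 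Hence for $x\in(-1,1)$, $\pi_M(\omega(x,f_A))\subseteq[x^\ast(x)-2Kh,\,x^\ast(x)+2Kh]\subseteq[-\delta(h)-2Kh,\,\delta(h)+2Kh]$, while $\pi_M(\omega(\pm1,f_A))=\{\pm1\}$. Choosing $h$ small enough that $\delta(h)+2Kh<\tfrac12$ then gives $\pm\tfrac12\notin\pi_M(\omega(x,f_A))$ for all $x\in M$, and the symmetry from the first step yields the same for $f=f_B$, proving the proposition.

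The main obstacle I anticipate is the fixed-point analysis of $P_h$ for small $h$: making the uniform-on-compacts asymptotics $\tfrac1h(P_h(x)-x)\to-2x(1-x^2)$ rigorous, and, more delicately, ruling out spurious interior fixed points accumulating at $\pm1$, where that asymptotic degenerates — this last point being handled separately through the sign of $X_A,X_B$ near the endpoints. A secondary technical point is the identity $\pi_M(y)=\varphi(s,x^\ast(x),f_A)$, i.e. that the $M$-component of every point of the hybrid $\omega$-limit set is obtained by flowing a limiting fixed point of $P_h$ forward for time at most $2h$; this follows from the subsequence argument above together with continuity of $\varphi$, but should be stated with care.
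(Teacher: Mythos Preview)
Your argument is correct but follows a genuinely different strategy from the paper's. The paper chooses $h$ \emph{large} --- specifically, large enough that $\phi_A(h,\tfrac12)<0$ (hence, by the same $x\mapsto-x$ symmetry you exploit, $\phi_B(h,-\tfrac12)>0$). It first observes that the interval $(-\tfrac12,\tfrac12)$ is forward-invariant for the hybrid flow and is entered by every trajectory whose initial $M$-component lies in $(-1,1)$; then, with this choice of $h$, one full $A$-step sends all of $(-\tfrac12,\tfrac12)$ into $(-\tfrac12,0)$, and the subsequent $B$-step sends that into $(0,\phi_B(h,0)]$, yielding an explicit gap $\epsilon=\tfrac12-\phi_B(h,0)>0$ between all late-time $M$-values and $\tfrac12$ (and symmetrically for $-\tfrac12$). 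This is a direct trapping-region argument with no asymptotics at all.

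You instead take $h$ \emph{small} and work through the return map $P_h=\phi_B(h,\cdot)\circ\phi_A(h,\cdot)$: via the first-order averaging $\tfrac1h(P_h-\mathrm{id})\to X_A+X_B=-2x(1-x^2)$ you localise all interior fixed points of $P_h$ near $0$, use monotone convergence of orbits of a one-dimensional increasing homeomorphism to drive every interior orbit there, and then bound $\pi_M(\omega(x,f_A))$ by the crude flow-time estimate $|\varphi(s,\cdot,f_A)-\mathrm{id}|\le 2Kh$ over $s\in[0,2h]$. Your route is more systematic and would transfer to other pairs of vector fields whose sum has a single interior attractor, whereas the paper's argument is shorter, entirely elementary, and sidesteps precisely the two technical points you flag yourself --- the uniform-on-compacta asymptotics and the exclusion of spurious fixed points near the degenerate endpoints $\pm1$.
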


\begin{proof}
From the graph in Figure \ref{figure1}, we see that any function $f \in \bar{\Delta}$ will flicker back and forth between system $A$ and $B$.  During the intervals of time for which $f(t) = A$, solutions in $(-1,1)$ will be drawn toward the point $x = -\frac{1}{2}$.  Likewise, during the intervals for which $f(t) = B$, solutions in $(-1,1)$ will be drawn toward the point $x = \frac{1}{2}$.  It is clear, therefore, that given any $x \in (-1,1)$ and any $f \in \bar{\Delta}$, there exists a time at which the $M$ component of the solution will enter the region $(-\frac{1}{2},\frac{1}{2})$.

Suppose, then, that $x(0) \in (-\frac{1}{2},\frac{1}{2})$.  Whenever $f(t) = A$, $x$ will decrease, bounded below by $-\frac{1}{2}$.  Whenever $f(t) = B$, $x$ will increase, bounded above by $\frac{1}{2}$.  Thus, no solution that enters the interval $(-\frac{1}{2},\frac{1}{2})$ will ever leave it.

The only way that an $\omega$-limit set could contain either $-\frac{1}{2}$ or $\frac{1}{2}$, therefore, is if a solution approached one of the points asymptotically.  To eliminate this possibility, we need to show that there exists an $\epsilon > 0$ such that the distance between the solution $x$ and the points $-\frac{1}{2}$ and $\frac{1}{2}$ remains bounded below by $\epsilon$ in the limit as $t \rightarrow \infty$.  To do this, let $h$ be such that
$$\phi_A(h,\frac{1}{2}) < 0 .$$
By symmetry, this also implies that $\phi_B(h,-\frac{1}{2}) > 0$.  Now, pick a point $x \in (-\frac{1}{2},\frac{1}{2})$ and a $f \in \bar{\Delta}$, and allow the hybrid system run for a time $\tau$ so that
$$f(\tau + t) = \left\{ 
\begin{array}{cc}
A & \mbox{if} \, \left \lfloor \frac{t}{h} \right \rfloor \equiv 0 \mbox{ (mod 2)}\\
B & \mbox{otherwise} \\
\end{array} \right\}$$
This corresponds to shifting $f$ by a time $\tau$ such that $f(\tau) = A$ and $f$ is constant for the entire interval of length $h$ after $\tau$.  We know that at this time $\tau$, $x(\tau) \in (-\frac{1}{2},\frac{1}{2})$.  Thus, by our choice of $h$, $x(\tau + h) \in (-\frac{1}{2},0)$.  Let $\epsilon = \frac{1}{2} - \phi_B(h,0)$.  Since $x(\tau + h) \in (-\frac{1}{2},0)$, the distance between $x(\tau + 2h)$ and $\frac{1}{2}$ will be strictly greater than $\epsilon$.  But then, $x(\tau + 3h) \in (-\frac{1}{2},0)$, so $\frac{1}{2} - x(\tau + 4h) > \epsilon$.  More generally, $\frac{1}{2} - x(\tau + 2nh) > \epsilon$ for $n \in \mathbb{N}$.  And, since $x$ reaches a local maximum at each $\tau + 2nh$ before decreasing toward $-\frac{1}{2}$, the distance between $x$ and $\frac{1}{2}$ never approaches $0$ for $t > \tau$.  By symmetry, the distance between $x$ and $-\frac{1}{2}$ never approaches 0.  So, the projection of $\omega((x,f))$ onto $M$ contains neither $-\frac{1}{2}$ nor $\frac{1}{2}$ (c.f. Figure \ref{figure2}).

\end{proof}

\begin{figure}[H]
\begin{center}
\includegraphics[width=100mm]{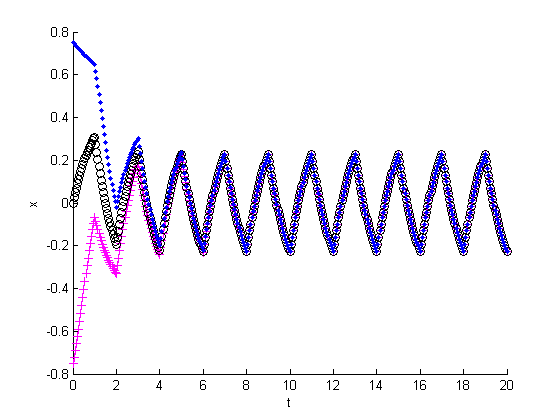}
\end{center}
\caption{Three representative trajectories for the example hybrid system.}
\label{figure2}
\end{figure}

This example has important implications.  Real-world systems are often modeled with dynamical systems by averaging over all possible states, neglecting the $\bar{\Delta}$ part of the equation.  This example shows that the limiting behavior of a hybrid system need not be anything like the limiting behavior of the individual dynamical systems comprising it.  The practice of ignoring $\bar{\Delta}$ may not always be valid.

\subsection{A Hybrid System with Limit Sets Containing Morse Sets}

Let $M = S^1$, the circle.  Suppose $G$ is a complete graph consisting of two vertices, with corresponding dynamical systems $\phi_1$ and $\phi_2$ shown in Figure \ref{figure3}.

\begin{figure}[H]
\begin{center}
\includegraphics[width=60mm]{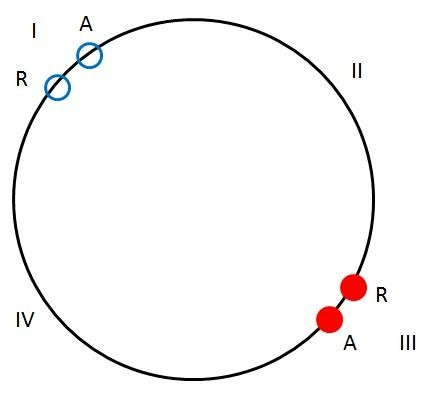}
\end{center}
\caption{The dynamical systems in the second example hybrid system.}
\label{figure3}
\end{figure}

In this picture, the letter $A$ denotes an attractor, while the letter $R$ denotes a repeller.  The red, filled circles correspond to the fixed points (and Morse sets $\mathcal{M}_j^1$) of $\phi_1$, while the blue, unfilled circles correspond to the fixed points (and Morse sets $\mathcal{M}_j^2$) of $\phi_2$.  The Morse sets divide the circle into four regions, which are marked I, II, III, and IV.

\begin{prop}\label{counterexample2}
There exists $h$ such that $\pi_{S^1}(\omega(x,f)) \cap (\displaystyle \bigcup_{i,j=1}^2 \mathcal{M}_i^j) \neq \emptyset$ for all $x \in S^1, f \in \Delta$.
\end{prop}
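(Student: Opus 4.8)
The strategy is proof by contradiction: if $\pi_{S^1}(\omega(x,f))$ missed all four Morse sets, I will show the hybrid trajectory must accumulate on one of the fixed points anyway.

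Since $S^1$ is compact and $\Delta$ is compact (Lemma~\ref{deltabarcompact}), $S^1\times\Delta$ is compact, so for the continuous flow $\Phi$ the set $\omega(x,f)$ is nonempty, compact, connected, and invariant; in particular $\pi_{S^1}(\omega(x,f))$ is a connected subset of $S^1$. If it avoids all four fixed points it must lie inside one of the four open arcs $I, II, III, IV$ cut out by the Morse sets. Choosing any $(y_0,g_0)\in\omega(x,f)$, invariance keeps the whole orbit $\{\Phi_t(y_0,g_0):t\in\mathbb{R}\}$ inside $\omega(x,f)$, so its $S^1$-coordinate $y(t)$ stays in that one open arc for all time while evolving according to the switching pattern $g_0$, which is constant on maximal blocks of length at least $h$.

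I would then split on the two kinds of arc appearing in Figure~\ref{figure3}. On a \emph{matching} arc, where $\phi_1$ and $\phi_2$ flow the same way around $S^1$, the sign of $\dot y$ is constant for every switching pattern (neither flow has a fixed point in the open arc), so $y(t)$ is strictly monotone; being confined to the arc it converges as $t\to\infty$ to an endpoint $p$, and passing to a subsequential limit in the compact factor $\Delta$ gives a point $(p,g^\ast)\in\omega(x,f)$, so the fixed point $p$ lies in $\pi_{S^1}(\omega(x,f))$ --- a contradiction. On a \emph{conflicting} arc $(p,q)$, where the flows oppose, the figure shows that exactly one endpoint, say $p$, is an attractor of the flow pointing toward it, while the flow pointing toward $q$ has \emph{no} fixed point anywhere on the closed arc; this is where $h$ is chosen. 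Take $h$ larger than (the length of the arc)$/c$, where $c>0$ is a lower bound for the speed of this transverse flow on the closed arc. Then any block of length $\geq h$ on which $g_0$ selects the transverse flow would push $y$ entirely out of the arc, which is impossible; hence $g_0$ selects only the absorbing flow, $y(\cdot)$ is one of its trajectories, and $y(t)\to p$, again placing a fixed point in $\pi_{S^1}(\omega(x,f))$. Choosing $h$ to serve both conflicting arcs simultaneously completes the proof.

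The step I expect to require the most care is extracting from Figure~\ref{figure3} the fact that no conflicting arc is \emph{doubly attracting}, i.e. has both endpoints attractors of their respective flows. Such an arc is precisely the mechanism behind Proposition~\ref{counterexample}: the averaged field $\frac{1}{2}(\phi_1+\phi_2)$ then has an interior stable zero near which the flicker can be trapped, making the conclusion false; the figure avoids this by placing the two attractors in non-adjacent positions on the circle. Relatedly, the clause ``there exists $h$'' is essential, not cosmetic: for small $h$ a fast flicker approximately tracks $\frac{1}{2}(\phi_1+\phi_2)$, which on a conflicting arc may still possess an interior stable zero, so one genuinely needs $h$ large enough that a single switching block of the transverse flow escapes the arc. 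The remaining ingredients --- connectedness and invariance of $\omega(x,f)$, and monotone convergence --- are standard.
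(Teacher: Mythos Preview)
Your proof is correct, and your reading of Figure~\ref{figure3} is accurate: the two conflicting arcs (regions I and III) each have both endpoints fixed by the \emph{same} flow (e.g.\ region I runs from $A_2$ to $R_2$, so $\phi_2$ is the absorbing flow and $\phi_1$ is fixed-point free on the closed arc), so the uniform speed bound you need for the transverse flow is available and the ``doubly attracting'' obstruction you rightly flag does not occur.

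Your route, however, is genuinely different from the paper's. The paper argues directly on the original trajectory $(x,f)$: matching arcs (II and IV) funnel every solution into a conflicting arc; inside a conflicting arc, either $f$ is eventually constant and the solution converges to the corresponding attractor, or a single block of the transverse flow (for $h$ chosen large) ejects the solution into the next matching arc and the process repeats. From this the paper actually classifies the possible limit sets as $\{A_1\}$, $\{A_2\}$, or all of $S^1$. You instead work by contradiction, using connectedness and invariance of $\omega(x,f)$ to trap an entire orbit $(y_0,g_0)$ of the limit set inside a single open arc, and then derive a contradiction arc-by-arc: monotone convergence on matching arcs, and the forced constancy $g_0\equiv a$ on conflicting arcs. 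The shared core is the same choice of $h$; what differs is the object being analyzed (the orbit itself versus an orbit inside the limit set). Your argument is tidier and more portable to other configurations, while the paper's direct argument yields the stronger conclusion that the projected $\omega$-limit set is in fact one of three explicit sets, not merely that it meets $\bigcup_{i,j}\mathcal{M}_i^j$. One small point worth making explicit in your write-up: on a matching arc the monotone limit of $y(t)$ cannot be an interior point, since both vector fields are bounded away from zero on any compact subinterval of the open arc; this is what forces the limit to be an endpoint and hence one of the four Morse points.
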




\begin{proof}
Consider $x\in S^1$ and $f\in\Delta$.  Notice that the flow in region II will always be counterclockwise, for both systems $\phi_1$ and $\phi_2$, and similarly the flow in region IV is always clockwise, for both systems $\phi_1$ and $\phi_2$.  Thus, trajectories in region II will precess toward region I and trajectories in region IV will precess toward region $III$, independent of the reigning function $f\in\Delta$.  Thus, we need only consider if the trajectory $x$ enters regions I or III. \\
\indent Assume there exists a time $\tau_2$ such that the projection of the trajectory of $(x,f)$ onto $S^1$ enters region III, and $f(t)=2$ for all $t>\tau_2$.  Thus, since in system $\phi_2$ solutions in region I converge to to $A_2$, $\pi_{S^1}(\omega(x,f))=A_2$.\\
\indent Similarly, if there exists a time $\tau_1$ such that the projection of the trajectory of $(x,f)$ onto $S^1$ enters region I and $f(t)=1$ for all $t>\tau_1$ then $\pi_{S^1}(\omega(x,f))=A_1$.\\
Now assume that the projection of the trajectory of $(x,f)$ enters region I at a time $t_2$, but that $f(t)\not = 2$ for all $t>t_1$.  Then there exists $s>t_2$ such that $f(s)$=1; that is, the system will eventually switch to the flow given by the system $\phi_1$, for a minimum of one time interval.  Then take $h_2$ large enough such that $\phi_1(A_2, h_2)$ is in region IV (which exists, since $A_2$ is not a fixed point of $\phi_1$ and there is no fixed point of $\phi_1$ between $A_2$ and region IV).  Then, even if $f=1$ for only one time interval, the trajectory of $(x,f)$ will be swept out of region I and into region IV.  Therefore, if any solution stays in region I indefinitely, it must converge to $A_2$.  \\
\indent Similarly, assume that the projection of the trajectory of $(x,f)$ enters region III at a time $t_1$ but that $f(t)\not =1$ for all $t>t_1$.  Thus there exists $s>t_1$ such that $f(s)=2$; that is, the system will eventually switch to the flow given by the system $\phi_2$, for a minimum of one time interval.  Take $h_1$ large enough such that $\phi_1(A_1,h_2)$ is in region II (which exists, since $A_1$ is not a fixed point of $\phi_2$ and there is no fixed point of $\phi_2$ between $A_1$ and region II).  Therefore, if any solution stays in region I indefinitely, it must converge to $A_1$. \\
\indent Take $h=\max (h_1,h_2)$.   In this case, no trajectory can remain in regions II or IV indefinitely, and if a trajectory remains in region I or III indefinitely, it must converge to either $A_2$ or $A_1$ respectively.  Thus, the only options for $\omega$-limit sets of $(x,f)$ are $\{A_1\}, \{A_2\}$ or all of $S^1$.  In every case, $$\pi_{S^1}(\omega(x,f)) \cap (\displaystyle \bigcup_{i,j=1}^2 \mathcal{M}_i^j) \neq \emptyset.$$

\end{proof} 
\subsection{Complete Characterization of Morse Sets of a Hybrid System}

Consider the 1-dimensional system on $[-1,1]$ given by
$$\dot{x} = (-x+1)(x+1)(x-(-1)^n1/2)^2$$
where $n$ takes values from the set $\{1,2\}$.  Let $G$, the directed graph dictating the changes in the values of $n$, be a complete graph on 2 vertices.  Then the dynamical system corresponding to $n=1$ has a repelling fixed point at $x=-1$, a saddle point at $x=-\frac{1}{2}$, and an attracting fixed point at $x=1$.  Meanwhile, the dynamical system corresponding to $n=2$ still has a repelling fixed point at $x=-1$ and an attracting fixed point at $x=1$, but its saddle is located at $x=\frac{1}{2}$.

\begin{prop}\label{thirdexample}

\begin{eqnarray*}
\mathcal{M}_1&=&\{-1\}\times\bar{\Delta}\\
\mathcal{M}_2&=&\{-1/2\}\times\{f \equiv 1\}\\
\mathcal{M}_3&=&\{1/2\}\times\{f \equiv 2\}\\
\mathcal{M}_4&=&\{1\}\times\bar{\Delta}
\end{eqnarray*}
is a Morse Decomposition for this hybrid system.
\end{prop}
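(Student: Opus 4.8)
The plan is to verify directly, for the collection $\{\mathcal{M}_1,\mathcal{M}_2,\mathcal{M}_3,\mathcal{M}_4\}$ on the state space $M\times\bar\Delta$ (with $M=[-1,1]$), the seven conditions of Definition \ref{morsedecomp}. Here we read ``complete graph on two vertices'' as including the two self loops, so that the constant functions $f\equiv 1$ and $f\equiv 2$ lie in $\bar\Delta$ and $\mathcal{M}_2,\mathcal{M}_3$ are well defined. The single observation that drives everything is that both vector fields
$$\dot x=(1-x)(1+x)\bigl(x-(-1)^n\tfrac12\bigr)^2,\qquad n\in\{1,2\},$$
are $\ge 0$ on $[-1,1]$, vanishing only at $\{-1,1\}$ (for both) and at $x=(-1)^n\tfrac12$. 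Hence along every hybrid trajectory the first coordinate $x(t)$ is nondecreasing and bounded, so $x(t)\to x^-$ as $t\to-\infty$ and $x(t)\to x^+$ as $t\to+\infty$, with $x^-\le x^+$. The easy conditions are then immediate: non-voidness is clear; the four sets are pairwise disjoint since their first coordinates $-1,-\tfrac12,\tfrac12,1$ are distinct; $\mathcal{M}_1,\mathcal{M}_4$ are invariant because $\pm1$ are common fixed points and $\psi$ preserves $\bar\Delta$, while $\mathcal{M}_2,\mathcal{M}_3$ are invariant because $f\equiv1$ (resp. $f\equiv2$) is a $\psi$-fixed point and $-\tfrac12$ (resp. $\tfrac12$) is a fixed point of $\phi_1$ (resp. $\phi_2$), the flow governing $M$ while $f$ takes that constant value; and $\{\pm1\}\times\bar\Delta$ are compact since $\bar\Delta$ is compact (being isometric to $\Omega$), whereas $\mathcal{M}_2,\mathcal{M}_3$ are single points.

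The substance is condition 1, that $\omega(x,f),\alpha(x,f)\subseteq\bigcup_i\mathcal{M}_i$ for every $(x,f)$. I would first show $x^+\in\{-1,-\tfrac12,\tfrac12,1\}$: as $x(t)\to x^+$ and the two vector fields are continuous, if both were positive at $x^+$ then $\dot x(t)$ would be bounded below by a positive constant for large $t$, forcing $x(t)\to\infty$; so $x^+$ is a zero of at least one of them. If $x^+\in\{-1,1\}$, the $M$-component of $\omega(x,f)$ is that common fixed point, so $\omega(x,f)\subseteq\mathcal{M}_1$ or $\mathcal{M}_4$ (and $x^+=-1$ in fact forces $x\equiv-1$ by monotonicity). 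If $x^+=-\tfrac12$ (the case $x^+=\tfrac12$ is symmetric), I would argue that $f$ is eventually constant equal to $1$: whenever $f$ equals $2$ on one of its length-$h$ constancy blocks while $x$ lies in a fixed small neighborhood of $-\tfrac12$, the bound $\dot x\ge\tfrac12$ there forces $x$ to increase by a definite amount over that block, which, since $x(t)\uparrow-\tfrac12$, can happen only finitely often; hence $f(t)=1$ for all large $t$, so $\psi_t(f)\to(f\equiv1)$ and $\omega(x,f)=\{(-\tfrac12,f\equiv1)\}=\mathcal{M}_2$. Running the same analysis backward in time gives $\alpha(x,f)\subseteq\mathcal{M}_i$ for a single $i$, whose first coordinate equals $x^-$.

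For isolation I would use the following neighborhoods, all open in $M\times\bar\Delta$. For $\mathcal{M}_1$: $N=[-1,-1+\varepsilon)\times\bar\Delta$ with $0<\varepsilon<\tfrac12$; if $x(0)>-1$ but the trajectory stayed in $N$ for all $t\ge0$, then $x^+\le-1+\varepsilon<-\tfrac12$ forces $x^+=-1$, contradicting $x(t)\ge x(0)>-1$. Symmetrically, $N=(1-\varepsilon,1]\times\bar\Delta$ isolates $\mathcal{M}_4$ using $t\le0$. For $\mathcal{M}_2$: $N=(-\tfrac12-\varepsilon,-\tfrac12+\varepsilon)\times\{g\in\bar\Delta: d(g,f\equiv1)<\tfrac14\}$; if $f\not\equiv1$ then $f$ equals $2$ on some block $[kh,(k+1)h)$, so $\psi_{kh}(f)$ equals $2$ on $[0,h)$ and hence $d(\psi_{kh}(f),f\equiv1)\ge1$, and the trajectory leaves $N$; while if $f\equiv1$ the first coordinate evolves under $\phi_1$, which carries any $x(0)\ne-\tfrac12$ out of $(-\tfrac12-\varepsilon,-\tfrac12+\varepsilon)$ in finite positive or negative time. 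Symmetrically for $\mathcal{M}_3$.

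Finally, no cycles. Let $(x,f)\notin\bigcup_i\mathcal{M}_i$ with $\alpha(x,f)\subseteq\mathcal{M}_{j-1}$ and $\omega(x,f)\subseteq\mathcal{M}_j$. If $x(t)$ were constant it would equal a common fixed point $\pm1$, or equal $\pm\tfrac12$ with $f$ the corresponding constant function, putting $(x,f)$ into $\bigcup_i\mathcal{M}_i$; hence $x^-<x^+$. By the discussion of condition 1 the first coordinates of $\mathcal{M}_{j-1}$ and $\mathcal{M}_j$ are $x^-$ and $x^+$, so every connecting orbit strictly increases the first coordinate; since the first coordinates of $\mathcal{M}_1,\dots,\mathcal{M}_4$ are $-1<-\tfrac12<\tfrac12<1$, no chain of such orbits can close up, so there are no cycles, and the verification is complete. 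The delicate point — the one I would spend the most care on — is the claim in condition 1 that when $x^+=\pm\tfrac12$ the switching function $f$ is eventually constant; this genuinely uses both the lower bound $h$ on the lengths of constancy blocks of functions in $\bar\Delta$ and a quantitative estimate on the relevant vector field near the saddle.
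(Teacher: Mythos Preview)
Your proof is correct and in fact more streamlined than the paper's. The central observation you exploit---that both vector fields are nonnegative on $[-1,1]$ because the last factor $(x-(-1)^n\tfrac12)^2$ is a perfect square---makes $x(t)$ monotone along every hybrid trajectory, and this single fact organizes the entire verification. The paper never isolates this observation; instead it establishes containment of $\alpha/\omega$-limit sets by an exhaustive case analysis on the eventual behavior of $f$ (eventually constant at $1$, at $2$, or neither) and on the position of $x$ relative to the saddles, and it handles isolation somewhat circularly by taking a $1/4$-ball neighborhood and invoking the limit-set and no-cycle parts of the same proof. Your route gives cleaner arguments for isolation (explicit product neighborhoods together with direct escape arguments in forward or backward time) and for the no-cycle condition (any connecting orbit strictly increases the first coordinate, and the four first coordinates are totally ordered). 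The one genuinely delicate point---that $x^+=\pm\tfrac12$ forces $f$ to be eventually constant---is the same in spirit in both proofs, but your quantitative version (each length-$h$ block with the ``wrong'' value of $f$ contributes a definite increment to $x$, hence only finitely many can occur) makes the role of the step size $h$ explicit, whereas the paper leaves this implicit in its case discussion.
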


\begin{proof}  We check the seven conditions in turn.
\begin{enumerate}
\item\textbf{Non-void}  Clearly, each Morse set described above is non-empty.

\item\textbf{Pairwise disjoint} Since the projections $\pi_M(\mathcal{M}_i)$ of the Morse sets onto $[-1,1]$ are pairwise disjoint, the Morse sets are pairwise disjoint.

\item\textbf{Invariant} Consider a point $(x,f)$ in $\mathcal{M}_1=\{-1\}\times\bar{\Delta}$.  Since the point $x = -1$ is invariant for both of the separate deterministic dynamical systems on $[-1,1]$, no matter what values $f$ takes, any trajectory will never leave $x = -1$.  Thus, since any translation of $f$ will always be in $\bar{\Delta}$, the trajectory of $(x,f)$ will always remain in $\mathcal{M}_1$ both forwards and backwards in time, and thus $\mathcal{M}_1$ is invariant.  By a similar argument, $\mathcal{M}_4$ is invariant.  \\
\indent Now consider the point $(-1/2, f)$, where $f \equiv 1$.   Since $f$ takes the value of $n=1$ for all time, this trajectory is equivalent to the trajectory that starts at $x=-1/2$ in the system given by $$\dot{x} = (-x+1)(x+1)(x+1/2)^2.$$
Since $x = -1/2$ is invariant in this system, the trajectory in the pair system will never leave $x = -1/2$.  Also, any shift of the function $f \equiv 1$ is still $f \equiv 1$, so $f$ is invariant in $\bar{\Delta}$.  Thus, $(-1/2, f)$ is invariant, so $\mathcal{M}_2$ is invariant.  By a similar argument for the system $$\dot{x} = (-x+1)(x+1)(x-1/2)^2,$$ $\mathcal{M}_3$ is invariant.

\item\textbf{Isolated}  A neighborhood of $\mathcal{M}_i$ given by $$N(\mathcal{M}_i)=\displaystyle\bigcup_{(x,f)\in\mathcal{M}_i}B((x,f),1/4)$$ is an open region containing $\mathcal{M}_i$ that does not intersect any other Morse set.  Consider a point $(x,f)\in N(\mathcal{M}_i)$ such that $\alpha(x,f)\subseteq N(\mathcal{M}_i)$ and $\omega(x,f)\subseteq N(\mathcal{M}_i)$.  Since by part 6 below every $\alpha,\omega$-limit set is contained in some Morse set, and since $\mathcal{M}_i$ is the only Morse set intersecting $N(\mathcal{M}_i)$, we have that $\alpha(x,f)\subseteq \mathcal{M}_i$ and $\omega(x,f)\subseteq \mathcal{M}_i$.  By part 7 below there are no cycles, so $(x,f)\in\mathcal{M}_i$.

\item\textbf{Compact} The sets $\{-1\},\{-1/2\},\{1/2\},\{1\}$ are compact in $M=[-1,1]$, and the sets $\bar{\Delta},\{f \equiv 1\},\{f \equiv 2\}$ are compact in $\bar{\Delta}$.  By Tychonoff's theorem, since each of the Morse sets $\mathcal{M}_i$ is a product of two of these sets, each is compact in the product space $M\times\bar{\Delta}$.

\item\textbf{Contains $\alpha$/$\omega$-limit sets}  By invariance of each Morse set, $(y,f)\in\mathcal{M}_i$ for some $i$ implies $\alpha(y,f)\subseteq\mathcal{M}_i$ and $\omega(y,f)\subseteq\mathcal{M}_i$.  Take a point $(y,f)$ with $y\in(-1,-1/2)$ so that there exists some $z\in\mathbb{R}$ such that $f(t)=1$ for all $t\geq z$.  Backwards in time, no matter what value $f$ takes, any trajectory moving in $(-1,-1/2)$ will converge towards $x=-1$, so $\alpha(y,z)\subseteq\mathcal{M}_1$.  Forwards in time, the function $f$ will converge to the function $g \equiv 1$.  So, suppose $f$ becomes constant before the projection of the trajectory into $[-1,1]$ has passed the point $x = -1/2$.  Since all trajectories starting in $(-1,1/2)$ will converge to the fixed point at $x = -1/2$ when $n=1$, the projection of the trajectory into $[-1,1]$ will converge to $x=-1/2$ in forwards time.  Thus, the trajectory in $M\times\bar{\Delta}$ will converge forwards to $\mathcal{M}_2$.  Instead, suppose $f$ becomes constant after the projection of the trajectory into $[-1,1]$ has passed the point $-1/2$.  Since all trajectories starting in $(-1/2,1)$ will converge to $x=-1/2$ when $n=1$, the projection of the trajectory into $[-1,1]$ will converge to $x=1$ in forwards time.  Thus, the trajectory in $M\times\bar{\Delta}$ will converge forwards to $\mathcal{M}_4$.  \\

Take a point $(y,f)$ where $y\in(1/2,1)$ and there exists some $z\in\mathbb{R}$ such that $f(x)=2$ for all $x\leq z$. Since, forwards in time, no matter what value $f$ takes, any trajectory moving in $(1/2,1)$ will converge towards $1$, $\omega(y,z)\subseteq\mathcal{M}_4$.  Backwards in time, clearly the function $f$ will converge to the function $g=2$.  Meanwhile, if $f$ becomes constant after the projection of the trajectory into $[-1,1]$ has passed the point $1/2$, since all trajectories starting in $(1/2,1)$ will converge backwards in time to the fixed point at $1/2$ when $n=2$, the projection of the trajectory into $[-1,1]$ will converge to the point $1/2$ in backwards time, and thus the trajectory in $[1,1]\times\bar{\Delta}$ will converge backwards to $\mathcal{M}_3$.  If $f$ becomes constant before the projection of the trajectory into $[-1,1]$ has passed the point $1/2$, since all trajectories starting in $(-1/2,1)$ will converge to the fixed point at $-1/2$ when $n=1$, the projection of the trajectory into $[-1,1]$ will converge to the point $-1/2$ in backwards time, and thus the trajectory in $[1,1]\times\bar{\Delta}$ will converge forwards to $\mathcal{M}_2$. \\

\indent Now consider a trajectory starting at $(y,f)$ where $y\in(-1,1)$ and there does not exist a $z$ such that $f(t)=1$ or $f(t)=2$ for all $t\geq z$ or $t\leq z$.  Then both forwards and backwards in time, the projection of this point's trajectory into $[-1,1]$ will pass the fixed points at $-1/2$ and at $1/2$, and thus will converge backwards to $\mathcal{M}_1$ and forwards to $\mathcal{M}_4$. 

Consider a trajectory starting at $(y,f)$, where $y\in(-1/2,1/2)$ where there exist $z_1$ and $z_2$, $z_1<z_2$ where $f(t)=1$ for all $t\leq z_1$ and $f(t)=2$ for all $t\geq z_2$. If $f$ becomes constant at $n = 1$ while the trajectory is in $(-1/2,1/2)$, then backwards in time the trajectory will converge towards $-1/2$ in $[-1,1]$, and the function $f$ will converge backwards to $g_1 \equiv 1$.  Thus $\alpha(y,f)=\mathcal{M}_2$.  However, if $f$ becomes constant before the trajectory has entered $(-1/2,1/2)$, then it will converge backwards in time to $\mathcal{M}_1$.  Meanwhile, forwards in time, if $f$ becomes constant at $n = 2$ while the trajectory is in $(-1/2,1/2)$, then forwards in time the trajectory will converge towards $1/2$ in $[-1,1]$, and the function $f$ will converge to $g_2 \equiv 2$.  Thus $\omega(y,f)=\mathcal{M}_3$.  However, if $f$ becomes constant at 2 after the trajectory has exited $(-1/2,1/2)$, then forwards in time the trajectory will converge to $\mathcal{M}_4$.  Thus, all $\alpha,\omega$-limit sets are contained in some $\mathcal{M}_i$.

\item\textbf{No Cycles} Note from the description above that given $\mathcal{M}_i$ and $\mathcal{M}_j$ where $i \neq j$ there do not exist points $(x,f)$ and $(y,g)$ in $M \times \bar{\Delta}$ such that $\alpha(x,f)\subseteq\mathcal{M}_i$ and $\omega(x,f)\subseteq\mathcal{M}_j$, and $\alpha(y,g)\subseteq\mathcal{M}_j$ and $\omega(y,g)\subseteq\mathcal{M}_j$.  Since the possibilities mentioned in part 6 are exhaustive, there are no cycles.

\end{enumerate}

\end{proof}

This example is important because $\pi_{\bar{\Delta}}(\mathcal{M}_{2,3}) \neq \bar{\Delta}$.  This implies that the projection of Morse sets on $M \times \bar{\Delta}$ onto $\bar{\Delta}$ do not need to be Morse sets on $\bar{\Delta}$.


\section{Conclusion}

We have seen that under the correct formulation, a hybrid system can be treated as a dynamical system on a compact space.  We have studied the Morse sets on $\bar{\Delta}$, showing that the lifts of invariant communicating classes form a finest Morse decomposition.  We have studied Morse decompositions of hybrid systems, concentrating on attracting and repelling Morse sets.  In the case that the random perturbations are small, we have seen that a hybrid system can be expected to behave similarly to the unperturbed dynamical system.  Finally, we have examined three examples of hybrid systems, which show that the limit sets of a hybrid dynamical system can be complicated objects, possibly with little relation to the limit sets of the individual dynamical systems comprising the hybrid system.  Future research could further illuminate the characteristics of the limit sets and Morse sets of these systems.


\section{Acknowledgments}

We wish to recognize Chad Vidden for his helpful discussions and Professor Wolfgang Kliemann for his instruction and guidance.  We would like to thank the Department of Mathematics at Iowa State University for their hospitality during the completion of this work.  In addition, we'd like to thank Iowa State University, Alliance, and the National Science Foundation for their support of this research.


\end{document}